\DeclareMathAlphabet{\mathpzc}{OT1}{pzc}{m}{it}
\newcommand{\bH}{\boldsymbol{H}}
\newcommand{\bE}{\boldsymbol{E}}
\newcommand{\bp}{\boldsymbol{p}}
\newcommand{\bv}{\boldsymbol{v}}
\newcommand{\bn}{\boldsymbol{n}}
\newcommand{\bw}{\boldsymbol{w}}
\newcommand{\by}{\boldsymbol{y}}
\newcommand{\bz}{\boldsymbol{z}}
\newcommand{\bbf}{\boldsymbol{f}}
\newcommand{\R}{\mathbb{R}}
\newcommand{\rW}{\mathrm W}
\newcommand{\rL}{\mathrm L}
\newcommand{\brL}{\boldsymbol{\rL}}
\newcommand{\T}{\mathscr{T}}
\newcommand{\bsiep}{\varepsilon_{\sigma}}
\newcommand{\calJ}{\mathcal{J}}
\newcommand{\jump}[1]{\llbracket #1 \rrbracket}
\newcommand{\cu}{\mathop{\mathbf{curl}}\nolimits}
\newcommand{\di}{\mathop{\mathrm{div}}\nolimits}
\newcommand{\TheTitle}{Error estimates for a bilinear optimal control problem of Maxwell's equations}
\newcommand{\ShortTitle}{Estimates for a control problem of Maxwell's equations}
\newcommand{\TheAuthors}{F. Fuica, F. Lepe, P. Venegas}
\headers{\ShortTitle}{\TheAuthors}
\title{{\TheTitle}\thanks{FF is supported by ANID through FONDECYT postdoctoral project 3230126. 
FL was partially supported by ANID through FONDECYT Project 11200529. 
PV is partially supported by ANID through FONDECYT Project 1211030 and Centro de Modelamiento
Matemático (CMM), grant FB210005, BASAL funds for centers of excellence.}}
\author{Francisco Fuica\thanks{Facultad de Matem\'aticas, Pontificia Universidad Cat\'olica de Chile, Avenida Vicu\~{n}a Mackenna 4860, Santiago, Chile.
(\email{francisco.fuica@uc.cl}).}
\and
Felipe Lepe\thanks{GIMNAP, Universidad del B\'io B\'io, Concepci\'on, Chile. (\email{flepe@ubiobio.cl})}
\and
Pablo Venegas\thanks{GIMNAP, Universidad del B\'io B\'io, Concepci\'on, Chile. (\email{pvenegas@ubiobio.cl})}
}
\date{Draft version of \today.}
\begin{document}

\maketitle

\begin{abstract}
We consider a control-constrained optimal control problem subject to time-harmonic Maxwell's equations; the control variable belongs to a finite-dimensional set and enters the state equation as a coefficient.  
We derive existence of optimal solutions, and analyze first- and second-order optimality conditions. 
We devise an approximation scheme based on the lowest order N\'ed\'elec finite elements to approximate optimal solutions. 
We analyze convergence properties of the proposed scheme and prove a priori error estimates. 
We also design an a posteriori error estimator that can be decomposed as the sum two contributions related to the discretization of the state and adjoint equations, and prove that the devised error estimator is reliable and locally efficient.
We perform numerical tests in order to assess the performance of the devised discretization strategy and the a posteriori error estimator.
\end{abstract}

\begin{keywords}
optimal control, time-harmonic Maxwell's equations, first- and second-order optimality conditions, finite elements, convergence, error estimates.
\end{keywords}

\begin{AMS}
35Q60,         
49J20,   	   
49K20,         
49M25,		   
65N15,         
65N30.         
\end{AMS}

\section{Introduction}
In this work we focus our study on existence of solutions, optimality conditions, and a priori and a posteriori error estimates for an optimal control problem that involves time-harmonic Maxwell's equations  as state equation and a finite dimensional control space. More precisely, let $\Omega \subset \R^3$ be an open, bounded, and simply connected polyhedral domain with Lipschitz boundary $\Gamma$. Given a control cost $\alpha > 0$, desired states $\by_\Omega \in  \brL^2(\Omega;\mathbb{C})$ and $\bE_{\Omega}\in \brL^2(\Omega;\mathbb{C})$, and $\ell\in \mathbb{N}$, we define the cost functional
\begin{align}\label{eq:cost_function}
\calJ(\by,\mathbf{u}):=\dfrac{1}{2}\|\by-\by_\Omega\|^2_{\brL^2(\Omega;\mathbb{C})} + \dfrac{1}{2}\|\cu \by -\bE_\Omega\|^2_{\brL^2(\Omega;\mathbb{C})}
+\dfrac{\alpha}{2}\|\mathbf{u}\|_{\mathbb{R}^{\ell}}^2.
\end{align}
Let $\bbf \in \brL^2(\Omega;\mathbb{C})$ be an externally imposed source term, let $\mu\in \rL^\infty(\Omega)$ be a function satisfying $\mu \geq \mu_0 > 0$ with $\mu_0\in \mathbb{R}^{+}$, and let $\omega>0$ be a constant representing the angular frequency. 
Given a function $\bsiep\in \rL^{\infty}(\Omega;\mathbb{C})$, we will be concerned with the following optimal control problem: Find $\min \calJ(\by,\mathbf{u})$ subject to 
\begin{align}\label{eq:strong_state_eq}
\cu \mu^{-1}\cu \by  - \omega^{2}(\bsiep\cdot \mathbf{u})\by= \bbf \quad \mbox{in } \Omega, \qquad
\by\times \bn= \mathbf{0} \quad \mbox{on } \Gamma,
\end{align}
and the control constraints 
\begin{align}\label{def:box_constraints}
\mathbf{u}=(\mathbf{u}_{1},\ldots,\mathbf{u}_{\ell})\in U_{ad}, \qquad U_{ad}:=\left\{\mathbf{v} \in \mathbb{R}^{\ell}: \mathbf{a}\leq \mathbf{v}\leq \mathbf{b}\right\}.
\end{align}
Here, the control bounds $\mathbf{a},\mathbf{b}\in \mathbb{R}^{\ell}$ are such that $\mathbf{0} < \mathbf{a} < \mathbf{b}$. 
We immediately point out that, throughout this work, vector inequalities must be understood componentwise. In \eqref{eq:strong_state_eq}, $\boldsymbol{n}$ denotes the outward unit normal. 
In an abuse of notation, we use $\bsiep\cdot \mathbf{u}$ to denote $\sum_{k=1}^{\ell}\bsiep|_{\Omega_{k}}^{}\mathbf{u}_{k}$, where $\{\Omega_{k}\}_{k=1}^{\ell}$ is a given partition of $\Omega$ (see section \ref{sec:partition_fields}). Further details on $\bsiep$ will be deferred until section \ref{sec:model_problem}.


Time-harmonic Maxwell's equations are given by the system of first-order partial differential equations:
\begin{align}\label{eq:strong_Maxwell}
\cu \by - i\omega\mu \boldsymbol{h} \! = \!\boldsymbol{0}, \quad \cu\boldsymbol{h}+i\omega \varepsilon \by \! = \! \boldsymbol{j}, \quad
\text{div}(\varepsilon\by) \! = \! \rho, ~\text{and}~~
\text{div}(\mu \boldsymbol{h}) \! = \! 0, ~  \text{in }\Omega, \hspace{-0.3cm}
\end{align}
where $\by$ is the electric field, $\boldsymbol{h}$ is the magnetic field, $\varepsilon$ is the real-valued electrical permittivity of the material, $\mu$ is the real-valued magnetic permeability, and the source terms $\boldsymbol{j}$ and $\rho$ are the current density and the charge
density, respectively, which are related by the charge conservation equation $-i\omega\rho + \text{div}\,\boldsymbol{j} = 0$. We assume that $\boldsymbol{j} = \hat{\boldsymbol{j}} + \sigma\by$, where $\hat{\boldsymbol{j}}$ is an externally imposed current and the real-valued coefficient $\sigma$ is the conductivity. In addition, we assume that the medium $\Omega$ is surrounded by a perfect conductor, so that we have the boundary condition $\by \times \boldsymbol{n} = 0$ on $\partial\Omega$. In particular, for a detailed derivation of problem \eqref{eq:strong_state_eq} from \eqref{eq:strong_Maxwell}, we refer the reader to \cite[section 2]{CiarletJr2020}; see also \cite[section 8.3.2]{MR3793186}. We notice that, for simplicity, we have considered $\bbf = i\omega\hat{\boldsymbol{j}}$.


Optimal control problems subject to Maxwell's and eddy current equations have been widely studied over the last decades, due to their strong relationship with physics and engineering.
We refer the interested reader to the following non-comprehensive list of references concering numerical methods for their approximation, namely, a priori and a posteriori error estimates: \cite{MR2661664, MR2891467, MR2870896, MR3105785, MR3259029, MR3460108, MR3515109, MR3745168, MR3679913, MR3723330, MR4090837, MR4518560, 2022arXiv220915129A}. 
In all these references, the control enters the state equation as a source term. 
When the control enters the state equation as coefficient, as in \eqref{eq:strong_state_eq}, the analysis becomes more challenging due to the \emph{nonlinear} coupling between the state and control variables; this coupling has led to this type of problems being referred to as \emph{bilinear optimal control problems}. 
The aforementioned coupling complicates both the analysis and discretization, since the state variable depends nonlinearly on the control and, consequently, the uniqueness of solutions of \eqref{eq:cost_function}--\eqref{def:box_constraints} cannot be guaranteed. 
Hence, a proper optimization study requires the analysis of second-order optimality conditions. 


Regarding bilinear optimal control problems subject to Maxwell's and eddy current equations, we mention \cite{MR2957021,MR3377426,MR4054226}. 
In \cite{MR2957021}, the author studied an optimal control problem governed by the time-harmonic eddy current equations, where the controls (scalar functions) entered as a coefficient in the state equation. 
After analyzing regularity results, existence of optimal controls, and first-order optimality conditions, the author proposed a discretization strategy and prove, assuming that the optimal controls belongs to $\mathrm{W}^{1,\infty}(\Omega)$, convergence results of such finite element discretization without a rate; second-order optimality conditions were not provided.
 Similarly, in \cite{MR3377426}, the author introduced  an optimal control approach based on grad-div regularization and divergence penalization for the problem previously studied in \cite{MR2957021}.
 However, due to the lack of regularity of controls, no discretization analysis was given.
  In \cite{MR4054226}, the authors studied an optimal control problem with controls as coefficients of time-harmonic Maxwell’s equations, with applications to invisibility cloak design. 
  The controls represented the permittivity and permeability of the metamaterial. 
  After presenting first-order optimality conditions using the Lagrange multiplier methodology, the authors solve the state equation with the discontinuous Galerkin method and presented numerical tests to demonstrate the effectiveness of the proposed method.


In contrast to \cite{MR2957021,MR3377426}, besides considering Maxwell's equations instead of eddy current equations, in our work the control corresponds to a vector acting on both the electrical permittivity and conductivity of the material $\Omega$, in a given partition. 
This implies that conductivity may change in different regions of $\Omega$. 
This is a plausible consideration on the conductivity in applications, since some devises that conduct electricity are designed with different materials and hence, with different conductivity properties.  
In this manuscript, we provide existence of optimal solutions and necessary and sufficient optimality conditions. 
Then, we propose an approximation scheme based on N\'ed\'elec finite elements and present a priori error estimates for the state equations which, in turn, allow us to prove that continuous strict local solutions of the control problem can be approximated by local minima of suitable discrete problems. 
Moreover, under appropriate assumptions on the adjoint equation (see assumptions \eqref{eq:assumption_p-ph} and \eqref{eq:assumption_p-ph_curl}), we provide a priori error estimates and convergence rates between continuous and discrete optimal solutions. 
The aforementioned assumptions, which follow from the reduced regularity properties of the adjoint variable, motivate the development and analysis of adaptive finite element methods \cite{MR1885308,MR3059294} for the proposed control problem. With this in mind, we propose a residual-type a posteriori error estimator for the control problem and prove its reliability and local efficiency; the error estimator is built as the sum two contributions related to the discretization of the state and adjoint equations.
 Moreover, it can be used to drive adaptive procedures and is capable to attain optimal order of convergence for the approximation error by refining in the regions where singularities may appear. 
Finally, we mention that our problem also can be seen as an identification parameter problem for Maxwell's equations. On this matter, we refer the reader to \cite{MR4109590} and the recent article \cite{MR4724154}.


We organize our manuscript as follows. Section \ref{sec:preliminaries} is devoted to set notation and basic definitions that we will use throughout our work.
In section \ref{sec:review_bilinear}, basic results for the state equation as well as a priori and posteriori error estimates are reviewed. 
The core of our paper begins in section \ref{sec:the_ocp}, where the analysis of the optimal control problem is performed. 
To make matters precise, in this section we prove existence of optimal solutions for the considered problem and study first- and second-order optimality conditions. 
In section \ref{sec:fem_for_ocp} a suitable finite element discretization of the optimal control problem is proposed and its corresponding convergence properties are proved.
Moreover, we propose an a posteriori error estimator for the designed finite element scheme and show reliability and local efficiency properties. 
We end our exposition with a series of numerical tests reported in section \ref{sec:num_ex}.


\section{Notation and preliminaries}
\label{sec:preliminaries}


\subsection{Notation}

Throughout the present manuscript, we use standard notation for Lebesgue and Sobolev spaces and their norms.
We use uppercase bold letters to denote the vector-valued counterparts of the aforementioned spaces whereas lowercase bold letters are used to denote vector-valued functions. In particular, we define 
\begin{align*}
\mathbf{H}(\textnormal{div},\Omega) &:= \left\{ \bw \in \brL^2(\Omega;\mathbb{C}): \text{div } \bw \in \rL^2(\Omega;\mathbb{C})\right\}, \\
\mathbf{H}(\cu,\Omega)&:= \left\{ \bw \in \brL^2(\Omega;\mathbb{C}): \cu  \bw \in \brL^2(\Omega;\mathbb{C})\right\}, 
\end{align*}
and $\mathbf{H}_{0}(\cu,\Omega):= \left\{ \bw \in \mathbf{H}(\cu,\Omega): \bw \times \bn = \mathbf{0} \right\}$. In addition, given $s \geq 0$, we introduce the space $\mathbf{H}^{s}(\cu,\Omega):= \left\{ \bw \in \mathbf{H}^{s}(\Omega;\mathbb{C}): \cu  \bw \in \mathbf{H}^{s}(\Omega;\mathbb{C})\right\}$. 

If $\mathcal{X}$ is a normed vector space, we denote by $\mathcal{X}'$ and $\|\cdot\|_{\mathcal{X}}$ the dual and the norm of $\mathcal{X}$, respectively. 
We denote by $\langle \cdot,\cdot \rangle_{\mathcal{X}',\mathcal{X}}$ the duality pairing between $\mathcal{X}'$ and $\mathcal{X}$. 
When the spaces $\mathcal{X}'$ and $\mathcal{X}$ are clear from the context, we simply denote the duality pairing $\langle \cdot,\cdot \rangle_{\mathcal{X}',\mathcal{X}}$ by $\langle \cdot,\cdot \rangle$. 
For the particular case $\mathcal{X}=\brL^2(G;\mathbb{C})$, with $G\subset \mathbb{R}^{3}$ a bounded domain, we shall denote its inner product and norm by $(\cdot,\cdot)_{G}$ and $\|\cdot\|_{G}$, respectively. Given a complex function $\bw$, we denote by $\overline{\bw}$ its complex conjugate.

The relation $\mathfrak{a} \lesssim \mathfrak{b}$ indicates that $\mathfrak{a} \leq C \mathfrak{b}$, with a constant $C > 0$ that does not depend on either $\mathfrak{a}$, $\mathfrak{b}$, or discretization parameters. The value of the constant $C$ might change at each occurrence.


\subsection{Piecewise smooth fields}\label{sec:partition_fields}

Let $\ell\in\mathbb{N}$. The set $\mathcal{P}:=\{\Omega_{k}\}_{k=1}^{\ell}$ is called a \emph{partition} of $\Omega$ if any two elements do not intersect and $\overline{\Omega} = \cup_{k=1}^{\ell}\overline{\Omega}_{k}$.
The corresponding interface is defined by $\Sigma:=\cup_{1\leq k \neq k'\leq \ell} (\Gamma_{k}\cap \Gamma_{k'})$, where $\Gamma_{k}$ and $\Gamma_{k'}$ denote the boundaries of $\Omega_{k}$ and $\Omega_{k'}$, respectively. With this partition at hand, we define 
\begin{align*}\label{def:piecewise_W100}
P\rW^{1,\infty}(\Omega):= \{\zeta\in \rL^{\infty}(\Omega;\mathbb{C}) : \zeta|^{}_{\Omega_{k}} \in \rW^{1,\infty}(\Omega_{k};\mathbb{C}), ~ 1\leq k \leq \ell\}.
\end{align*}


\section{The state equation}
\label{sec:review_bilinear}

In this section, we review well-posedness results for \eqref{eq:strong_state_eq} and further regularity properties for its solution. 
Additionally, we present a priori and a posteriori error estimates for a specific finite element setting. 


\subsection{The model problem}\label{sec:model_problem}

Let $\mathbf{f}\in \mathbf{H}_{0}(\cu,\Omega)'$ be a given forcing term, let $\mu\in \rL^\infty(\Omega)$ be such that $\mu \geq \mu_0 > 0$ with $\mu_0\in \mathbb{R}^{+}$, let $\mathfrak{u}\in U_{ad}$, and let $\omega\in \mathbb{R}^{+}$. 
We introduce the electric permittivity $\varepsilon\in \rL^{\infty}(\Omega)$ and the conductivity $\sigma\in \rL^{\infty}(\Omega)$ of the material $\Omega$, and assume that there exist $\varepsilon_{+},\varepsilon^{+}\in\mathbb{R}^{+}$ and $\sigma_{+},\sigma^{+}\in\mathbb{R}^{+}$ such that
\begin{align*}
\varepsilon_{+} \leq \varepsilon \leq \varepsilon^{+} \quad 
\text{and} \quad
\sigma_{+} \leq \sigma \leq \sigma^{+}.
\end{align*}
We define $\bsiep:= \varepsilon + i\sigma\omega^{-1}$ and consider the problem: Find $\mathbf{y} \in \mathbf{H}_{0}(\cu,\Omega)$ such that
\begin{equation}\label{eq:weak_eq}
(\mu^{-1} \cu \mathbf{y}, \cu \bw)_{\Omega} - \omega^{2}((\bsiep\cdot \mathfrak{u})\mathbf{y},\bw)_{\Omega} = \langle \mathbf{f},\bw \rangle
\quad \forall \bw \in \mathbf{H}_{0}(\cu,\Omega).
\end{equation}
We recall that $\bsiep\cdot \mathbf{u}$ denotes $\sum_{k=1}^{\ell}\bsiep|_{\Omega_{k}}^{}\mathbf{u}_{k}$, where $\mathcal{P}=\{\Omega_{k}\}_{k=1}^{\ell}$ is a given partition of $\Omega$; see section \ref{sec:partition_fields}.
This problem is well posed \cite[Theorem 8.3.5]{MR3793186}. In particular, we have the stability bound $\|\mathbf{y}\|_{\mathbf{H}(\cu,\Omega)}\lesssim \|\mathbf{f}\|_{\mathbf{H}_{0}(\cu,\Omega)'}$. 

The next result states further regularity properties for the solution of \eqref{eq:weak_eq}.

\begin{theorem}[extra regularity]\label{thm:extra_reg_Maxwell}
Let $\mathbf{y} \in \mathbf{H}_{0}(\cu,\Omega)$ be the unique solution to problem \eqref{eq:weak_eq}. Then,

$\mathrm{(i)}$ if $\mathbf{f}\in \mathbf{H}(\textnormal{div},\Omega)$ and $\bsiep,\mu\in P\rW^{1,\infty}(\Omega)$, there exists $\mathfrak{t}\in(0,\tfrac{1}{2})$ such that $\mathbf{y}\in \mathbf{H}^{s}(\cu,\Omega)$ for all $s\in[0, \mathfrak{t})$,

$\mathrm{(ii)}$ if $\mathbf{f}\in \mathbf{H}(\textnormal{div},\Omega)$ and $\bsiep,\mu\in \rW^{1,\infty}(\Omega)$, there exists $\epsilon > 0$ such that $\mathbf{y} \in \mathbf{H}_{0}(\cu,\Omega)\cap \mathbf{H}^{\frac{1}{2}+\epsilon}(\Omega;\mathbb{C})$. If, in addition, $\Omega$ is convex, we have that $\epsilon=\frac{1}{2}$.
\end{theorem}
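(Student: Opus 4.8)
The plan is to convert the variational identity \eqref{eq:weak_eq} into pointwise differential constraints on $\mathbf{y}$ and on $\cu\mathbf{y}$, and then to feed these into embeddings of curl/divergence spaces into fractional Sobolev spaces. Set $\beta:=\bsiep\cdot\mathfrak{u}$. Testing \eqref{eq:weak_eq} against compactly supported smooth fields, I first recover the distributional equation $\cu(\mu^{-1}\cu\mathbf{y})=\mathbf{f}+\omega^{2}\beta\mathbf{y}$, whose right-hand side lies in $\brL^2(\Omega;\mathbb{C})$ because $\mathbf{f}\in\mathbf{H}(\textnormal{div},\Omega)\subset\brL^2(\Omega;\mathbb{C})$ and $\mathbf{y}\in\brL^2(\Omega;\mathbb{C})$. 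Applying the divergence and using $\di\cu\equiv0$ yields the crucial constraint $\di(\beta\mathbf{y})=-\omega^{-2}\di\mathbf{f}\in\rL^2(\Omega;\mathbb{C})$; since $\mathrm{Re}\,\beta$ is bounded below by a positive constant, $\beta$ is boundedly invertible.

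For part (ii), with $\beta,\mu\in\rW^{1,\infty}(\Omega)$, the product rule $\di(\beta\mathbf{y})=\beta\,\di\mathbf{y}+\nabla\beta\cdot\mathbf{y}$ together with the boundedness of $\nabla\beta$ and $\mathbf{y}\in\brL^2(\Omega;\mathbb{C})$ gives $\di\mathbf{y}\in\rL^2(\Omega;\mathbb{C})$. Hence $\mathbf{y}\in\mathbf{H}_0(\cu,\Omega)\cap\mathbf{H}(\textnormal{div},\Omega)$, and I invoke the classical embedding of this space into $\mathbf{H}^{1/2+\epsilon}(\Omega;\mathbb{C})$ valid on Lipschitz polyhedra, which sharpens to $\mathbf{H}^1(\Omega;\mathbb{C})$ (so $\epsilon=\tfrac12$) when $\Omega$ is convex.

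For part (i), with only piecewise $\rW^{1,\infty}$ coefficients, $\nabla\beta$ carries a distributional jump across the interface $\Sigma$, so $\di(\beta\mathbf{y})\in\rL^2$ controls $\di\mathbf{y}$ only inside each $\Omega_k$. Thus $\mathbf{y}$ lies in the $\beta$-weighted space $\mathbf{H}_0(\cu,\Omega)\cap\{\mathbf{w}:\di(\beta\mathbf{w})\in\rL^2(\Omega;\mathbb{C})\}$, whose optimal regularity for piecewise smooth $\beta$ is capped strictly below $\tfrac12$ by the transmission geometry; this produces the threshold $\mathfrak{t}\in(0,\tfrac12)$ with $\mathbf{y}\in\mathbf{H}^s(\Omega;\mathbb{C})$ for $s<\mathfrak{t}$. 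To upgrade this to $\mathbf{y}\in\mathbf{H}^s(\cu,\Omega)$ I would treat $\mathbf{z}:=\mu^{-1}\cu\mathbf{y}$ symmetrically: $\cu\mathbf{z}=\mathbf{f}+\omega^{2}\beta\mathbf{y}\in\brL^2(\Omega;\mathbb{C})$, $\di(\mu\mathbf{z})=\di\cu\mathbf{y}=0$, and $\mathbf{z}\cdot\bn=0$ on $\Gamma$ (because $\mathbf{y}\times\bn=\mathbf{0}$ forces the tangential trace of $\mathbf{y}$, and hence $(\cu\mathbf{y})\cdot\bn$, to vanish). The $\mu$-weighted normal-trace analogue of the embedding then places $\mathbf{z}\in\mathbf{H}^s$ for $s<\tfrac12$; since multiplication by a piecewise $\rW^{1,\infty}$ function preserves $\mathbf{H}^s$ precisely when $s<\tfrac12$ (fractional Sobolev functions below this threshold possess no interface trace, so coefficient jumps are harmless), I recover $\cu\mathbf{y}=\mu\mathbf{z}\in\mathbf{H}^s(\Omega;\mathbb{C})$, completing the claim after taking $\mathfrak{t}$ to be the smaller of the two thresholds.

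The main obstacle is the sharp interface regularity underlying part (i): unlike the homogeneous embeddings of part (ii), bounding the weighted curl--divergence spaces by $\mathbf{H}^s$ with an explicit exponent $\mathfrak{t}<\tfrac12$ requires transmission-problem regularity theory for piecewise smooth coefficients, and one must verify that the multiplier step and the tangential/normal boundary identities remain consistent with the reduced range $s<\tfrac12$.
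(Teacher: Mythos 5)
Your proposal takes essentially the same route as the paper: for (ii) you derive $\di \mathbf{y}\in\rL^{2}(\Omega;\mathbb{C})$ by taking the divergence of the state equation and using the invertibility of the $\rW^{1,\infty}$ coefficient, then invoke the embedding of $\mathbf{H}_{0}(\cu,\Omega)\cap\mathbf{H}(\textnormal{div},\Omega)$ into $\mathbf{H}^{1/2+\epsilon}(\Omega;\mathbb{C})$ (into $\mathbf{H}^{1}$ for convex $\Omega$), which is exactly the paper's argument via \cite[Proposition 3.7 and Theorem 2.17]{MR1626990}; for (i) your reduction to the $\beta$-weighted space for $\mathbf{y}$ and the $\mu$-weighted, normal-trace-free space for $\mathbf{z}=\mu^{-1}\cu\mathbf{y}$, combined with the $s<\tfrac12$ multiplier property of piecewise smooth functions, is precisely the mechanism behind \cite[Section 6.4]{CiarletJr2020}, which is all the paper offers for this part. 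The one caveat is that the decisive transmission-regularity embedding producing the threshold $\mathfrak{t}\in(0,\tfrac12)$ is asserted rather than proven in your sketch, but the paper equally outsources exactly this ingredient to its citation, so both proofs rest on the same external input.
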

\begin{proof}
The first statement stems from \cite[Section 6.4]{CiarletJr2020}, whereas that $\mathrm{(ii)}$ follows from the fact that $\mathbf{y} \in \mathbf{H}_{0}(\cu,\Omega)\cap\mathbf{H}(\textnormal{div},\Omega)$ in combination with the regularity of the potential provided in \cite[Proposition 3.7 and Theorem 2.17]{MR1626990}.
\end{proof}


\subsection{Finite element approximation}
\label{sec:fem}
In this section, we present a finite element approximation for problem \eqref{eq:weak_eq} and review basic error estimates. 

We begin by introducing some terminology and further basic ingredients. 
We denote by $\mathscr{T}_{h}=\{T\}$ a conforming partition of $\overline{\Omega}$ into simplices $T$ with size $h_T=\text{diam}(T)$. 
Let us  define $h:=\max_{T\in\mathscr{T}_h}h_T$ and  $\#\T_{h}$ the total number of elements in $\T_{h}$. 
We denote by $\mathbb{T}:=\{\T_{h}\}_{h>0}$ a collection of conforming and shape regular meshes that are refinements of an initial mesh $\mathscr{T}_{\textrm{in}}$. A further requisite for each mesh $\mathscr{T}_{h}\in\mathbb{T}$ is being conforming with the physical partition $\mathcal{P}$ (see section \ref{sec:partition_fields}) \cite[Section 2.4]{MR4456704}: Given $\mathscr{T}_{h}\in \mathbb{T}$, we assume that, for all $T \in  \mathscr{T}_{h}$ there exists $\Omega_{T}\in \mathcal{P}$ such that $T \subset \Omega_{T}$. 
This implies that the interfaces of the partition $\mathcal{P}$ are covered by mesh faces.

Given a mesh $\mathscr{T}_{h}$, we introduce the lowest-order N\'ed\'elec finite element space \cite{Monk}
\begin{align}\label{def:discrete_space}
\mathbf{V}(\T_{h}) :=\{\boldsymbol{v}_h\in\mathbf{H}_0(\cu;\Omega):\boldsymbol{v}_h|_{T} \in \boldsymbol{\mathcal{N}}_0(T)
~ \forall T \in \mathscr{T}_h\},
\end{align}
with $\boldsymbol{\mathcal{N}}_0(T):=[\mathbb{P}_{0}(T)]^3
\oplus \boldsymbol{x}\times[\widetilde{\mathbb{P}}_0(T)]^3$, where $\widetilde{\mathbb{P}}_0(T)$ is the subset of homogeneous polynomials of degree $0$ defined in $T$.

With these ingredients at hand, we introduce the following  Galerkin approximation to problem \eqref{eq:weak_eq}: Find $\mathbf{y}_{h} \in \mathbf{V}(\T_{h})$ such that
\begin{align}\label{eq:discrete_eq}
(\mu^{-1}\cu \mathbf{y}_{h}, \cu \bw_{h})_{\Omega} - \omega^{2}((\bsiep\cdot\mathfrak{u})\mathbf{y}_{h},\bw_{h})_{\Omega} = \langle \mathbf{f},\bw_{h} \rangle \quad \forall \bw_{h} \in \mathbf{V}(\T_{h}).
\end{align}
The existence and uniqueness of a solution $\mathbf{y}_{h} \in \mathbf{V}(\mathscr{T}_{h})$ for problem \eqref{eq:discrete_eq} follows as in the continuous case. We also have that $\|\mathbf{y}_{h}\|_{\mathbf{H}(\cu,\Omega)}\lesssim \|\mathbf{f}\|_{\mathbf{H}_{0}(\cu,\Omega)'}$.


\subsubsection{A priori error estimates for the model problem}

The following result follows directly from \cite[Theorem 6.15]{CiarletJr2020}.

\begin{theorem}[error estimates]\label{thm:error_estimate}
Let $\mathbf{y}\in \mathbf{H}_{0}(\cu,\Omega)$ and $\mathbf{y}_{h}\in\mathbf{V}(\T_{h})$ be the solutions to \eqref{eq:weak_eq} and \eqref{eq:discrete_eq}, respectively. 
If condition \textnormal{(i)} from Theorem \ref{thm:extra_reg_Maxwell} holds, then we have the a priori error estimate
\begin{align*}
\| \mathbf{y} - \mathbf{y}_{h}\|_{\mathbf{H}(\cu,\Omega)} \lesssim h^{s}\|\mathbf{f}\|_{\mathbf{H}(\textnormal{div},\Omega)},
\end{align*}
where $s\in[0, \mathfrak{t})$ with $\mathfrak{t}$ given as in Theorem \ref{thm:extra_reg_Maxwell}.
\end{theorem}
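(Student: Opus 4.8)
The plan is to follow the classical three-step route for non-coercive Maxwell problems: an abstract near-best-approximation estimate obtained from a G\aa rding inequality together with a duality argument, then an interpolation estimate for the N\'ed\'elec interpolant, and finally the regularity bound of Theorem~\ref{thm:extra_reg_Maxwell}. The essential difficulty is that the sesquilinear form
\[
a(\mathbf{y},\bw) := (\mu^{-1}\cu\mathbf{y},\cu\bw)_{\Omega} - \omega^{2}((\bsiep\cdot\mathfrak{u})\mathbf{y},\bw)_{\Omega}
\]
is indefinite, so C\'ea's lemma does not apply directly; the first step is where the real work lies.

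First I would record a G\aa rding inequality. Since $\mu^{-1}$ is bounded below by a positive constant and $\bsiep\cdot\mathfrak{u}\in\rL^{\infty}(\Omega;\mathbb{C})$, one obtains
\[
\operatorname{Re} a(\bw,\bw) \gtrsim \|\cu\bw\|_{\Omega}^{2} - C\|\bw\|_{\Omega}^{2} \gtrsim \|\bw\|_{\mathbf{H}(\cu,\Omega)}^{2} - (C+1)\|\bw\|_{\Omega}^{2}
\]
for all $\bw\in\mathbf{H}_{0}(\cu,\Omega)$. Combined with the well-posedness of \eqref{eq:weak_eq} via the Fredholm alternative, this is the standard setup for a Schatz-type argument. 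The key technical ingredient I would invoke is the discrete compactness property of the lowest-order N\'ed\'elec spaces, which guarantees that, for $h$ below a threshold $h_{0}$, the discrete problem \eqref{eq:discrete_eq} inherits a uniform (in $h$) discrete inf-sup condition. Using Galerkin orthogonality $a(\mathbf{y}-\mathbf{y}_{h},\bw_{h})=0$ together with the G\aa rding inequality and an Aubin--Nitsche duality estimate to absorb the $\brL^{2}$ term then yields the near-best bound
\[
\|\mathbf{y}-\mathbf{y}_{h}\|_{\mathbf{H}(\cu,\Omega)} \lesssim \inf_{\bw_{h}\in\mathbf{V}(\T_{h})}\|\mathbf{y}-\bw_{h}\|_{\mathbf{H}(\cu,\Omega)}.
\]

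Second, I would bound the infimum by the interpolation error of the N\'ed\'elec interpolant $\mathbf{r}_{h}$. Under condition \textnormal{(i)} of Theorem~\ref{thm:extra_reg_Maxwell} we have $\mathbf{y}\in\mathbf{H}^{s}(\cu,\Omega)$ with $s\in[0,\mathfrak{t})\subset[0,\tfrac12)$, which is precisely the regularity required for $\mathbf{r}_{h}\mathbf{y}$ to be well defined and to satisfy $\|\mathbf{y}-\mathbf{r}_{h}\mathbf{y}\|_{\mathbf{H}(\cu,\Omega)}\lesssim h^{s}\|\mathbf{y}\|_{\mathbf{H}^{s}(\cu,\Omega)}$; here it is crucial that both $\mathbf{y}$ and $\cu\mathbf{y}$ carry fractional regularity, so that the edge degrees of freedom remain meaningful. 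Feeding in the quantitative stability-regularity estimate $\|\mathbf{y}\|_{\mathbf{H}^{s}(\cu,\Omega)}\lesssim\|\mathbf{f}\|_{\mathbf{H}(\textnormal{div},\Omega)}$ that accompanies Theorem~\ref{thm:extra_reg_Maxwell}\textnormal{(i)} then closes the argument.

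The main obstacle is the first step: obtaining quasi-optimality for an indefinite form. All the difficulty is concentrated in establishing the uniform discrete inf-sup condition, which rests on the discrete compactness of the N\'ed\'elec spaces and forces the estimate to hold only asymptotically, i.e.\ for $h\le h_{0}$; steps two and three are then routine. Since this is precisely the content packaged in \cite[Theorem 6.15]{CiarletJr2020}, in the write-up I would verify that hypothesis \textnormal{(i)} of Theorem~\ref{thm:extra_reg_Maxwell} matches the assumptions there and simply cite it.
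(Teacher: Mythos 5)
Your fallback, citing \cite[Theorem 6.15]{CiarletJr2020} after checking hypotheses, coincides with the paper's entire proof, which consists of that single citation. However, the sketch you give of the machinery behind the citation contains one structural misreading and one step that would genuinely fail, so the argument as you describe it is not a correct account of why the theorem holds.

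The structural point first: the form $a(\cdot,\cdot)$ is not merely of G\aa rding type here; it is coercive in the complex sense $|a(\bw,\bw)| \gtrsim \|\bw\|_{\mathbf{H}(\cu,\Omega)}^{2}$ for all $\bw\in\mathbf{H}_{0}(\cu,\Omega)$. Indeed, $\bsiep\cdot\mathfrak{u}$ has imaginary part bounded below by $\omega^{-1}\sigma_{+}\min_{k}\mathfrak{u}_{k}>0$ (the conductivity is bounded away from zero and the admissible controls satisfy $\mathfrak{u}\geq \mathbf{a}>\mathbf{0}$), so a fixed linear combination of $\operatorname{Re}\,a(\bw,\bw)$ and $\operatorname{Im}\,a(\bw,\bw)$ controls $\|\cu\bw\|_{\Omega}^{2}+\|\bw\|_{\Omega}^{2}$. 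This is exactly the coercivity property \eqref{eq:coercivity_a} that the paper later cites from \cite[Proposition 4.1]{CiarletJr2020}, and it is why both \eqref{eq:weak_eq} and \eqref{eq:discrete_eq} are well posed by Lax--Milgram for \emph{every} $h$. Consequently C\'ea's lemma applies directly and uniformly in $h$: there is no need for the Fredholm alternative, discrete compactness, an Aubin--Nitsche duality step, or a mesh threshold $h_{0}$. Your Schatz-type route is not mathematically wrong, but it imports an asymptotic restriction $h\leq h_{0}$ (under which even discrete solvability would only be guaranteed asymptotically) that the dissipative structure of this problem renders unnecessary.

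The genuine gap is in your second step. You assert that $s\in[0,\mathfrak{t})\subset[0,\tfrac12)$ is ``precisely the regularity required'' for the classical N\'ed\'elec interpolant $\mathbf{r}_{h}\mathbf{y}$ to be well defined, because the edge degrees of freedom ``remain meaningful.'' This is backwards. The edge degrees of freedom (tangential moments along edges) are \emph{not} well defined on $\mathbf{H}^{s}(\cu,\Omega)$ for $s\leq\tfrac12$; the classical theory (see, e.g., \cite[Lemma 5.38]{Monk}) requires $s>\tfrac12$. Since condition (i) of Theorem \ref{thm:extra_reg_Maxwell} only delivers $s<\mathfrak{t}<\tfrac12$ --- this low regularity being exactly the price of allowing piecewise-smooth $\mu,\bsiep$ --- your interpolation step fails as written, and with it the whole chain of estimates. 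Repairing it is precisely the technical content of \cite{CiarletJr2020}: the classical interpolant must be replaced by an operator stable at low regularity (the combined interpolation operator constructed there, or a quasi-interpolant of Sch\"oberl/Ern--Guermond type), together with an $h^{s}$ decay estimate for it. In other words, the citation you fall back on is not packaging the routine argument you sketched; it supplies the one ingredient your sketch gets wrong.
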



\subsubsection{A posteriori error estimate for the model problem}
\label{sec:apost_st_eq}
The aim of this section is to introduce a suitable residual-based a posteriori error estimator for \eqref{eq:weak_eq}. 
We note that, since we will not be dealing with uniform refinement 
 within our a posteriori error analysis setting, the parameter $h$ does not bear the meaning of a \emph{mesh size}. 
It can be thus interpreted as $h = 1/n$, where $n\in\mathbb{N}$ is an index set in a sequence of refinements of an initial mesh $\T_{\textrm{in}}$.

Given $T\in \mathscr{T}_{h}$, $\mathscr{S}_T$ denotes the set of faces of $T$, $\mathscr{S}_T^I$ denotes the set of inner faces of $T$. We also define the set 
\[
\mathscr{S}:=\bigcup_{T\in\mathscr{T}_{h}}\mathscr{S}_T.
\]
We decompose $\mathscr{S}=\mathscr{S}_{\Omega}\cup\mathscr{S}_{\Gamma}$,
where  $\mathscr{S}_{\Gamma}:=\{ S\in \mathscr{S}: S\subset \Gamma\}$
and $\mathscr{S}_{\Omega}:=\mathscr{S}\backslash\mathscr{S}_{\Gamma}$. 
For $T \in \mathscr{T}_{h}$, we define the \emph{star} associated with the element $T$ as
\begin{equation}\label{def:patch}
\mathcal{N}_T:= \left \{ T^{\prime}\in\mathscr{T}_{h}: \mathscr{S}_{T}\cap \mathscr{S}_{T^\prime}\neq\emptyset \right \}.
\end{equation}
In an abuse of notation, below we denote by $\mathcal{N}_T$ either the set itself or the union of its elements. We also introduce, given a vertex $\mathsf{v}$ of an element $T$, the sets $\mathcal{N}_{\mathsf{v}}:=\cup_{T'\in\T : \mathsf{v}\in T'} T'$, $\widetilde{\mathcal{N}}_{\mathsf{v}}:=\cup_{\mathsf{v}'\in \mathcal{N}_{\mathsf{v}}} \mathcal{N}_{\mathsf{v}'}$, and 
\begin{align}\label{def:M_T}
\mathcal{M}_{T}:=\bigcup_{\mathsf{v}\in T}\widetilde{\mathcal{N}}_{\mathsf{v}};
\end{align}
see \cite[Section 2]{MR2373173}. Given $S\in \mathscr{S}_{\Omega}$, we denote by $\mathcal{N}_S \subset \mathscr{T}_{h}$ the subset that contains the two elements that have $S$ as a side, namely, $\mathcal{N}_S:=\{T^+,T^-\}$, where $T^+, T^- \in \mathscr{T}_{h}$ are such that $S = T^+ \cap T^-$. 
Moreover, for any  sufficiently smooth  function
$\boldsymbol{v}$, we define the jump through $S$ by
$$
\llbracket \boldsymbol{v} \rrbracket_{S}(\boldsymbol{x}) 
=
\llbracket \boldsymbol{v} \rrbracket(\boldsymbol{x}):
=
\lim_{t\to 0^{+}}\boldsymbol{v}(\boldsymbol{x} - t\boldsymbol{n}_{T}) - \lim_{t\to 0^{+}}\boldsymbol{v}(\boldsymbol{x} + t\boldsymbol{n}_{T}) \quad	\text{for all } \boldsymbol{x}\in S,
$$
where $\boldsymbol{n}_{T}$ denotes the outer unit normal vector. 

Let $T\in \mathscr{T}_{h}$. We assume that $\mathbf{f}|^{}_{T}\in 
\mathbf{H}^{1}(T;\mathbb{C})$. 
We introduce the local error indicator $\mathcal{E}_T^2:=\mathcal{E}_{T,1}^2+\mathcal{E}_{T,2}^2$, where the local contributions $\mathcal{E}_{T,1}$ and $\mathcal{E}_{T,2}$ are defined by
\begin{align*}
\mathcal{E}_{T,1}^2  
\! :=  &
 h_{T}^2\|\di(\mathbf{f} + \omega^{2}(\bsiep\cdot\mathfrak{u})\mathbf{y}_{h})\|_{T}^2
+\dfrac{h_{T}}{2}\sum_{S\in\mathscr{S}_T^I}\left\|\jump{(\mathbf{f} + \omega^{2}(\bsiep\cdot\mathfrak{u})\mathbf{y}_{h})\cdot\boldsymbol{n}}\right\|_{S}^{2}, \\
\mathcal{E}_{T,2}^2 
\! :=  &
  h_{T}^2 \! \left\|\mathbf{f}-\cu(\mu^{-1}\cu\by_h) + \omega^{2}(\bsiep\cdot\mathfrak{u})\mathbf{y}_{h}\right\|_{T}^2 + \dfrac{h_{T}}{2}\sum_{S\in\mathscr{S}_T^I}\left\|\jump{\mu^{-1}\cu\by_h\times\boldsymbol{n}}\right\|_{S}^{2}\!.
\end{align*}
We thus propose the following global a posteriori error estimator associated to the discretization \eqref{eq:discrete_eq} of problem \eqref{eq:weak_eq}: $\mathcal{E}_{\mathscr{T}_h}^{2} := \sum_{T\in\mathscr{T}_h}\mathcal{E}_T^2$.

We introduce the Sch\"oberl quasi-interpolation operator $\Pi_{h}: \mathbf{H}_{0}(\cu,\Omega) \to \mathbf{V}(\T)$, which satisfies \cite[Theorem 1]{MR2373173}: 
For all $\bw \in \mathbf{H}_{0}(\cu,\Omega)$ there exists $\varphi\in \textrm{H}_0^1(\Omega)$ and $\boldsymbol\Psi\in \mathbf{H}_{0}^{1}(\Omega)$ such that $\bw - \Pi_{h}\bw = \nabla \varphi + \boldsymbol\Psi$, and also satisfy
\begin{equation}\label{eq:property_op_decom}
h_{T}^{-1}\|\varphi\|_{T} + \|\nabla \varphi\|_{T} \lesssim \|\bw\|_{\mathcal{M}_{T}}, \qquad
h_{T}^{-1}\|\boldsymbol\Psi\|_{T} + \|\nabla \boldsymbol\Psi\|_{T} \lesssim \|\textbf{curl}\,\bw\|_{\mathcal{M}_{T}},
\end{equation}
where $\mathcal{M}_{T}$ is defined in \eqref{def:M_T}.

We present the following reliability result and, for the sake of readability, a proof.

\begin{theorem}[global reliability of $\mathcal{E}$]\label{thm:global_reli_weak}
Let $\mathbf{y}\in \mathbf{H}_{0}(\cu,\Omega)$ and $\mathbf{y}_{h}\in\mathbf{V}(\T_{h})$ be the solutions to \eqref{eq:weak_eq} and \eqref{eq:discrete_eq}, respectively. 
If condition \textnormal{(i)} from Theorem \ref{thm:extra_reg_Maxwell} holds, then we have the a posteriori error estimate
\begin{align*}
\| \mathbf{y} - \mathbf{y}_{h}\|_{\mathbf{H}(\cu,\Omega)} \lesssim \mathcal{E}_{\mathscr{T}_h}.
\end{align*}
The hidden constant is independent of $\mathbf{y}$, $\mathbf{y}_{h}$, the size of the elements in $\T_{h}$, and $\#\T_{h}$.
\end{theorem}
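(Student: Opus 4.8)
The plan is to exploit the well-posedness of \eqref{eq:weak_eq}, which within the Banach--Ne\v{c}as--Babu\v{s}ka theory is equivalent to a mesh-independent inf-sup stability bound, in order to reduce the control of the error to the control of a residual functional. Write $\mathbf{e} := \mathbf{y} - \mathbf{y}_{h} \in \mathbf{H}_{0}(\cu,\Omega)$ and denote by $a(\cdot,\cdot)$ the sesquilinear form on the left-hand side of \eqref{eq:weak_eq}. Well-posedness provides a constant $\beta > 0$, depending only on the continuous problem, such that
\begin{equation*}
\beta \| \mathbf{e}\|_{\mathbf{H}(\cu,\Omega)} \leq \sup_{\mathbf{0} \neq \bw \in \mathbf{H}_{0}(\cu,\Omega)} \frac{|a(\mathbf{e},\bw)|}{\|\bw\|_{\mathbf{H}(\cu,\Omega)}}.
\end{equation*}
Since the discretization \eqref{eq:discrete_eq} is conforming, Galerkin orthogonality gives $a(\mathbf{e},\bw_{h}) = 0$ for all $\bw_{h} \in \mathbf{V}(\T_{h})$; subtracting the Sch\"oberl quasi-interpolant thus yields $a(\mathbf{e},\bw) = a(\mathbf{e},\bw - \Pi_{h}\bw)$, which by \eqref{eq:weak_eq} equals the residual $\langle \mathbf{f},\bw - \Pi_{h}\bw\rangle - a(\mathbf{y}_{h},\bw - \Pi_{h}\bw)$. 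The whole task reduces to bounding this residual by $\mathcal{E}_{\mathscr{T}_{h}}\|\bw\|_{\mathbf{H}(\cu,\Omega)}$.

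To this end I would invoke the decomposition $\bw - \Pi_{h}\bw = \nabla\varphi + \boldsymbol\Psi$, with $\varphi \in \mathrm{H}_{0}^{1}(\Omega)$ and $\boldsymbol\Psi \in \mathbf{H}_{0}^{1}(\Omega)$, supplied by \cite[Theorem 1]{MR2373173} together with the local bounds \eqref{eq:property_op_decom}. The residual splits accordingly into a gradient part and a regular part. For the gradient part, $\cu\nabla\varphi = \mathbf{0}$ annihilates the curl term, so only the source and the mass term remain; integrating by parts element by element produces the volumetric residual $\di(\mathbf{f} + \omega^{2}(\bsiep\cdot\mathfrak{u})\mathbf{y}_{h})$ tested against $\varphi$ on each $T$ and, across interior faces, the normal jumps $\jump{(\mathbf{f} + \omega^{2}(\bsiep\cdot\mathfrak{u})\mathbf{y}_{h})\cdot\bn}$, exactly the two terms defining $\mathcal{E}_{T,1}$. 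For the regular part, integrating by parts the curl term yields the volumetric residual $\mathbf{f} - \cu(\mu^{-1}\cu\mathbf{y}_{h}) + \omega^{2}(\bsiep\cdot\mathfrak{u})\mathbf{y}_{h}$ tested against $\boldsymbol\Psi$, together with the tangential jumps $\jump{\mu^{-1}\cu\mathbf{y}_{h}\times\bn}$, matching the two terms in $\mathcal{E}_{T,2}$.

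It then remains to estimate each contribution by Cauchy--Schwarz and the stability bounds \eqref{eq:property_op_decom}. For the volumetric terms I would extract one power of $h_{T}$ via $\|\varphi\|_{T} = h_{T}(h_{T}^{-1}\|\varphi\|_{T}) \lesssim h_{T}\|\bw\|_{\mathcal{M}_{T}}$, and analogously $\|\boldsymbol\Psi\|_{T} \lesssim h_{T}\|\cu\bw\|_{\mathcal{M}_{T}}$, reproducing the weights $h_{T}^{2}$ inside $\mathcal{E}_{T,1}^{2}$ and $\mathcal{E}_{T,2}^{2}$. For the face terms I would apply a scaled trace inequality to pass from $\|\varphi\|_{S}$ (resp. $\|\boldsymbol\Psi\|_{S}$) to weighted $\mathrm{L}^{2}$ norms on the adjacent elements and again invoke \eqref{eq:property_op_decom}, producing the $h_{T}/2$ factors in front of the jump sums. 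Summing over $T \in \mathscr{T}_{h}$ and using the finite overlap of the enlarged patches $\mathcal{M}_{T}$, the sums of $\|\bw\|_{\mathcal{M}_{T}}$ and $\|\cu\bw\|_{\mathcal{M}_{T}}$ collapse to a mesh-independent multiple of $\|\bw\|_{\mathbf{H}(\cu,\Omega)}$, whence $|a(\mathbf{e},\bw)| \lesssim \mathcal{E}_{\mathscr{T}_{h}}\|\bw\|_{\mathbf{H}(\cu,\Omega)}$, and the inf-sup bound closes the argument.

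The step I expect to be the genuine obstacle is the very first reduction: because $a(\cdot,\cdot)$ is sign-indefinite (the $-\omega^{2}$ mass term destroys coercivity), the control of the error cannot rest on an energy estimate and must be routed through the inf-sup constant provided by well-posedness, whose independence of the discretization has to be asserted cleanly. The remaining work is technical rather than conceptual, namely verifying that the element-wise integration by parts is legitimate under the standing regularity $\mathbf{f}|_{T} \in \mathbf{H}^{1}(T;\mathbb{C})$ and condition (i) of Theorem \ref{thm:extra_reg_Maxwell}, and that the trace and overlap constants are tracked uniformly over the family $\mathbb{T}$.
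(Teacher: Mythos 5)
Your proposal is correct, and the residual machinery at its core (Galerkin orthogonality, the Sch\"oberl decomposition $\bw-\Pi_h\bw=\nabla\varphi+\boldsymbol\Psi$, elementwise integration by parts producing exactly the terms of $\mathcal{E}_{T,1}$ and $\mathcal{E}_{T,2}$, then \eqref{eq:property_op_decom}, scaled trace inequalities, and finite patch overlap) coincides with the paper's proof. Where you diverge is the closing step, and your stated reason for the divergence rests on a misconception. You claim the form $a(\cdot,\cdot)$ is sign-indefinite so that ``the control of the error cannot rest on an energy estimate,'' but for this problem that is false: since $\bsiep=\varepsilon+i\sigma\omega^{-1}$ with $\sigma\geq\sigma_{+}>0$, the problem is lossy, and the form satisfies the coercivity-type bound $\|\bv\|_{\mathbf{H}(\cu,\Omega)}^{2}\lesssim|(\mu^{-1}\cu\bv,\cu\bv)_{\Omega}-\omega^{2}((\bsiep\cdot\mathfrak{u})\bv,\bv)_{\Omega}|$ of \cite[Proposition 4.1]{CiarletJr2020} (the imaginary part controls $\|\bv\|_{\Omega}^{2}$, after which the real part controls the curl). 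The paper exploits exactly this: it inserts $\bw=\mathbf{e}_{\mathbf{y}}$ into the residual identity and concludes directly from $\|\mathbf{e}_{\mathbf{y}}\|^{2}_{\mathbf{H}(\cu,\Omega)}\lesssim\mathcal{E}_{\mathscr{T}_h}\|\mathbf{e}_{\mathbf{y}}\|_{\mathbf{H}(\cu,\Omega)}$. Your alternative, routing the argument through the inf-sup constant $\beta=1/\|A^{-1}\|$ supplied by well-posedness of \eqref{eq:weak_eq}, is nonetheless perfectly valid: $\beta$ is a property of the continuous problem and is trivially mesh-independent, and your bound $|a(\mathbf{e},\bw)|\lesssim\mathcal{E}_{\mathscr{T}_h}\|\bw\|_{\mathbf{H}(\cu,\Omega)}$ holds for all $\bw$, not just $\bw=\mathbf{e}$. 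What the inf-sup route buys is generality (it would survive in the lossless case $\sigma=0$ away from resonances, where coercivity genuinely fails, at the price of a frequency-dependent constant); what the paper's route buys is economy, since the coercivity bound is already available for this lossy problem and avoids invoking the Banach--Ne\v{c}as--Babu\v{s}ka framework altogether.
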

\begin{proof}
To simplify the presentation of the material, we define $\mathbf{e}_{\mathbf{y}}:=\mathbf{y} - \mathbf{y}_{h}$. Let $\bw \in \mathbf{H}_{0}(\cu,\Omega)$ be arbitrary.
The use of Galerkin orthogonality in conjunction with the decomposition $\bw - \Pi_{h}\bw = \nabla \varphi + \boldsymbol\Psi$, with $\varphi\in \textrm{H}_0^1(\Omega)$ and $\boldsymbol\Psi\in \mathbf{H}_{0}^{1}(\Omega)$, yield
\begin{align*}
(\mu^{-1} &\cu \mathbf{e}_{\mathbf{y}}, \cu \bw)_{\Omega} - \omega^{2}((\bsiep\cdot\mathfrak{u})\mathbf{e}_{\mathbf{y}},\bw)_{\Omega}
\\
& = (\mathbf{f} + \omega^{2}(\bsiep\cdot\mathfrak{u})\mathbf{y}_{h}, (\bw - \Pi_{h}\bw))_{\Omega} - (\mu^{-1}\cu \mathbf{y}_{h}, \cu (\bw - \Pi_{h}\bw))_{\Omega} 
\\
& = (\mathbf{f}+ \omega^{2}(\bsiep\cdot\mathfrak{u})\mathbf{y}_{h}, \nabla \varphi )_{\Omega} + (\mathbf{f} + \omega^{2}(\bsiep\cdot\mathfrak{u})\mathbf{y}_{h},\boldsymbol\Psi)_{\Omega} - (\mu^{-1}\cu \mathbf{y}_{h}, \cu \boldsymbol\Psi)_{\Omega}.
\end{align*}
Then, applying an elementwise integration by parts formula we obtain
\begin{align}\label{eq:identity_apost}
&(\mu^{-1} \cu \mathbf{e}_{\mathbf{y}}, \cu \bw)_{\Omega}  - \omega^{2}((\bsiep\cdot\mathfrak{u})\mathbf{e}_{\mathbf{y}},\bw)_{\Omega} \\ \nonumber
= 
\!&\sum_{T\in\T_{h}}\!(\mathbf{f}  + \omega^{2}(\bsiep\cdot\mathfrak{u})\mathbf{y}_{h} \!-\! \cu(\mu^{-1}\cu \mathbf{y}_{h}),\boldsymbol\Psi)_{T} -  \sum_{S\in\mathcal{S}}(\llbracket \mu^{-1}\cu \mathbf{y}_{h}\times \boldsymbol{n}\rrbracket,\boldsymbol\Psi)_{S} \\ \nonumber
&-\sum_{T\in \T_{h}} (\text{div}(\mathbf{f} + \omega^{2}(\bsiep\cdot\mathfrak{u})\mathbf{y}_{h}),\varphi)_{T}
+ \sum_{S\in\mathcal{S}}(\llbracket (\mathbf{f} + \omega^{2}(\bsiep\cdot\mathfrak{u})\mathbf{y}_{h})\cdot\boldsymbol{n}\rrbracket, \varphi)_{S}.
\end{align}
On the other hand, from the coercivity property \cite[Proposition 4.1]{CiarletJr2020} we observe that
\begin{equation}\label{eq:coercivity_a}
\|\mathbf{e}_{\mathbf{y}}\|_{\mathbf{H}(\cu,\Omega)}^{2} 
\lesssim
|(\mu^{-1} \cu \mathbf{e}_{\mathbf{y}}, \cu \mathbf{e}_{\mathbf{y}})_{\Omega} - \omega^{2}((\bsiep\cdot\mathfrak{u})\mathbf{e}_{\mathbf{y}},\mathbf{e}_{\mathbf{y}})_{\Omega}|.
\end{equation}
Therefore, using $\bw = \mathbf{e}_{\mathbf{y}}$ in \eqref{eq:identity_apost}, inequality \eqref{eq:coercivity_a}, basic inequalities, the estimates in \eqref{eq:property_op_decom}, and the finite number of overlapping patches, we arrive at $\|\mathbf{e}_{\mathbf{y}}\|_{\mathbf{H}(\cu,\Omega)}^{2}
\lesssim \mathcal{E}_{\T_{h}}\|\mathbf{e}_{\mathbf{y}}\|_{\mathbf{H}(\cu,\Omega)}$, which concludes the proof.
\end{proof}


\section{The optimal control problem}
\label{sec:the_ocp}

In this section, we analyze the following weak formulation of the optimal control problem \eqref{eq:cost_function}--\eqref{def:box_constraints}: Find
\begin{equation}\label{eq:weak_min_problem}
\min\{ \mathcal{J}(\by,\mathbf{u}):~ (\by,\mathbf{u}) \in  \mathbf{H}_{0}(\cu,\Omega) \times U_{ad}\}, 
\end{equation}
subject to
\begin{equation}\label{eq:weak_st_eq}
(\mu^{-1}\cu \by, \cu \bw)_{\Omega} - \omega^{2}((\bsiep\cdot\mathbf{u})\by,\bw)_{\Omega} = (\bbf,\bw)_{\Omega} \quad \forall \bw \in \mathbf{H}_{0}(\cu,\Omega).
\end{equation}
We recall that $\bbf \in \brL^2(\Omega;\mathbb{C})$, $U_{ad}$ is defined in \eqref{def:box_constraints}, and that $\omega\in \mathbb{R}^{+}$, $\mu\in \rL^\infty(\Omega)$, and $\bsiep$ are given as in section \ref{sec:model_problem}. Note that in \eqref{eq:weak_st_eq} the control corresponds to a vector acting on both the
electrical permittivity and conductivity of the material $\Omega$, in a given partition. We have considered this scenario only for the sake of mathematical generality. In particular, the analysis developed below can be adapted to take into consideration the real-valued coefficients $\varepsilon$ or $\sigma$.

%

\begin{remark}[extensions]\label{rmk:real2}
 The analysis that we present in what follows extends to other bilinear optimal control problems of relevant variables within the Maxwell's equations framework.
 For instance, given real-valued coefficients $\kappa,\chi \in P\rW^{1,\infty}(\Omega)$ satisfying $\kappa \geq \kappa_0 >0 $ 
and $\chi \geq \chi_0 >0 $ with $\kappa_0, \mu_0\in \mathbb{R}^{+}$, 
the state equation \eqref{eq:strong_state_eq} can be modified as follows:
\begin{align*}
\cu \chi\cu \by  +(\kappa\cdot \mathbf{u})\by= \bbf \quad \mbox{in } \Omega, \qquad
\by\times \bn= \mathbf{0} \quad \mbox{on } \Gamma.
\end{align*}
This problem arises, for example, when discretizing time-dependent Maxwell's equations 
(see, e.g., \cite{MR2373173, MR1735971, MR2560827, MR2319088} for a posteriori error analysis of such formulation).
\end{remark}


\subsection{Existence of solutions}\label{sec:existence_of_sol}

Let us introduce the set $\mathbf{U}:=\{\mathbf{v}\in \mathbb{R}^{\ell}: \exists \mathbf{c}\in\mathbb{R}^{\ell}, \mathbf{c} > \mathbf{0} \text{ such that } \mathbf{v} > \mathbf{c} > \mathbf{0}\}$. We note that $U_{ad}\subset \mathbf{U}$. With $\mathbf{U}$ at hand, we introduce the control-to-state operator $\mathcal{S} : \mathbf{U} \to \mathbf{H}_{0}(\cu,\Omega)$ as follows: for any $\mathbf{u}\in\mathbf{U}$, $\mathcal{S}$ associates to it the unique solution $\by \in \mathbf{H}_{0}(\cu,\Omega)$ of problem \eqref{eq:weak_st_eq}.

The next result states differentiability properties of $\mathcal{S}$.

\begin{theorem}[differentiability properties of $\mathcal{S}$]\label{thm:diff_prop_S}
The control-to-state operator $\mathcal{S}$ is of class $C^{\infty}$. Moreover, for $\mathbf{h}\in \mathbb{R}^{\ell}$, $\bz:=\mathcal{S}'(\mathbf{u})\mathbf{h}\in \mathbf{H}_{0}(\cu,\Omega)$ corresponds to the unique solution to 
\begin{equation}\label{eq:first_der_S}
(\mu^{-1}\cu \bz, \cu \bw)_{\Omega} - \omega^{2}((\bsiep\cdot\mathbf{u})\bz,\bw)_{\Omega} = \omega^{2}((\bsiep\cdot\mathbf{h}) \by,\bw)_{\Omega}
\end{equation}
for all  $\bw \in \mathbf{H}_{0}(\cu,\Omega)$, where $\by = \mathcal{S}\mathbf{u}$. Moreover, if $\mathbf{h}_1,\mathbf{h}_2\in \mathbb{R}^{\ell}$, then $\boldsymbol\zeta=\mathcal{S}''(\mathbf{u})(\mathbf{h}_1,\mathbf{h}_2)\in \mathbf{H}_{0}(\cu,\Omega)$ is the unique solution to 
\begin{equation}\label{eq:second_der_S}
(\mu^{-1}\cu \boldsymbol\zeta, \cu \bw)_{\Omega} - \omega^{2}((\bsiep\cdot\mathbf{u})\boldsymbol\zeta,\bw)_{\Omega}
 = \omega^{2}((\bsiep\cdot\mathbf{h}_{1}) \bz_{\mathbf{h}_{2}} + (\bsiep\cdot\mathbf{h}_{2}) \bz_{\mathbf{h}_{1}},\bw)_{\Omega} \hspace{-0.3cm}
\end{equation}
for all  $\bw \in \mathbf{H}_{0}(\cu,\Omega)$, with $\bz_{\mathbf{h}_{i}}=\mathcal{S}'(\mathbf{u})\mathbf{h}_i$ and $i \in \{ 1,2 \}$.
\end{theorem}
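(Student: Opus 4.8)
The plan is to realize $\mathcal{S}$ as an implicit function and to invoke the Implicit Function Theorem in Banach spaces. To this end I would introduce the map
\[
\mathcal{F}: \mathbf{U}\times \mathbf{H}_{0}(\cu,\Omega)\to \mathbf{H}_{0}(\cu,\Omega)',
\qquad
\langle \mathcal{F}(\mathbf{u},\by),\bw\rangle := (\mu^{-1}\cu \by, \cu \bw)_{\Omega} - \omega^{2}((\bsiep\cdot\mathbf{u})\by,\bw)_{\Omega} - (\bbf,\bw)_{\Omega},
\]
so that, by definition of the control-to-state operator, $\mathcal{F}(\mathbf{u},\mathcal{S}\mathbf{u})=0$ for every $\mathbf{u}\in\mathbf{U}$. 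The key structural observation is that $\mathcal{F}$ is affine in $\by$ and affine in $\mathbf{u}$, the only genuinely coupled term $-\omega^{2}((\bsiep\cdot\mathbf{u})\by,\bw)_{\Omega}$ being \emph{bilinear} in $(\mathbf{u},\by)$. Hence $\mathcal{F}$ is a vector-valued polynomial of degree two in its arguments and is therefore of class $C^{\infty}$, all partial derivatives of order three and higher vanishing identically.

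Next I would verify the invertibility hypothesis of the theorem. The partial Fr\'echet derivative with respect to the state is
\[
\langle D_{\by}\mathcal{F}(\mathbf{u},\by)\bz,\bw\rangle = (\mu^{-1}\cu \bz, \cu \bw)_{\Omega} - \omega^{2}((\bsiep\cdot\mathbf{u})\bz,\bw)_{\Omega},
\]
which is precisely the sesquilinear form governing the state equation \eqref{eq:weak_eq}. Since $\mathbf{U}$ is the open positive orthant, every $\mathbf{u}\in\mathbf{U}$ is componentwise strictly positive, so the coefficient $\bsiep\cdot\mathbf{u}=\sum_{k}\bsiep|_{\Omega_k}\mathbf{u}_k$ retains positive real and imaginary parts bounded away from zero and above; consequently the sign structure required by \cite[Theorem 8.3.5]{MR3793186} is preserved, and $D_{\by}\mathcal{F}(\mathbf{u},\by)$ is a boundedly invertible operator from $\mathbf{H}_{0}(\cu,\Omega)$ onto $\mathbf{H}_{0}(\cu,\Omega)'$ at every point. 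The Implicit Function Theorem then yields that $\mathcal{S}$ is of class $C^{\infty}$ on the open set $\mathbf{U}$.

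To identify $\mathcal{S}'$ I would differentiate the identity $\mathcal{F}(\mathbf{u},\mathcal{S}\mathbf{u})=0$ in a direction $\mathbf{h}\in\R^{\ell}$ and apply the chain rule, obtaining $D_{\mathbf{u}}\mathcal{F}(\mathbf{u},\by)\mathbf{h} + D_{\by}\mathcal{F}(\mathbf{u},\by)\bz = 0$ with $\bz=\mathcal{S}'(\mathbf{u})\mathbf{h}$ and $\by=\mathcal{S}\mathbf{u}$. Since $\langle D_{\mathbf{u}}\mathcal{F}(\mathbf{u},\by)\mathbf{h},\bw\rangle = -\omega^{2}((\bsiep\cdot\mathbf{h})\by,\bw)_{\Omega}$, rearranging gives exactly \eqref{eq:first_der_S}, and the invertibility established above identifies $\bz$ uniquely. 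For $\mathcal{S}''$ I would differentiate the first-order identity \eqref{eq:first_der_S} once more, in a second direction $\mathbf{h}_{2}$, obtaining $\boldsymbol\zeta=\mathcal{S}''(\mathbf{u})(\mathbf{h}_{1},\mathbf{h}_{2})$; collecting the resulting terms and moving the mixed source contributions to the right-hand side produces precisely \eqref{eq:second_der_S}, with $\bz_{\mathbf{h}_{i}}=\mathcal{S}'(\mathbf{u})\mathbf{h}_{i}$.

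I expect the main delicate point to be the bookkeeping in this second differentiation: both the explicit coefficient $\bsiep\cdot\mathbf{u}$ \emph{and} the functions $\by$ and $\bz_{\mathbf{h}_{1}}$ entering \eqref{eq:first_der_S} depend on $\mathbf{u}$, so one must differentiate through each of them. Differentiating the left-hand coefficient contributes $-\omega^{2}((\bsiep\cdot\mathbf{h}_{2})\bz_{\mathbf{h}_{1}},\bw)_{\Omega}$, while differentiating $\by$ on the right contributes $\omega^{2}((\bsiep\cdot\mathbf{h}_{1})\bz_{\mathbf{h}_{2}},\bw)_{\Omega}$; it is the symmetric pairing of these two terms that yields the source $(\bsiep\cdot\mathbf{h}_{1})\bz_{\mathbf{h}_{2}} + (\bsiep\cdot\mathbf{h}_{2})\bz_{\mathbf{h}_{1}}$ in \eqref{eq:second_der_S}. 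Everything else reduces to the standing well-posedness of the state form guaranteed by \cite[Theorem 8.3.5]{MR3793186}, and the same invertibility gives uniqueness of $\boldsymbol\zeta$.
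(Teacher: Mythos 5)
Your proposal is correct and follows essentially the same route as the paper: both define the residual map $\mathcal{F}$ built from the state equation, verify that $\partial_{\by}\mathcal{F}$ is an isomorphism (the paper via Lax--Milgram, you via the well-posedness result of \cite[Theorem 8.3.5]{MR3793186}, which amounts to the same thing), apply the implicit function theorem, and differentiate the identity $\mathcal{F}(\mathcal{S}\mathbf{u},\mathbf{u})=0$ to obtain \eqref{eq:first_der_S} and \eqref{eq:second_der_S}. The only difference is that you carry out explicitly the ``simple calculations'' the paper leaves to the reader, including the correct symmetric bookkeeping for the second derivative.
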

\begin{proof}
The proof is based on the implicit function theorem. With this in mind, we
define the operator $\mathcal{F}: \mathbf{H}_0(\cu,\Omega)\times \mathbf{U}\to \mathbf{H}_0(\cu,\Omega)'$ by 
\begin{align*}
\mathcal{F}(\by,\mathbf{u}):=\cu \mu^{-1}\cu \by  - \omega^{2} (\bsiep\cdot \mathbf{u})\by - \bbf.
\end{align*}
A direct computation reveals that $\mathcal{F}$ is of class $C^{\infty}$ and satisfies $\mathcal{F}(\mathcal{S}\mathbf{u},\mathbf{u})=0$ for all $\mathbf{u}\in\mathbf{U}$. Moreover, Lax--Milgram lemma yields that
\[
\partial_{\by}\mathcal{F}(\by,\mathbf{u})(\bz) = \cu \mu^{-1}\cu \bz  -\omega^{2} (\bsiep\cdot \mathbf{u})\bz,
\]
is an isomorphism from $\mathbf{H}_0(\cu,\Omega)$ to $\mathbf{H}_0(\cu,\Omega)'$. Therefore, the implicit function theorem implies that the control-to-state operator $\mathcal{S}$ is infinitely Fr\'echet differentiable. Finally, \eqref{eq:first_der_S} and \eqref{eq:second_der_S} follow by simple calculations.
\end{proof}

Let us define the reduced cost functional $j: \mathbf{U}\to \mathbb{R}_{0}^{+}$ by $j(\mathbf{u})=\mathcal{J}(\mathcal{S}\mathbf{u},\mathbf{u})$. A direct consequence of Theorem \ref{thm:diff_prop_S} is the Fréchet differentiability $j$. 

\begin{corollary}[differentiability properties of $j$]
The reduced cost functional $j: \mathbf{U}\to \mathbb{R}_{0}^{+}$ is of class $C^{\infty}$. 
\end{corollary}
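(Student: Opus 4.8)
The plan is to exhibit $j$ as the composition of two smooth maps and invoke the chain rule for Fréchet-differentiable maps between Banach spaces. First I would verify that the cost functional $\mathcal{J}:\mathbf{H}_{0}(\cu,\Omega)\times\mathbf{U}\to\mathbb{R}_{0}^{+}$ defined in \eqref{eq:cost_function} is itself of class $C^{\infty}$. Each of its three terms is the squared norm of an affine function of $(\by,\mathbf{u})$: the maps $\by\mapsto\by-\by_\Omega$, $\by\mapsto\cu\by-\bE_\Omega$, and $\mathbf{u}\mapsto\mathbf{u}$ are continuous and affine (with $\cu$ bounded from $\mathbf{H}_{0}(\cu,\Omega)$ into $\brL^2(\Omega;\mathbb{C})$), while the squared-norm map $\bv\mapsto\|\bv\|^2=(\bv,\bv)$ on a Hilbert space is a continuous quadratic form, hence a polynomial of degree two and therefore $C^{\infty}$ with all derivatives of order $\geq 3$ vanishing. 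Since finite sums and compositions of $C^{\infty}$ maps are $C^{\infty}$, it follows that $\mathcal{J}\in C^{\infty}$.

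Next I would use that, by Theorem \ref{thm:diff_prop_S}, the control-to-state operator $\mathcal{S}:\mathbf{U}\to\mathbf{H}_{0}(\cu,\Omega)$ is of class $C^{\infty}$. Consequently the map $\mathbf{u}\mapsto(\mathcal{S}\mathbf{u},\mathbf{u})$ from $\mathbf{U}$ into $\mathbf{H}_{0}(\cu,\Omega)\times\mathbf{U}$ is $C^{\infty}$, being the product of the smooth operator $\mathcal{S}$ with the identity. Writing $j=\mathcal{J}\circ(\mathcal{S},\mathrm{id})$, the chain rule then yields that $j$ is of class $C^{\infty}$, which is the assertion.

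I do not expect a genuine obstacle: the substance of the argument is already encapsulated in the smoothness of $\mathcal{S}$ established in Theorem \ref{thm:diff_prop_S}, and the remaining ingredients are entirely standard. The only point that warrants a moment's care is that $\by$ and the data are complex-valued whereas $\mathcal{J}$ takes real values; I would therefore treat $\mathbf{H}_{0}(\cu,\Omega)$ and $\brL^2(\Omega;\mathbb{C})$ as \emph{real} Banach spaces, so that $\|\cdot\|^{2}=(\cdot,\cdot)$ is a bona fide real quadratic form and the chain rule applies in the real Fréchet sense. With this convention the composition argument goes through verbatim.
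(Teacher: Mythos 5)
Your argument is correct and coincides with the paper's own reasoning: the paper presents this corollary as a direct consequence of Theorem \ref{thm:diff_prop_S}, and your proof is simply the explicit chain-rule elaboration of that remark, writing $j=\mathcal{J}\circ(\mathcal{S},\mathrm{id})$ and noting that $\mathcal{J}$ is a smooth (quadratic) functional. Your observation that the spaces must be treated as real Banach spaces, so that the squared norms are real quadratic forms and real Fr\'echet differentiability is the relevant notion, is a worthwhile point of care that the paper leaves implicit.
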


Since $j$ is continuous and $U_{ad}$ is compact, Weierstra{\ss} theorem immediately yields the existence of at least one globally optimal control $\mathbf{u}^{*}\in U_{ad}$, with a corresponding optimal state $\by^{*}:=\mathcal{S}\mathbf{u}^{*}\in \mathbf{H}_0(\cu,\Omega)$. This is summarized in the next result.

\begin{theorem}[existence of optimal solutions]\label{thm:existence_opt_sol}
The optimal control problem \eqref{eq:weak_min_problem}--\eqref{eq:weak_st_eq} admits at least one global solution $(\by^{*},\mathbf{u}^{*}) \in  \mathbf{H}_{0}(\cu,\Omega) \times U_{ad}$. 
\end{theorem}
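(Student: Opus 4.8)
The plan is to obtain existence via the direct method of the calculus of variations, exploiting that the admissible control set $U_{ad}$ is finite-dimensional and compact. Since $j$ is continuous and $U_{ad}$ is compact, the conclusion will follow directly from the Weierstra{\ss} extreme value theorem, so the real content is to verify these two hypotheses carefully.

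First I would note that $U_{ad}=\{\mathbf{v}\in\mathbb{R}^{\ell}: \mathbf{a}\leq\mathbf{v}\leq\mathbf{b}\}$ is a closed box in $\mathbb{R}^{\ell}$ and hence compact, being closed and bounded in a finite-dimensional space. The inclusion $U_{ad}\subset\mathbf{U}$ (already observed in the paragraph introducing $\mathbf{U}$, using $\mathbf{0}<\mathbf{a}$) guarantees that the control-to-state operator $\mathcal{S}$ is well defined on all of $U_{ad}$, so that the reduced cost functional $j(\mathbf{u})=\mathcal{J}(\mathcal{S}\mathbf{u},\mathbf{u})$ makes sense there.

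Next I would establish continuity of $j$ on $U_{ad}$. This is where the main work lies, and it follows from the Corollary stating that $j$ is of class $C^{\infty}$ on $\mathbf{U}$; in particular $j$ is continuous. If one wanted a self-contained argument, the key observation is that $\mathbf{u}\mapsto\mathcal{S}\mathbf{u}$ is continuous from $\mathbf{U}$ into $\mathbf{H}_0(\cu,\Omega)$, which is precisely the $C^{\infty}$ (hence $C^0$) regularity granted by Theorem \ref{thm:diff_prop_S}; composing with the continuous maps $\by\mapsto\|\by-\by_\Omega\|^2_{\brL^2(\Omega;\mathbb{C})}$, $\by\mapsto\|\cu\by-\bE_\Omega\|^2_{\brL^2(\Omega;\mathbb{C})}$, and $\mathbf{u}\mapsto\tfrac{\alpha}{2}\|\mathbf{u}\|^2_{\mathbb{R}^{\ell}}$ yields continuity of $j$.

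Finally I would apply Weierstra{\ss}: a continuous real-valued function on a nonempty compact set attains its minimum, so there exists $\mathbf{u}^{*}\in U_{ad}$ with $j(\mathbf{u}^{*})=\min_{\mathbf{u}\in U_{ad}}j(\mathbf{u})$; setting $\by^{*}:=\mathcal{S}\mathbf{u}^{*}\in\mathbf{H}_0(\cu,\Omega)$ gives the claimed global solution $(\by^{*},\mathbf{u}^{*})$. The only genuine subtlety — and the point I would expect to be the main obstacle in a problem of this type — is the well-posedness and continuity of $\mathcal{S}$ on $U_{ad}$, since the control enters \eqref{eq:weak_st_eq} as a coefficient; here, however, the finite-dimensionality of the control, the strict positivity $\mathbf{a}>\mathbf{0}$, and the already-cited well-posedness of \eqref{eq:weak_eq} together with Theorem \ref{thm:diff_prop_S} resolve it, so no compactness argument in an infinite-dimensional control space is needed.
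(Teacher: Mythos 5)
Your proposal is correct and follows exactly the paper's own argument: the paper also deduces existence from the compactness of $U_{ad}$ and the continuity of the reduced functional $j$ (a consequence of the $C^{\infty}$ regularity of $\mathcal{S}$ established via the implicit function theorem), and then invokes the Weierstra{\ss} theorem, setting $\by^{*}:=\mathcal{S}\mathbf{u}^{*}$. Your additional remarks on why $\mathcal{S}$ is well defined on $U_{ad}$ (via $U_{ad}\subset\mathbf{U}$ and $\mathbf{a}>\mathbf{0}$) are consistent with, and make explicit, what the paper leaves implicit.
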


Since our optimal control problem \eqref{eq:weak_min_problem}--\eqref{eq:weak_st_eq} is not convex, we discuss optimality conditions under the framework of local solutions in $\mathbb{R}^{\ell}$ with $\ell\in\mathbb{N}$. To be precise, a control $\mathbf{u}^{*} \in {U}_{ad}$ is said to be locally optimal in $\mathbb{R}^{\ell}$ for \eqref{eq:weak_min_problem}--\eqref{eq:weak_st_eq} if there exists a constant $\delta>0$ such that $\mathcal{J}(\by^{*},\mathbf{u}^{*})\leq \mathcal{J}(\by,\mathbf{u})$ for all $\mathbf{u}\in U_{ad}$ such that $\|\mathbf{u}-\mathbf{u}^{*}\|_{\mathbb{R}^{\ell}}\leq \delta$. Here, $\by^{*}$ and $\by$ denote the states associated to $\mathbf{u}^{*}$ and $\mathbf{u}$, respectively.

\subsection{Optimality conditions}\label{sec:opt_cond}


\subsubsection{First-order optimality condition}\label{sec:1st_opt_condition}
We begin with a standard result: if $\mathbf{u}^{*}\in U_{ad}$ denotes a locally optimal control for \eqref{eq:weak_min_problem}--\eqref{eq:weak_st_eq}, then \cite[Theorem 3.7]{MR3311948}
\begin{align}
\label{eq:variational_inequality}
j'(\mathbf{u}^{*})(\mathbf{u} - \mathbf{u}^{*}) \geq 0 \qquad \forall \mathbf{u} \in U_{ad}.
\end{align}
In \eqref{eq:variational_inequality}, $j'(\mathbf{u}^{*})$ denotes the Gate\^aux derivative of $j$ at $\mathbf{u}^{*}$. To explore \eqref{eq:variational_inequality} we introduce, given $\mathbf{u}\in U_{ad}$ and $\by = \mathcal{S}\mathbf{u}$, the \emph{adjoint variable} $\bp \in \mathbf{H}_{0}(\cu,\Omega)$ as the unique solution to the \emph{adjoint equation}
\begin{align}\label{eq:adj_eq}
& (\mu^{-1}\cu \bp, \cu \bw)_{\Omega} - \omega^{2}((\bsiep\cdot \mathbf{u})\bp,\bw)_{\Omega}  \\
& \qquad \qquad = (\overline{\by - \by_{\Omega}},\bw)_{\Omega} + (\overline{\cu \by - \bE_{\Omega}}, \cu \bw)_{\Omega} \nonumber
\end{align}
for all $\bw \in \mathbf{H}_{0}(\cu,\Omega)$. The well-posedness of \eqref{eq:adj_eq} follows from the Lax-Milgram lemma. Moreover, the following stability estimate holds:
\begin{align}\label{eq:stab_adj_eq}
\|\bp\|_{\mathbf{H}(\cu,\Omega)} 
\lesssim 
\|\by\|_{\mathbf{H}(\cu,\Omega)} + \|\by_{\Omega}\|_{\Omega} + \|\mathbf{E}_{\Omega}\|_{\Omega}
\lesssim
\|\bbf\|_{\Omega} + \|\by_{\Omega}\|_{\Omega} + \|\mathbf{E}_{\Omega}\|_{\Omega}.
\end{align}

We have all the ingredients at hand to give a characterization for \eqref{eq:variational_inequality}.

\begin{theorem}[first-order necessary optimality condition]\label{thm:first_ord_opt} 
Every locally optimal control $\mathbf{u}^{*}\in U_{ad}$ for problem \eqref{eq:weak_min_problem}--\eqref{eq:weak_st_eq} satisfies the variational inequality
\begin{align}\label{eq:var_ineq_with_adj_state}
\sum_{k=1}^{\ell}\left(\alpha\mathbf{u}^{*}_{k} + \omega^{2} \mathfrak{Re}\left\{\int_{\Omega_{k}}\bsiep\by^{*}\cdot\bp^{*}\right\}\right)\left(\mathbf{u}_{k} - \mathbf{u}^{*}_{k}\right) \geq 0 \qquad \forall \mathbf{u}\in U_{ad},
\end{align}
where $\bp^{*}\in \mathbf{H}_{0}(\cu,\Omega)$ solves \eqref{eq:adj_eq} with $\mathbf{u}$ and $\by$ replaced by $\mathbf{u}^{*}$ and $\by^{*} = \mathcal{S}\mathbf{u}^{*}$, respectively. We recall that $\mathcal{P}=\{\Omega_{k}\}_{k=1}^{\ell}$ is the given partition from section \textnormal{\ref{sec:partition_fields}}.
\end{theorem}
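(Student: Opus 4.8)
The plan is to start from the abstract first-order condition \eqref{eq:variational_inequality}, which already holds at any locally optimal $\mathbf{u}^{*}$, and to convert the abstract derivative $j'(\mathbf{u}^{*})$ into the concrete expression of \eqref{eq:var_ineq_with_adj_state} by means of the chain rule and the adjoint state. First I would differentiate the reduced functional $j(\mathbf{u})=\mathcal{J}(\mathcal{S}\mathbf{u},\mathbf{u})$. Writing $\bz:=\mathcal{S}'(\mathbf{u}^{*})\mathbf{h}$ for a direction $\mathbf{h}\in\mathbb{R}^{\ell}$, and recalling that $\mathcal{J}$ is a \emph{real}-valued functional of the \emph{complex} state, differentiating the three quadratic terms of \eqref{eq:cost_function} gives
\[
j'(\mathbf{u}^{*})\mathbf{h} = \mathfrak{Re}\left[(\by^{*}-\by_{\Omega},\bz)_{\Omega} + (\cu\by^{*}-\bE_{\Omega},\cu\bz)_{\Omega}\right] + \alpha\,\mathbf{u}^{*}\cdot\mathbf{h},
\]
where the real part arises because each term $\tfrac12\|\cdot\|^{2}$ is a real quadratic form on the realification of $\brL^{2}(\Omega;\mathbb{C})$. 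The differentiability of $\mathcal{S}$ and the characterization \eqref{eq:first_der_S} of $\bz$ are provided by Theorem \ref{thm:diff_prop_S}.

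The crux is then to eliminate $\bz$ in favor of the adjoint state $\bp^{*}$ solving \eqref{eq:adj_eq}. I would test the adjoint equation \eqref{eq:adj_eq} with $\bw=\overline{\bz}$ and the linearized state equation \eqref{eq:first_der_S} with $\bw=\overline{\bp^{*}}$. The point is that, after unfolding the Hermitian products $(\cdot,\cdot)_{\Omega}$ and using that $\cu$ has real coefficients, the two resulting left-hand sides both equal $\int_{\Omega}\mu^{-1}\cu\bz\cdot\cu\bp^{*}-\omega^{2}\int_{\Omega}(\bsiep\cdot\mathbf{u}^{*})\bz\cdot\bp^{*}$; they coincide by commutativity of the pointwise complex products, even though the underlying sesquilinear form is \emph{not} Hermitian. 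Equating the corresponding right-hand sides yields the identity
\[
\omega^{2}\!\int_{\Omega}(\bsiep\cdot\mathbf{h})\by^{*}\cdot\bp^{*} = \overline{(\by^{*}-\by_{\Omega},\bz)_{\Omega}} + \overline{(\cu\by^{*}-\bE_{\Omega},\cu\bz)_{\Omega}},
\]
whose real part reproduces exactly the bracketed term in the formula for $j'(\mathbf{u}^{*})\mathbf{h}$ above.

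Combining the two displays gives $j'(\mathbf{u}^{*})\mathbf{h}=\alpha\,\mathbf{u}^{*}\cdot\mathbf{h}+\omega^{2}\,\mathfrak{Re}\int_{\Omega}(\bsiep\cdot\mathbf{h})\by^{*}\cdot\bp^{*}$. Finally I would expand $\bsiep\cdot\mathbf{h}=\sum_{k=1}^{\ell}\bsiep|_{\Omega_{k}}\mathbf{h}_{k}$ over the partition $\mathcal{P}$, pull the real scalars $\mathbf{h}_{k}$ out of both the real part and the integral, and then set $\mathbf{h}=\mathbf{u}-\mathbf{u}^{*}$; substituting into \eqref{eq:variational_inequality} delivers \eqref{eq:var_ineq_with_adj_state}. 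The main obstacle is the complex/conjugate bookkeeping: one must verify that the conjugated data on the right-hand side of \eqref{eq:adj_eq} is precisely what makes the two tested bilinear forms cancel, and that the identity closes only after taking real parts, since, the form being non-self-adjoint, the naive choices $\bw=\bz$ and $\bw=\bp^{*}$ would not match.
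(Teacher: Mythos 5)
Your proposal is correct and follows essentially the same route as the paper's own proof: both start from the abstract inequality \eqref{eq:variational_inequality}, identify $j'(\mathbf{u}^{*})$ via the chain rule and the linearized state $\bz=\mathcal{S}'(\mathbf{u}^{*})(\mathbf{u}-\mathbf{u}^{*})$, and then eliminate $\bz$ by testing the adjoint equation \eqref{eq:adj_eq} with $\overline{\bz}$ and the linearized equation \eqref{eq:first_der_S} with $\overline{\bp^{*}}$, which is precisely the paper's identity \eqref{eq:identity_first_order}. Your explicit justification that the two tested left-hand sides coincide by commutativity of the pointwise products (despite the form not being Hermitian) is the same mechanism the paper relies on implicitly.
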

\begin{proof}
A direct calculation reveals that \eqref{eq:variational_inequality} can be rewritten as follows:
\begin{equation}\label{eq:var_ineq_separate}
\mathfrak{Re}\{(\bz_{\mathbf{u} - \mathbf{u}^{*}}, \by^{*} - \by_{\Omega})_{\Omega} + (\cu(\bz_{\mathbf{u} - \mathbf{u}^{*}}), \cu\by^{*} - \mathbf{E}_{\Omega})_{\Omega}\} + \alpha(\mathbf{u}^{*}, \mathbf{u} - \mathbf{u}^{*})_{\mathbb{R}^{\ell}} \geq 0
\end{equation}
for all $\mathbf{u}\in U_{ad}$, where, to simplify the notation, we have defined $\bz_{\mathbf{u} - \mathbf{u}^{*}} := \mathcal{S}'(\mathbf{u}^{*})(\mathbf{u} - \mathbf{u}^{*})$.
We immediately notice that $\bz_{\mathbf{u} - \mathbf{u}^{*}}\in \mathbf{H}_0(\cu,\Omega)$ corresponds to the unique solution to \eqref{eq:first_der_S} with $\mathbf{u}=\mathbf{u}^{*}$, $\by=\by^{*}$, and $\mathbf{h}=\mathbf{u} - \mathbf{u}^{*}$. 
Since $\alpha(\mathbf{u}^{*}, \mathbf{u} - \mathbf{u}^{*})_{\mathbb{R}^{\ell}}$ is already present in \eqref{eq:var_ineq_separate}, we concentrate on the remaining terms. 
Let us use $\bw = \overline{\bz}_{\mathbf{u} - \mathbf{u}^{*}}$ in problem  \eqref{eq:adj_eq} and $\bw=\overline{\bp^{*}}$ in the problem that $\bz_{\mathbf{u} - \mathbf{u}^{*}}$ solves to obtain 
\begin{align}\label{eq:identity_first_order}
&\mathfrak{Re}\{(\bz_{\mathbf{u} - \mathbf{u}^{*}}, \by^{*} - \by_{\Omega})_{\Omega} + (\cu(\bz_{\mathbf{u} - \mathbf{u}^{*}}), \cu\by^{*} - \mathbf{E}_{\Omega})_{\Omega}\} \\
& \qquad \qquad = \omega^{2} \mathfrak{Re}\{(\bsiep\cdot(\mathbf{u} - \mathbf{u}^{*}))\by^{*},\overline{\bp^{*}})_{\Omega}\}. \nonumber
\end{align}
Therefore, using identity \eqref{eq:identity_first_order} in \eqref{eq:var_ineq_separate}, we conclude the desired inequality \eqref{eq:var_ineq_with_adj_state}.
\end{proof}


\subsubsection{Second-order optimality conditions}\label{sec:2nd_opt_condition}

For each $k\in \{1,\ldots,\ell\}$, we define 
$\bar{\mathfrak{d}}_{k}:=\alpha\mathbf{u}^{*}_{k} 
+ \omega^{2} \mathfrak{Re}\{\int_{\Omega_{k}}\bsiep\by^{*}\cdot\bp^{*}\}$. Here, $\mathbf{u}^{*},\by^{*},\bp^{*}$ and $\Omega_{k}$ are given as in the statement of Theorem \ref{thm:first_ord_opt}. We introduce the cone of critical directions at $\mathbf{u}^{*} \in U_{ad}$:
\begin{align}\label{def:critical_cone}
\mathbf{C}_{\mathbf{u}^{*}}:=\{\mathbf{v}\in \mathbb{R}^{\ell}\, \text{ that satisfies } \eqref{eq:sign_cond} \text{ and }  \mathbf{v}_{k} = 0 \text{ if } |\bar{\mathfrak{d}}_{k}| > 0 \},
\end{align}
where condition \eqref{eq:sign_cond} reads, for all $k\in \{1,\ldots,\ell\}$, as follows:
\begin{align}\label{eq:sign_cond}
\mathbf{v}_{k} \geq 0 ~ \text{ if } ~ \mathbf{u}^{*}_{k} = \mathbf{a}_{k}
\quad \text{ and } \quad
\mathbf{v}_{k} \leq 0  ~ \text{ if } ~ \mathbf{u}^{*}_{k} = \mathbf{b}_{k}.
\end{align}
With this set at hand, we present the next result which follows from the standard Karush--Kuhn--Tucker theory of mathematical optimization in finite-dimensional spaces; see, e.g., \cite[Theorem 3.8]{MR3311948} and \cite[Section 6.3]{MR2012832}.

\begin{theorem}[second-order necessary and sufficient optimality conditions]\label{thm:second_necess_suff_opt_cond} 
If $\mathbf{u}^{*} \in U_{ad}$ is a local minimum for problem \eqref{eq:weak_min_problem}--\eqref{eq:weak_st_eq}, then $j''(\mathbf{u}^{*})\mathbf{v}^2 \geq 0$ for all $\mathbf{v}\in \mathbf{C}_{\mathbf{u}^{*}}$. Conversely, if $\mathbf{u}^{*} \in U_{ad}$ satisfies the variational inequality \eqref{eq:var_ineq_with_adj_state} \textnormal{(}equivalently \eqref{eq:variational_inequality}\textnormal{)} and the second-order sufficient condition
\begin{align}
\label{eq:second_order_sufficient}
j''(\mathbf{u}^{*})\mathbf{v}^2 > 0 \quad \forall \mathbf{v}\in \mathbf{C}_{\mathbf{u}^{*}} \setminus \{ \mathbf{0} \},
\end{align}
then there exist $\eta>0$ and $\delta >0$ such that
\begin{align*}\label{eq:quad_growth_new}
\displaystyle j(\mathbf{u})\geq j(\mathbf{u}^{*})+\frac{\eta}{4}\|\mathbf{u}-\mathbf{u}^{*}\|_{\mathbb{R}^\ell}^2
\qquad \forall \mathbf{u}\in U_{ad}: \|\mathbf{u}-\mathbf{u}^{*}\|_{\mathbb{R}^\ell}\leq \delta.
\end{align*}
In particular, $\mathbf{u}^{*} $ is a strict local solution of \eqref{eq:weak_min_problem}--\eqref{eq:weak_st_eq}.
\end{theorem}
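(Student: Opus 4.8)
The plan is to exploit that the control set $U_{ad}\subset\mathbb{R}^{\ell}$ is finite-dimensional and that, by the corollary following Theorem \ref{thm:diff_prop_S}, $j\in C^{\infty}$; the whole argument then reduces to the classical second-order theory of nonlinear programming driven by a Taylor expansion of $j$ about $\mathbf{u}^{*}$. Before either implication I would record the componentwise consequences of \eqref{eq:var_ineq_with_adj_state}: testing with controls that coincide with $\mathbf{u}^{*}$ except in a single coordinate and using the box structure of $U_{ad}$ gives $\bar{\mathfrak{d}}_{k}\geq 0$ when $\mathbf{u}^{*}_{k}=\mathbf{a}_{k}$, $\bar{\mathfrak{d}}_{k}\leq 0$ when $\mathbf{u}^{*}_{k}=\mathbf{b}_{k}$, and $\bar{\mathfrak{d}}_{k}=0$ whenever $\mathbf{a}_{k}<\mathbf{u}^{*}_{k}<\mathbf{b}_{k}$. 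Combined with the sign condition \eqref{eq:sign_cond} and the vanishing requirement in \eqref{def:critical_cone}, this shows that for every $\mathbf{v}\in\mathbf{C}_{\mathbf{u}^{*}}$ each product $\bar{\mathfrak{d}}_{k}\mathbf{v}_{k}$ is nonnegative and, moreover, $j'(\mathbf{u}^{*})\mathbf{v}=\sum_{k}\bar{\mathfrak{d}}_{k}\mathbf{v}_{k}=0$.

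For the necessary part I would fix $\mathbf{v}\in\mathbf{C}_{\mathbf{u}^{*}}$ and note that \eqref{eq:sign_cond} guarantees $\mathbf{u}^{*}+t\mathbf{v}\in U_{ad}$ for all sufficiently small $t>0$. A second-order Taylor expansion yields
\[
j(\mathbf{u}^{*}+t\mathbf{v})=j(\mathbf{u}^{*})+t\,j'(\mathbf{u}^{*})\mathbf{v}+\tfrac{1}{2}t^{2}\,j''(\mathbf{u}^{*})\mathbf{v}^{2}+o(t^{2}).
\]
Since $j'(\mathbf{u}^{*})\mathbf{v}=0$ by the previous paragraph and $\mathbf{u}^{*}$ is a local minimum, dividing $j(\mathbf{u}^{*}+t\mathbf{v})-j(\mathbf{u}^{*})\geq 0$ by $t^{2}$ and letting $t\to 0^{+}$ gives $j''(\mathbf{u}^{*})\mathbf{v}^{2}\geq 0$.

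For the sufficient part I would argue by contradiction, assuming the quadratic growth fails, so that there is a sequence $\{\mathbf{u}_{n}\}\subset U_{ad}$ with $\rho_{n}:=\|\mathbf{u}_{n}-\mathbf{u}^{*}\|_{\mathbb{R}^{\ell}}\to 0$ and $j(\mathbf{u}_{n})<j(\mathbf{u}^{*})+\tfrac{1}{4n}\rho_{n}^{2}$. Normalizing $\mathbf{v}_{n}:=(\mathbf{u}_{n}-\mathbf{u}^{*})/\rho_{n}$, compactness of the unit sphere in $\mathbb{R}^{\ell}$ produces a subsequence with $\mathbf{v}_{n}\to\mathbf{v}$ and $\|\mathbf{v}\|_{\mathbb{R}^{\ell}}=1$. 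Expanding the failure inequality by Taylor, dividing by $\rho_{n}^{2}$, and using $j'(\mathbf{u}^{*})(\mathbf{u}_{n}-\mathbf{u}^{*})\geq 0$ from \eqref{eq:var_ineq_with_adj_state}, I obtain the squeeze
\[
0\leq \rho_{n}^{-1}j'(\mathbf{u}^{*})\mathbf{v}_{n}\leq \tfrac{1}{4n}-\tfrac{1}{2}j''(\mathbf{u}^{*})\mathbf{v}_{n}^{2}+o(1),
\]
whose limit forces $j''(\mathbf{u}^{*})\mathbf{v}^{2}\leq 0$.

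The crux of the argument—and the step I expect to be the main obstacle—is verifying that the limiting direction satisfies $\mathbf{v}\in\mathbf{C}_{\mathbf{u}^{*}}\setminus\{\mathbf{0}\}$. The sign conditions \eqref{eq:sign_cond} are inherited from the admissibility of $\mathbf{u}_{n}$, since, e.g., $\mathbf{u}^{*}_{k}=\mathbf{a}_{k}$ forces $(\mathbf{v}_{n})_{k}\geq 0$ and hence $\mathbf{v}_{k}\geq 0$. For the vanishing of $\mathbf{v}_{k}$ on active coordinates, the boundedness of $\rho_{n}^{-1}j'(\mathbf{u}^{*})\mathbf{v}_{n}$ supplied by the squeeze, together with $\rho_{n}\to 0$, rules out $j'(\mathbf{u}^{*})\mathbf{v}>0$ and thus yields $j'(\mathbf{u}^{*})\mathbf{v}=\sum_{k}\bar{\mathfrak{d}}_{k}\mathbf{v}_{k}=0$; as each summand is nonnegative by the preliminary sign analysis, every product $\bar{\mathfrak{d}}_{k}\mathbf{v}_{k}$ vanishes, giving $\mathbf{v}_{k}=0$ whenever $|\bar{\mathfrak{d}}_{k}|>0$. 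With $\mathbf{v}\in\mathbf{C}_{\mathbf{u}^{*}}\setminus\{\mathbf{0}\}$ in hand, hypothesis \eqref{eq:second_order_sufficient} gives $j''(\mathbf{u}^{*})\mathbf{v}^{2}>0$, contradicting $j''(\mathbf{u}^{*})\mathbf{v}^{2}\leq 0$. This establishes quadratic growth for suitable $\eta>0$ and $\delta>0$, and strict local optimality of $\mathbf{u}^{*}$ is then immediate.
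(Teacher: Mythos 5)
Your proposal is correct, but it is worth noting that the paper does not actually prove this theorem: it states the result as a consequence of standard Karush--Kuhn--Tucker theory in finite-dimensional optimization and refers to \cite[Theorem 3.8]{MR3311948} and \cite[Section 6.3]{MR2012832}. What you have written is, in effect, a self-contained reconstruction of the classical argument behind those citations, and it holds up under scrutiny. Your preliminary componentwise analysis of \eqref{eq:var_ineq_with_adj_state} (namely $\bar{\mathfrak{d}}_{k}\geq 0$ on lower-active, $\bar{\mathfrak{d}}_{k}\leq 0$ on upper-active, $\bar{\mathfrak{d}}_{k}=0$ on inactive coordinates) correctly yields $j'(\mathbf{u}^{*})\mathbf{v}=\sum_{k}\bar{\mathfrak{d}}_{k}\mathbf{v}_{k}=0$ on $\mathbf{C}_{\mathbf{u}^{*}}$, which is what makes the Taylor expansion in the necessity direction conclusive; feasibility of $\mathbf{u}^{*}+t\mathbf{v}$ for small $t>0$ indeed only needs \eqref{eq:sign_cond} and the strict inequality $\mathbf{a}<\mathbf{b}$. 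In the sufficiency direction, the step you flag as the crux is handled correctly: the squeeze shows $\rho_{n}^{-1}j'(\mathbf{u}^{*})\mathbf{v}_{n}$ is bounded, hence $j'(\mathbf{u}^{*})\mathbf{v}_{n}=\rho_{n}\cdot\bigl(\rho_{n}^{-1}j'(\mathbf{u}^{*})\mathbf{v}_{n}\bigr)\to 0$, so $j'(\mathbf{u}^{*})\mathbf{v}=0$; since the inherited sign conditions make every summand $\bar{\mathfrak{d}}_{k}\mathbf{v}_{k}$ nonnegative, each vanishes, placing $\mathbf{v}$ in $\mathbf{C}_{\mathbf{u}^{*}}\setminus\{\mathbf{0}\}$ (it is a unit vector by construction) and producing the contradiction with \eqref{eq:second_order_sufficient}. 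The comparison, then, is one of economy versus transparency: the paper buys brevity by citation, while your argument makes the result self-contained and makes explicit exactly where the finite dimensionality of the control space enters --- compactness of the unit sphere in $\mathbb{R}^{\ell}$ for extracting the limiting direction, and the purely algebraic identification of the critical cone --- which is also the structural reason the paper can later replace \eqref{eq:second_order_sufficient} by the coercivity condition \eqref{eq:second_order_equivalent} of Theorem \ref{thm:equivalent_opt_cond}.
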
 

In order to provide error estimates for solutions of problem \eqref{eq:weak_min_problem}--\eqref{eq:weak_st_eq}, we shall use an equivalent condition to \eqref{eq:second_order_sufficient} which follows directly of our finite dimensional setting for the control variable. To present it, we introduce, for $\tau > 0$, the cone 
\begin{equation}\label{def:cone_kappa}
\mathbf{C}_{\mathbf{u}^{*}}^{\tau}:=\{\mathbf{v}\in \mathbb{R}^{\ell} \text{ that satisfies } \eqref{eq:sign_cond} \text{ and } \eqref{eq:v_i_tau} \},
\end{equation}
where, for $k \in\{1,\ldots,\ell\}$, condition \eqref{eq:v_i_tau} reads as follows:
\begin{equation}\label{eq:v_i_tau}
 |\bar{\mathfrak{d}}_k| > \tau \implies \mathbf{v}_{k} = 0.
\end{equation}

\begin{theorem}[equivalent condition]\label{thm:equivalent_opt_cond}
Let $\mathbf{u}^{*} \in U_{ad}$ be such that it satisfies the variational inequality \eqref{eq:var_ineq_with_adj_state} \textnormal{(}equivalently \eqref{eq:variational_inequality}\textnormal{)}. Then, \eqref{eq:second_order_sufficient} is equivalent to
\begin{equation}\label{eq:second_order_equivalent}
\exists\tau,\nu >0: \quad j''(\mathbf{u}^{*})\mathbf{v}^2  \geq \nu \|\mathbf{v}\|_{\mathbb{R}^{\ell}}^2 \quad \forall \mathbf{v} \in \mathbf{C}_{\mathbf{u}^{*}}^\tau.
\end{equation}
\end{theorem}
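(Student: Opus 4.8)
The plan is to prove the two implications separately, with essentially all the content residing in the direction $\eqref{eq:second_order_sufficient}\Rightarrow\eqref{eq:second_order_equivalent}$, which is where the finite dimension of the control space is decisive. The reverse implication $\eqref{eq:second_order_equivalent}\Rightarrow\eqref{eq:second_order_sufficient}$ is immediate and I would dispatch it first: since the defining condition \eqref{eq:v_i_tau} of $\mathbf{C}_{\mathbf{u}^{*}}^{\tau}$ forces to zero only those coordinates $k$ with $|\bar{\mathfrak{d}}_{k}|>\tau$, whereas \eqref{def:critical_cone} forces to zero all coordinates with $|\bar{\mathfrak{d}}_{k}|>0$, we have the inclusion $\mathbf{C}_{\mathbf{u}^{*}}\subset\mathbf{C}_{\mathbf{u}^{*}}^{\tau}$ for every $\tau>0$. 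Hence applying \eqref{eq:second_order_equivalent} to any $\mathbf{v}\in\mathbf{C}_{\mathbf{u}^{*}}\setminus\{\mathbf{0}\}$ gives $j''(\mathbf{u}^{*})\mathbf{v}^{2}\geq\nu\|\mathbf{v}\|_{\mathbb{R}^{\ell}}^{2}>0$, which is exactly \eqref{eq:second_order_sufficient}.

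For the converse I would argue by contradiction. Assuming \eqref{eq:second_order_sufficient} holds but \eqref{eq:second_order_equivalent} fails, I take $\tau=\nu=1/n$ for each $n\in\mathbb{N}$ to obtain $\mathbf{v}_{n}\in\mathbf{C}_{\mathbf{u}^{*}}^{1/n}$ with $j''(\mathbf{u}^{*})\mathbf{v}_{n}^{2}<\tfrac{1}{n}\|\mathbf{v}_{n}\|_{\mathbb{R}^{\ell}}^{2}$. Because $\mathbf{C}_{\mathbf{u}^{*}}^{1/n}$ is a cone and the inequality is homogeneous of degree two in $\mathbf{v}_{n}$, I may normalize so that $\|\mathbf{v}_{n}\|_{\mathbb{R}^{\ell}}=1$, whence $j''(\mathbf{u}^{*})\mathbf{v}_{n}^{2}<1/n$. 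By compactness of the unit sphere in $\mathbb{R}^{\ell}$, a subsequence (still denoted $\{\mathbf{v}_{n}\}$) converges to some $\mathbf{v}$ with $\|\mathbf{v}\|_{\mathbb{R}^{\ell}}=1$, so in particular $\mathbf{v}\neq\mathbf{0}$.

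The crux is to identify this limit as an element of the \emph{smaller} cone $\mathbf{C}_{\mathbf{u}^{*}}$. The sign constraints \eqref{eq:sign_cond} pass to the limit because they are defined by closed inequalities depending only on the fixed datum $\mathbf{u}^{*}$. For the remaining constraint I would fix an index $k$ with $|\bar{\mathfrak{d}}_{k}|>0$ and take $n$ large enough that $1/n<|\bar{\mathfrak{d}}_{k}|$; then $|\bar{\mathfrak{d}}_{k}|>1/n$, so \eqref{eq:v_i_tau} forces $(\mathbf{v}_{n})_{k}=0$, and letting $n\to\infty$ yields $\mathbf{v}_{k}=0$. Therefore $\mathbf{v}\in\mathbf{C}_{\mathbf{u}^{*}}\setminus\{\mathbf{0}\}$. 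Finally, since $j$ is of class $C^{\infty}$, the map $\mathbf{v}\mapsto j''(\mathbf{u}^{*})\mathbf{v}^{2}$ is a fixed continuous quadratic form on $\mathbb{R}^{\ell}$, so passing to the limit in $j''(\mathbf{u}^{*})\mathbf{v}_{n}^{2}<1/n$ gives $j''(\mathbf{u}^{*})\mathbf{v}^{2}\leq 0$, contradicting \eqref{eq:second_order_sufficient}.

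The only delicate point is precisely this last identification: one must ensure that the limiting direction lands in $\mathbf{C}_{\mathbf{u}^{*}}$ and not merely in some $\mathbf{C}_{\mathbf{u}^{*}}^{\tau}$ with $\tau>0$. This is where finite-dimensionality does the work twice over: it guarantees that the normalized sequence has a convergent subsequence, and it ensures that the finitely many active-set conditions stabilize as $\tau=1/n\downarrow 0$, so that the nested cones satisfy $\bigcap_{\tau>0}\mathbf{C}_{\mathbf{u}^{*}}^{\tau}=\mathbf{C}_{\mathbf{u}^{*}}$. In an infinite-dimensional control setting this step would fail in general, and one would need additional structural assumptions; here it is automatic.
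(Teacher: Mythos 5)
Your proof is correct. Note, however, that the paper itself offers no argument for this theorem: it is stated bare, with the preceding sentence asserting that the equivalence ``follows directly of our finite dimensional setting,'' and the only pointers are the standard references cited for Theorem \ref{thm:second_necess_suff_opt_cond}. So there is nothing in the paper to compare against line by line; what you have supplied is the standard compactness argument that the authors implicitly invoke, and it is complete. The easy direction is handled correctly via the inclusion $\mathbf{C}_{\mathbf{u}^{*}}\subset\mathbf{C}_{\mathbf{u}^{*}}^{\tau}$, which holds because \eqref{eq:v_i_tau} constrains fewer coordinates than the condition $|\bar{\mathfrak{d}}_{k}|>0$ in \eqref{def:critical_cone}. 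In the contradiction argument, the normalization step is legitimate (the strict inequality $j''(\mathbf{u}^{*})\mathbf{v}_{n}^{2}<\tfrac{1}{n}\|\mathbf{v}_{n}\|_{\mathbb{R}^{\ell}}^{2}$ already forces $\mathbf{v}_{n}\neq\mathbf{0}$, and both the cone membership and the inequality are invariant under positive scaling), the passage of the sign conditions \eqref{eq:sign_cond} to the limit is correct since they are closed constraints, and your treatment of the active indices — fixing $k$ with $|\bar{\mathfrak{d}}_{k}|>0$ and using that $(\mathbf{v}_{n})_{k}=0$ for all $n$ beyond $1/n<|\bar{\mathfrak{d}}_{k}|$ — is exactly where the finitely many, $h$-independent thresholds $|\bar{\mathfrak{d}}_{k}|$ make the cones stabilize, i.e. $\bigcap_{\tau>0}\mathbf{C}_{\mathbf{u}^{*}}^{\tau}=\mathbf{C}_{\mathbf{u}^{*}}$. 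Your closing remark is also the right diagnosis: continuity of the fixed quadratic form $\mathbf{v}\mapsto j''(\mathbf{u}^{*})\mathbf{v}^{2}$ and sequential compactness of the unit sphere are both consequences of $\ell<\infty$, and this is precisely the content behind the authors' one-line justification.
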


We end this section with a result that will be useful for proving error estimates. 

\begin{proposition}[$j''$ is locally Lipschitz]
Let $\mathbf{u}_{1},\mathbf{u}_{2}\in U_{ad}$ and $\mathbf{h}\in\mathbb{R}^{\ell}$. Then, we have the following estimate:
\begin{align}\label{eq:Lipschitz_property}
|j''(\mathbf{u}_{1})\mathbf{h}^{2} - j''(\mathbf{u}_{2})\mathbf{h}^{2}| \leq C_{L} \| \mathbf{u}_{1} - \mathbf{u}_{2}\|_{\mathbb{R}^{\ell}}\|\mathbf{h}\|_{\mathbb{R}^{\ell}}^{2},
\end{align}
where $C_{L}>0$ denotes a constant depending only on the problem data.
\end{proposition}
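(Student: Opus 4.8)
The plan is to first reduce $j''(\mathbf{u})\mathbf{h}^2$ to a closed-form expression involving only first-order quantities, and then estimate the difference term by term using uniform stability bounds and the linear way in which the control enters the coefficient. Differentiating $j(\mathbf{u})=\mathcal{J}(\mathcal{S}\mathbf{u},\mathbf{u})$ twice and inserting the characterizations of $\mathcal{S}'$ and $\mathcal{S}''$ from Theorem \ref{thm:diff_prop_S}, one gets, writing $\by=\mathcal{S}\mathbf{u}$, $\bz_{\mathbf{h}}=\mathcal{S}'(\mathbf{u})\mathbf{h}$, and $\boldsymbol\zeta=\mathcal{S}''(\mathbf{u})\mathbf{h}^2$,
\begin{align*}
j''(\mathbf{u})\mathbf{h}^2 = \|\bz_{\mathbf{h}}\|_{\Omega}^2 + \|\cu \bz_{\mathbf{h}}\|_{\Omega}^2 + \alpha\|\mathbf{h}\|_{\mathbb{R}^{\ell}}^2 + \mathfrak{Re}\{(\boldsymbol\zeta,\by-\by_{\Omega})_{\Omega} + (\cu\boldsymbol\zeta,\cu\by-\mathbf{E}_{\Omega})_{\Omega}\}.
\end{align*}
To eliminate the second-order sensitivity $\boldsymbol\zeta$ I would test the adjoint equation \eqref{eq:adj_eq} with $\bw=\overline{\boldsymbol\zeta}$ and equation \eqref{eq:second_der_S} (with $\mathbf{h}_1=\mathbf{h}_2=\mathbf{h}$) with $\bw=\overline{\bp}$; because the underlying sesquilinear form carries only scalar coefficients, its two left-hand sides coincide, which yields the more convenient representation
\begin{align*}
j''(\mathbf{u})\mathbf{h}^2 = \|\bz_{\mathbf{h}}\|_{\Omega}^2 + \|\cu \bz_{\mathbf{h}}\|_{\Omega}^2 + \alpha\|\mathbf{h}\|_{\mathbb{R}^{\ell}}^2 + 2\omega^2\,\mathfrak{Re}\{((\bsiep\cdot\mathbf{h})\bz_{\mathbf{h}},\overline{\bp})_{\Omega}\},
\end{align*}
involving only the first-order quantities $\by$, $\bz_{\mathbf{h}}$, and $\bp$.

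The second step is to establish Lipschitz dependence on the control of each of these quantities. Writing $\by_i=\mathcal{S}\mathbf{u}_i$, $\bz_i=\mathcal{S}'(\mathbf{u}_i)\mathbf{h}$, and $\bp_i$ for the associated adjoint states ($i=1,2$), I would subtract the defining equations pairwise. The key algebraic observation is the splitting $(\bsiep\cdot\mathbf{u}_1)\bv_1 - (\bsiep\cdot\mathbf{u}_2)\bv_2 = (\bsiep\cdot\mathbf{u}_1)(\bv_1-\bv_2) + (\bsiep\cdot(\mathbf{u}_1-\mathbf{u}_2))\bv_2$, so that each difference solves a problem of the same form \eqref{eq:weak_st_eq} governed by $\mathbf{u}_1$, with a right-hand side linear in $\mathbf{u}_1-\mathbf{u}_2$. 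Invoking the well-posedness of \eqref{eq:weak_st_eq}, whose stability constant is uniform since $U_{ad}$ is compact and contained in $\mathbf{U}$ (so all coefficients stay bounded and bounded away from degeneracy), yields successively $\|\by_1-\by_2\|_{\mathbf{H}(\cu,\Omega)}\lesssim \|\mathbf{u}_1-\mathbf{u}_2\|_{\mathbb{R}^{\ell}}$, then $\|\bz_1-\bz_2\|_{\mathbf{H}(\cu,\Omega)}\lesssim \|\mathbf{u}_1-\mathbf{u}_2\|_{\mathbb{R}^{\ell}}\|\mathbf{h}\|_{\mathbb{R}^{\ell}}$, and finally $\|\bp_1-\bp_2\|_{\mathbf{H}(\cu,\Omega)}\lesssim \|\mathbf{u}_1-\mathbf{u}_2\|_{\mathbb{R}^{\ell}}$, each estimate feeding the next. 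Along the way one records the uniform a priori bounds $\|\by_i\|_{\mathbf{H}(\cu,\Omega)}\lesssim\|\bbf\|_{\Omega}$, $\|\bz_i\|_{\mathbf{H}(\cu,\Omega)}\lesssim\|\mathbf{h}\|_{\mathbb{R}^{\ell}}$, and $\|\bp_i\|_{\mathbf{H}(\cu,\Omega)}\lesssim 1$ from \eqref{eq:stab_adj_eq}.

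Finally, I would substitute the closed-form expression into $j''(\mathbf{u}_1)\mathbf{h}^2 - j''(\mathbf{u}_2)\mathbf{h}^2$; the $\alpha\|\mathbf{h}\|_{\mathbb{R}^{\ell}}^2$ terms cancel, and the remaining differences telescope. The quadratic terms are controlled via $|\|\bz_1\|_{\Omega}^2-\|\bz_2\|_{\Omega}^2|\leq(\|\bz_1\|_{\Omega}+\|\bz_2\|_{\Omega})\|\bz_1-\bz_2\|_{\Omega}$ and the analogous bound for the curl, while the bilinear term is split as $((\bsiep\cdot\mathbf{h})(\bz_1-\bz_2),\overline{\bp_1})_{\Omega} + ((\bsiep\cdot\mathbf{h})\bz_2,\overline{\bp_1}-\overline{\bp_2})_{\Omega}$ and estimated by Cauchy--Schwarz together with $\|\bsiep\|_{\rL^{\infty}}$. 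Combining these with the boundedness and Lipschitz estimates of the previous paragraph produces the claim with $C_L$ depending only on $\omega$, $\alpha$, the bounds on $\bsiep$ and $\mu$, the data $\bbf$, $\by_\Omega$, $\mathbf{E}_{\Omega}$, and the uniform stability constant. I expect the main obstacle to be organizational rather than conceptual: keeping the complex conjugations and real parts consistent throughout, and in particular justifying the symmetry of the sesquilinear form that legitimizes the elimination of $\boldsymbol\zeta$.
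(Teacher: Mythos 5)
Your proposal is correct and takes essentially the same route as the paper's proof: the identical closed-form characterization of $j''(\mathbf{u})\mathbf{h}^2$, obtained by eliminating $\mathcal{S}''(\mathbf{u})\mathbf{h}^2$ through testing \eqref{eq:adj_eq} with $\overline{\boldsymbol\zeta}$ and \eqref{eq:second_der_S} with $\overline{\bp}$, followed by the same decomposition of $[j''(\mathbf{u}_1)-j''(\mathbf{u}_2)]\mathbf{h}^2$ into two bilinear and two quadratic differences, each controlled by the same chain of stability estimates for $\by_1-\by_2$, $\bz_1-\bz_2$, and $\bp_1-\bp_2$. The only difference is organizational (you front-load the Lipschitz estimates, the paper derives them as needed), so there is nothing to flag.
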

\begin{proof}
We proceed on the basis of two steps. 

\underline{Step 1.} (characterization of $j''$) Let  $\mathbf{u}\in U_{ad}$ and $\mathbf{h}\in \mathbb{R}^{\ell}$. We start with a simple calculation and obtain that
\begin{align}\label{eq:charac_j2_prev}
j''(\mathbf{u})\mathbf{h}^2
&=
\alpha\|\mathbf{h}\|_{\mathbb{R}^{\ell}}^2
+
\|\bz\|_{\Omega}^{2} + \|\cu \bz\|_{\Omega}^{2} \\
& \qquad \qquad + \mathfrak{Re}\{
(\boldsymbol\zeta, \mathcal{S}\mathbf{u} - \mathbf{y}_{\Omega})_{\Omega} +  (\cu(\boldsymbol\zeta), \cu(\mathcal{S}\mathbf{u}) - \mathbf{E}_{\Omega})_{\Omega} \}, \nonumber
\end{align}
where $\bz = \mathcal{S}'(\mathbf{u})\mathbf{h} \in \mathbf{H}_0(\cu,\Omega)$ and $\boldsymbol\zeta = \mathcal{S}''(\mathbf{u})\mathbf{h}^2  \in \mathbf{H}_0(\cu,\Omega)$ solve \eqref{eq:first_der_S} and \eqref{eq:second_der_S}, respectively. We now set $\bw = \overline{\boldsymbol\zeta}$ in \eqref{eq:adj_eq} and $\bw=\overline{\bp}$ in \eqref{eq:second_der_S} to obtain
\[
\mathfrak{Re}\{
(\boldsymbol\zeta, \mathcal{S}\mathbf{u} - \mathbf{y}_{\Omega})_{\Omega} +  (\cu(\boldsymbol\zeta), \cu(\mathcal{S}\mathbf{u}) - \mathbf{E}_{\Omega})_{\Omega} \}
=
\mathfrak{Re}\{2\omega^{2}((\bsiep\cdot\mathbf{h}) \bz,\overline{\bp})_{\Omega}\}.
\]
Replacing the previous identity in \eqref{eq:charac_j2_prev} results in
\begin{equation}\label{eq:charac_j''}
j''(\mathbf{u})\mathbf{h}^2
=
\alpha\|\mathbf{h}\|_{\mathbb{R}^{\ell}}^2
+
\mathfrak{Re}\{2\omega^{2}((\bsiep\cdot\mathbf{h}) \bz,\overline{\bp})_{\Omega}\}
+
\|\bz\|_{\Omega}^{2}
+ 
 \|\cu \bz\|_{\Omega}^{2}.
\end{equation}

\underline{Step 2.} (estimate \eqref{eq:Lipschitz_property}) Let $\mathbf{u}_1,\mathbf{u}_2 \in U_{ad}$ and $\mathbf{h}\in \mathbb{R}^{\ell}$. Define $\bz_{1}= \mathcal{S}'(\mathbf{u}_{1})\mathbf{h}$ and $\bz_{2} = \mathcal{S}'(\mathbf{u}_{2})\mathbf{h}$. In view of the characterization \eqref{eq:charac_j''}, we obtain
\begin{multline*}
[j''(\mathbf{u}_{1}) - j''(\mathbf{u}_{2})]\mathbf{h}^2
=
\mathfrak{Re}\{2\omega^{2} ((\bsiep \cdot \mathbf{h}) (\bz_{1} - \bz_{2}),\overline{\bp}_{1})_{\Omega}\}
+
\mathfrak{Re}\{2\omega^{2} ((\bsiep \cdot \mathbf{h})\bz_{2},\overline{\bp}_{1} - \overline{\bp}_{2})_{\Omega}\}\\
+ 
[\|\bz_{1}\|_{\Omega}^{2} - \|\bz_{2}\|_{\Omega}^2]
+
[\|\cu \bz_{1}\|_{\Omega}^{2} - \|\cu \bz_{2}\|_{\Omega}^2]
=: \mathbf{I} + \mathbf{II} + \mathbf{III} + \mathbf{IV},
\end{multline*}
where $\bp_{i}$ ($i\in\{1,2\}$) denotes the solution to \eqref{eq:adj_eq} with $\by$ and $\mathbf{u}$ replaced by $\by_{i} = \mathcal{S}\mathbf{u}_i$ and  $\mathbf{u}_{i}$, respectively. We bound each term on the right-hand side of the latter identity. 

The use of an elemental inequality in combination with the stability estimate \eqref{eq:stab_adj_eq} for $\bp_{1}$ yields the estimation
\begin{align*}
|\mathbf{I}| 
\lesssim 
\|\mathbf{h}\|_{\mathbb{R}^{\ell}} \|\bsiep\|_{\mathrm{L}^{\infty}(\Omega;\mathbb{C})} \|\bz_{1} - \bz_{2}\|_{\Omega}
\|\bp_{1}\|_{\Omega}
\lesssim
\|\mathbf{h}\|_{\mathbb{R}^{\ell}}\|\bz_{1} - \bz_{2}\|_{\mathbf{H}_0(\cu,\Omega)}.
\end{align*}
Hence, it suffices to bound $\|\bz_{1} - \bz_{2}\|_{\mathbf{H}_0(\cu,\Omega)}$. Note that $\bz_{1} - \bz_{2} \in \mathbf{H}_0(\cu,\Omega)$ corresponds to the solution of
\begin{align*}
&(\mu^{-1}\cu (\bz_{1} - \bz_{2}), \cu \bw)_{\Omega} - \omega^{2}((\bsiep\cdot\mathbf{u}_{1})(\bz_{1} - \bz_{2}),\bw)_{\Omega} \\ 
& \qquad \qquad = \omega^{2}((\bsiep\cdot\mathbf{h}) (\by_{1} - \by_{2}),\bw)_{\Omega} +\omega^{2}((\bsiep\cdot(\mathbf{u}_{1} - \mathbf{u}_{2}))\bz_{2},\bw)_{\Omega}
\end{align*}
for all $\bw\in \mathbf{H}_{0}(\cu,\Omega)$. A stability estimate allows us to obtain 
\begin{align*}
\|\bz_{1} - \bz_{2}\|_{\mathbf{H}_0(\cu,\Omega)}
\lesssim
\|\mathbf{h}\|_{\mathbb{R}^{\ell}}\|\by_{1} - \by_{2}\|_{\Omega} + \|\bz_{2}\|_{\Omega}\|\mathbf{u}_{1} - \mathbf{u}_{2}\|_{\mathbb{R}^{\ell}}.
\end{align*}
We control $\|\bz_{2}\|_{\Omega}$ in view of the stability estimate $\|\bz_{2}\|_{\Omega} \leq \|\bz_{2}\|_{\mathbf{H}_0(\cu,\Omega)} \lesssim \|\mathbf{h}\|_{\mathbb{R}^{\ell}}$. The term $\|\by_{1} - \by_{2}\|_{\Omega}$ is bounded as follows:
\begin{equation}\label{eq:estimate_y_1_y_2}
\|\by_{1} - \by_{2}\|_{\Omega}
\leq 
\|\by_{1} - \by_{2}\|_{\mathbf{H}_0(\cu,\Omega)}
\lesssim
\|\by_{2}\|_{\Omega}
\|\mathbf{u}_{1} - \mathbf{u}_{2}\|_{\mathbb{R}^{\ell}}
\lesssim
\|\bbf\|_{\Omega}
\|\mathbf{u}_{1} - \mathbf{u}_{2}\|_{\mathbb{R}^{\ell}}.
\end{equation}
We thus conclude that 
\begin{align}\label{eq:diff_z1_z2}
\|\bz_{1} - \bz_{2}\|_{\mathbf{H}_0(\cu,\Omega)}\lesssim \|\mathbf{u}_1-\mathbf{u}_2\|_{\mathbb{R}^{\ell}}\|\mathbf{h}\|_{\mathbb{R}^{\ell}},
\end{align}
and, consequently $|\mathbf{I}| \lesssim \|\mathbf{u}_1-\mathbf{u}_2\|_{\mathbb{R}^{\ell}}\|\mathbf{h}\|_{\mathbb{R}^{\ell}}^2$. The control of $\mathbf{II}$ follows similar arguments. In fact, in view of the estimate $\|\bz_{2}\|_{\Omega} \lesssim \|\mathbf{h}\|_{\mathbb{R}^{\ell}}$, we obtain
\begin{align*}
|\mathbf{II}| 
\lesssim
\|\mathbf{h}\|_{\mathbb{R}^{\ell}} \|\bsiep\|_{\mathbf{L}^{\infty}(\Omega;\mathbb{C})} \|\bz_{2}\|_{\Omega} \|\bp_{1} - \bp_{2}\|_{\Omega} 
\lesssim
\|\mathbf{h}\|_{\mathbb{R}^{\ell}}^2 \|\bp_{1} - \bp_{2}\|_{\mathbf{H}_0(\cu,\Omega)}.
\end{align*}
The term $\|\bp_{1} - \bp_{2}\|_{\mathbf{H}_0(\cu,\Omega)}$ is controlled as follows:
\begin{align*}
\|\bp_{1} - \bp_{2}\|_{\mathbf{H}_0(\cu,\Omega)} 
\lesssim
\|\by_{1} - \by_{2}\|_{\mathbf{H}_0(\cu,\Omega)} + \|\bp_{2}\|_{\Omega}\|\mathbf{u}_{1} - \mathbf{u}_{2}\|_{\mathbb{R}^{\ell}}
\lesssim
\|\mathbf{u}_{1} - \mathbf{u}_{2}\|_{\mathbb{R}^{\ell}},
\end{align*}
upon using estimate \eqref{eq:estimate_y_1_y_2} and the stability estimate \eqref{eq:stab_adj_eq} for $\bp_{2}$. To control $\mathbf{III}$, we use the bounds $\|\bz_{1}\|_{\Omega} \lesssim \|\mathbf{h}\|_{\mathbb{R}^{\ell}}$, $\|\bz_{2}\|_{\Omega} \lesssim \|\mathbf{h}\|_{\mathbb{R}^{\ell}}$, and \eqref{eq:diff_z1_z2}, to arrive at 
\begin{equation*}\label{eq:estimation_of_III}
|\mathbf{III}| 
\lesssim 
\|\bz_{1} - \bz_{2}\|_{\Omega}\|\bz_{1} + \bz_{2}\|_{\Omega}
\lesssim
\|\mathbf{u}_{1} - \mathbf{u}_{2}\|_{\mathbb{R}^{\ell}}\|\mathbf{h}\|_{\mathbb{R}^{\ell}}^2.
\end{equation*}
Finally, to estimate the term $\mathbf{IV}$, we use the bound \eqref{eq:diff_z1_z2}, $\|\bz_{1}\|_{\mathbf{H}_0(\cu,\Omega)} \lesssim \|\mathbf{h}\|_{\mathbb{R}^{\ell}}$, and $\|\bz_{2}\|_{\mathbf{H}_0(\cu,\Omega)} \lesssim \|\mathbf{h}\|_{\mathbb{R}^{\ell}}$. These arguments yield
\begin{equation*}\label{eq:estimation_of_IV}
|\mathbf{IV}| 
\lesssim 
\|\mathbf{curl}(\bz_{1} - \bz_{2})\|_{\Omega}\|\mathbf{curl}(\bz_{1} + \bz_{2})\|_{\Omega}
\lesssim
\|\mathbf{u}_{1} - \mathbf{u}_{2}\|_{\mathbb{R}^{\ell}}\|\mathbf{h}\|_{\mathbb{R}^{\ell}}^2.
\end{equation*}

The desired bound \eqref{eq:Lipschitz_property} follows from the identity $[j''(\mathbf{u}_{1}) - j''(\mathbf{u}_{2})]\mathbf{h}^2 = \mathbf{I} + \mathbf{II} + \mathbf{III} + \mathbf{IV}$ and a collection of the estimates obtained for $\mathbf{I}$, $\mathbf{II}$, $\mathbf{III}$, and $\mathbf{IV}$.
\end{proof}


\section{Finite element approximation}
\label{sec:fem_for_ocp}
To approximate the optimal control problem \eqref{eq:weak_min_problem}--\eqref{eq:weak_st_eq}, we propose the following discrete problem: Find $\min \mathcal{J}(\by_{h},\mathbf{u}_{h})$, with $(\by_{h},\mathbf{u}_{h})\in  \mathbf{V}(\mathscr{T}_{h})\times U_{ad}$, subject to 
\begin{equation}
\label{eq:discrete_state_equation}
(\mu^{-1}\cu \by_{h}, \cu \bw_{h})_{\Omega} -\omega^{2}((\bsiep\cdot \mathbf{u}_{h}) \by_{h},\bw_{h})_{\Omega} = (\bbf,\bw_{h})_{\Omega} \quad \forall \bw_{h}\in \mathbf{V}(\mathscr{T}_{h}).
\end{equation}
We recall that $\mathbf{V}(\mathscr{T}_{h})$ is defined as in \eqref{def:discrete_space}. 

Let us introduce the discrete control to state mapping $\mathcal{S}_{h}:  \mathbf{U} \ni \mathbf{u}_h \mapsto \by_h \in \mathbf{V}(\mathscr{T}_{h})$, where $\by_h$ solves \eqref{eq:discrete_state_equation}. In view of Lax-Milgram lemma, we have that $\mathcal{S}_{h}$ is continuous. We also introduce the discrete reduced cost function $j_{h}(\mathbf{u}_{h}):=\mathcal{J}(\mathcal{S}_{h}\mathbf{u}_{h},\mathbf{u}_{h})$. 

The existence of optimal solutions follows from the compactness of $U_{ad}$ and the continuity of $j_{h}$.  As in the continuous case, we characterize local optimal solutions through a discrete first-order optimality condition:  If $\mathbf{u}^{*}_{h}$ denotes a discrete local solution, then $j_{h}^{\prime}(\mathbf{u}^{*}_{h})(\mathbf{u} - \mathbf{u}^{*}_{h}) \geq  0$ for all $\mathbf{u} \in U_{ad}$. Following the arguments developed in the proof of Theorem \ref{thm:first_ord_opt}, we can rewrite the latter inequality as follows:
\begin{align}\label{eq:discrete_var_ineq}
\sum_{k=1}^{\ell}\left(\alpha(\mathbf{u}^{*}_{h})_{k} + \omega^{2} \mathfrak{Re}\left\{\int_{\Omega_{k}}\bsiep\by^{*}_{h}\cdot\bp^{*}_{h}\right\}\right)(\mathbf{u}_{k} - (\mathbf{u}^{*}_{h})_{k}) \geq 0 \qquad \forall \mathbf{u}\in U_{ad},
\end{align}
where $\by^{*}_{h}=\mathcal{S}_{h}\mathbf{u}^{*}_{h}$, and $\bp^{*}_{h} \in\mathbf{V}(\mathscr{T}_{h})$ solves the discrete adjoint problem 
\begin{align}\label{eq:discrete_adjoint_equation}
&(\mu^{-1}\cu \bp^{*}_{h}, \cu \bw_{h})_{\Omega}  -\omega^2((\bsiep\cdot \mathbf{u}^{*}_{h})\bp^{*}_{h},\bw_{h})_{\Omega} \\ 
& \qquad \qquad = (\overline{\by^{*}_{h} - \by_{\Omega}},\bw_{h})_{\Omega} + (\overline{\cu \by^{*}_{h} - \bE_{\Omega}}, \cu \bw_{h})_{\Omega} \quad \forall \bw_{h} \in \mathbf{V}(\mathscr{T}_{h}), \nonumber
\end{align}
whose well-posedness follows from the Lax-Milgram lemma. 


\subsection{Convergence of the discretization}

In order to prove convergence properties of our discrete solutions, we shall consider the following assumption:
\begin{align}\label{eq:assumption_f_and_PW}
\bbf\in \mathbf{H}(\textnormal{div},\Omega) \quad \text{and} \quad \mu,\bsiep\in P\rW^{1,\infty}(\Omega).
\end{align}

\begin{lemma}[error estimate]
\label{lemma:error_estim_st}
Let $\mathbf{u},\mathbf{u}_{h}\in U_{ad}$ and let $\by\in\bH_0(\cu,\Omega)$ and $\by_{h}\in \mathbf{V}(\mathscr{T}_{h})$ be the unique solutions to \eqref{eq:weak_st_eq} and \eqref{eq:discrete_state_equation}, respectively. 
If assumption \eqref{eq:assumption_f_and_PW} holds, then we have
\begin{align}\label{eq:Lipschitz_discrete}
\|\by - \by_{h} \|_{\mathbf{H}_0(\cu,\Omega)} \lesssim h^{s} + \| \mathbf{u} - \mathbf{u}_{h}\|_{\mathbb{R}^{\ell}},
\end{align}
where $s\in [0,\mathfrak{t})$ is given as in Theorem \ref{thm:extra_reg_Maxwell}. Moreover, if $\mathbf{u}_{h} \to \mathbf{u}$ in $\mathbb{R}^{\ell}$ as $h\downarrow 0$, then $j(\mathbf{u}) = \lim_{h\to 0}j_{h}(\mathbf{u}_{h})$.
\end{lemma}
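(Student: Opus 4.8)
The plan is to split the total error into a pure discretization contribution, controlled in $h$, and a control-perturbation contribution, controlled in $\|\mathbf{u}-\mathbf{u}_{h}\|_{\mathbb{R}^{\ell}}$, by inserting the auxiliary discrete state $\mathcal{S}_{h}\mathbf{u}\in\mathbf{V}(\mathscr{T}_{h})$, i.e.\ the solution of \eqref{eq:discrete_state_equation} computed with the \emph{continuous} control $\mathbf{u}$ in place of $\mathbf{u}_{h}$. Writing $\by_{h}=\mathcal{S}_{h}\mathbf{u}_{h}$ and $\by=\mathcal{S}\mathbf{u}$, the triangle inequality gives
\[
\|\by-\by_{h}\|_{\mathbf{H}_{0}(\cu,\Omega)}\leq \|\by-\mathcal{S}_{h}\mathbf{u}\|_{\mathbf{H}_{0}(\cu,\Omega)}+\|\mathcal{S}_{h}\mathbf{u}-\mathcal{S}_{h}\mathbf{u}_{h}\|_{\mathbf{H}_{0}(\cu,\Omega)}.
\]
Since $\mathbf{u}$ is held fixed in the first term, $\by$ and $\mathcal{S}_{h}\mathbf{u}$ are precisely the continuous and discrete solutions of the model problem \eqref{eq:weak_eq}--\eqref{eq:discrete_eq} associated with the control $\mathbf{u}$ and source $\bbf$. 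Assumption \eqref{eq:assumption_f_and_PW} guarantees that condition \textnormal{(i)} of Theorem \ref{thm:extra_reg_Maxwell} holds, so Theorem \ref{thm:error_estimate} yields $\|\by-\mathcal{S}_{h}\mathbf{u}\|_{\mathbf{H}_{0}(\cu,\Omega)}\lesssim h^{s}\|\bbf\|_{\mathbf{H}(\textnormal{div},\Omega)}\lesssim h^{s}$.

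For the second term I would set $\mathbf{e}:=\mathcal{S}_{h}\mathbf{u}-\mathcal{S}_{h}\mathbf{u}_{h}$ and subtract the two copies of \eqref{eq:discrete_state_equation} written with controls $\mathbf{u}$ and $\mathbf{u}_{h}$. After adding and subtracting $\omega^{2}((\bsiep\cdot\mathbf{u})\by_{h},\bw_{h})_{\Omega}$, the difference reduces to the discrete problem
\[
(\mu^{-1}\cu\mathbf{e},\cu\bw_{h})_{\Omega}-\omega^{2}((\bsiep\cdot\mathbf{u})\mathbf{e},\bw_{h})_{\Omega}=\omega^{2}((\bsiep\cdot(\mathbf{u}-\mathbf{u}_{h}))\by_{h},\bw_{h})_{\Omega}\quad\forall\,\bw_{h}\in\mathbf{V}(\mathscr{T}_{h}),
\]
i.e.\ $\mathbf{e}$ solves \eqref{eq:discrete_state_equation} with control $\mathbf{u}$ and right-hand side $\omega^{2}(\bsiep\cdot(\mathbf{u}-\mathbf{u}_{h}))\by_{h}$. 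The discrete stability bound recalled after \eqref{eq:discrete_eq} then gives $\|\mathbf{e}\|_{\mathbf{H}_{0}(\cu,\Omega)}\lesssim \|(\bsiep\cdot(\mathbf{u}-\mathbf{u}_{h}))\by_{h}\|_{\Omega}\lesssim \|\bsiep\|_{\rL^{\infty}(\Omega;\mathbb{C})}\|\mathbf{u}-\mathbf{u}_{h}\|_{\mathbb{R}^{\ell}}\|\by_{h}\|_{\Omega}$, and bounding $\|\by_{h}\|_{\Omega}\leq\|\by_{h}\|_{\mathbf{H}(\cu,\Omega)}\lesssim\|\bbf\|_{\Omega}$ by discrete stability closes the estimate as $\|\mathbf{e}\|_{\mathbf{H}_{0}(\cu,\Omega)}\lesssim\|\mathbf{u}-\mathbf{u}_{h}\|_{\mathbb{R}^{\ell}}$. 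Combining both contributions yields \eqref{eq:Lipschitz_discrete}.

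For the final convergence claim, suppose $\mathbf{u}_{h}\to\mathbf{u}$ in $\mathbb{R}^{\ell}$ as $h\downarrow 0$. Estimate \eqref{eq:Lipschitz_discrete} immediately shows $\|\by-\by_{h}\|_{\mathbf{H}_{0}(\cu,\Omega)}\to 0$, because $h^{s}\to 0$ and $\|\mathbf{u}-\mathbf{u}_{h}\|_{\mathbb{R}^{\ell}}\to 0$. Since $\mathcal{J}$ is a sum of squared $\brL^{2}(\Omega;\mathbb{C})$-norms of $\by$, $\cu\by$ and of the Euclidean norm of $\mathbf{u}$, it is continuous on $\mathbf{H}_{0}(\cu,\Omega)\times\mathbb{R}^{\ell}$; hence $j_{h}(\mathbf{u}_{h})=\mathcal{J}(\by_{h},\mathbf{u}_{h})\to\mathcal{J}(\by,\mathbf{u})=j(\mathbf{u})$, as claimed.

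The hard part will be ensuring that all hidden constants are uniform with respect to $\mathbf{u},\mathbf{u}_{h}\in U_{ad}$ and to $h$, i.e.\ that the discrete stability constant does not degenerate as the control ranges over $U_{ad}$. This is controlled because the coefficient $\bsiep\cdot\mathbf{u}$ stays in a fixed range determined by the bounds $\mathbf{0}<\mathbf{a}\leq\mathbf{u}\leq\mathbf{b}$ and because $U_{ad}$ is compact, so the well-posedness constants inherited from \eqref{eq:discrete_eq} depend only on the problem data; the only genuinely delicate bookkeeping is the algebraic rearrangement producing the equation for $\mathbf{e}$.
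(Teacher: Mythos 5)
Your proof is correct and follows essentially the same route as the paper: the same auxiliary discrete state $\mathcal{S}_{h}\mathbf{u}$, the same triangle-inequality splitting, the appeal to Theorem \ref{thm:error_estimate} for the pure discretization term, and the same perturbation equation plus discrete stability (with $\|\by_{h}\|_{\Omega}\lesssim\|\bbf\|_{\Omega}$) for the control-difference term. Your closing remark on uniformity of the stability constants over $U_{ad}$ is a sound observation that the paper leaves implicit.
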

\begin{proof}
We introduce the auxiliary variable $\mathsf{y}_{h}\in\mathbf{V}(\mathscr{T}_{h})$ as the solution to 
\begin{align*}
(\mu^{-1}\cu \mathsf{y}_{h}, \cu \bw_{h})_{\Omega} - \omega^2((\bsiep\cdot \mathbf{u}) \mathsf{y}_{h},\bw_{h})_{\Omega} = (\bbf,\bw_{h})_{\Omega}  \quad \forall\bw_{h}\in \mathbf{V}(\mathscr{T}_{h}).
\end{align*}
The use of the triangle inequality yields
\begin{align}\label{eq:estimate_triangle_st}
\|\by - \by_{h} \|_{\mathbf{H}_0(\cu,\Omega)} \leq \|\by - \mathsf{y}_{h} \|_{\mathbf{H}_0(\cu,\Omega)}  + \|\mathsf{y}_{h} - \by_{h} \|_{\mathbf{H}_0(\cu,\Omega)}.
\end{align}
To estimate $\|\by - \mathsf{y}_{h} \|_{\mathbf{H}_0(\cu,\Omega)}$ in \eqref{eq:estimate_triangle_st}, we note that $\mathsf{y}_{h}$ corresponds to the finite element approximation of $\by$ in $\mathbf{V}(\mathscr{T}_{h})$. 
Hence, in light of the assumptions made on $\bbf$, $\mu$, and $\bsiep$, we use Theorem \ref{thm:error_estimate} to obtain $\|\by - \mathsf{y}_{h} \|_{\mathbf{H}_0(\cu,\Omega)}\lesssim h^{s}$ with $s\in[0,\mathfrak{t})$. 
On the other hand, we note that $\mathsf{y}_{h} - \by_{h} \in \mathbf{V}(\mathscr{T}_{h})$ solves the discrete problem
\begin{align*}
&(\mu^{-1}\cu (\mathsf{y}_{h} - \by_{h}), \cu \bw_{h})_{\Omega} - \omega^{2}((\bsiep\cdot \mathbf{u}) (\mathsf{y}_{h} - \by_{h}),\bw_{h})_{\Omega} \\
& \qquad \qquad \qquad \qquad
= \omega^{2}((\bsiep\cdot (\mathbf{u} - \mathbf{u}_{h}))\by_{h},\bw_{h})_{\Omega} \quad \forall\bw_{h}\in \mathbf{V}(\mathscr{T}_{h}).
\end{align*}
The well-posedness of the latter discrete problem in combination with the estimate $\|\by_{h} \|_{\Omega} \lesssim \|\bbf\|_{\Omega}$ implies that $\|\mathsf{y}_{h} - \by_{h} \|_{\mathbf{H}_0(\cu,\Omega)} \lesssim \| \mathbf{u} - \mathbf{u}_{h}\|_{\mathbb{R}^{\ell}}$. 
Therefore, \eqref{eq:Lipschitz_discrete} follows from the estimates provided for $\|\by - \mathsf{y}_{h} \|_{\mathbf{H}_0(\cu,\Omega)}$ and $\|\mathsf{y}_{h} - \by_{h} \|_{\mathbf{H}_0(\cu,\Omega)}$ and \eqref{eq:estimate_triangle_st}.

The second result of the theorem stems from the convergence $\mathbf{u}_{h} \to \mathbf{u}$ in $\mathbb{R}^{\ell}$ as $h\downarrow 0$, and the convergence $\by_{h} \to \by$ in $\mathbf{H}_0(\cu,\Omega)$, which follows from \eqref{eq:Lipschitz_discrete}. 
\end{proof}

We now prove that the sequence of discrete global solutions $\{ \mathbf{u}^{*}_h \}_{h>0}$ contains subsequences that converge, as $h \downarrow 0$, to global solutions of problem \eqref{eq:weak_min_problem}--\eqref{eq:weak_st_eq}.

\begin{theorem}[convergence of global solutions]
\label{thm:convergence_discrete_sol}
Let $\mathbf{u}^{*}_h\in U_{ad}$ be a global solution of the discrete optimal control problem. If assumption \eqref{eq:assumption_f_and_PW} holds, then there exist subsequences of $\{\mathbf{u}^{*}_{h}\}_{h>0}$ (still indexed by $h$) such that $\mathbf{u}^{*}_h \to \mathbf{u}^{*}$ in $\mathbb{R}^{\ell}$, as $h \downarrow 0$. Here, $\mathbf{u}^{*}\in U_{ad}$ corresponds to a global solution of the optimal control problem \eqref{eq:weak_min_problem}--\eqref{eq:weak_st_eq}. 
\end{theorem}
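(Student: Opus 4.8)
The plan is to combine the compactness of $U_{ad}$ with the convergence properties collected in Lemma \ref{lemma:error_estim_st}, and then pass to the limit in the discrete global optimality inequality. The decisive structural simplification here is that the control set $U_{ad}\subset\mathbb{R}^{\ell}$ is \emph{not} discretized: any admissible control is simultaneously feasible for the continuous problem and for every discrete problem, so there is no need to construct discrete approximations of comparison controls. First I would invoke the compactness of $U_{ad}$ (a closed box in $\mathbb{R}^{\ell}$, hence compact): the sequence $\{\mathbf{u}^{*}_{h}\}_{h>0}$ admits a subsequence, still denoted $\{\mathbf{u}^{*}_{h}\}$, converging to some $\mathbf{u}^{*}\in U_{ad}$ as $h\downarrow 0$. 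It then remains to show that this limit is a global solution of \eqref{eq:weak_min_problem}--\eqref{eq:weak_st_eq}.

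Second, I would establish convergence of the states and of the cost along this subsequence. Applying the error estimate \eqref{eq:Lipschitz_discrete} with $\mathbf{u}=\mathbf{u}^{*}$ and $\mathbf{u}_{h}=\mathbf{u}^{*}_{h}$ gives $\|\mathcal{S}\mathbf{u}^{*}-\mathcal{S}_{h}\mathbf{u}^{*}_{h}\|_{\mathbf{H}_{0}(\cu,\Omega)}\lesssim h^{s}+\|\mathbf{u}^{*}-\mathbf{u}^{*}_{h}\|_{\mathbb{R}^{\ell}}\to 0$, so $\by^{*}_{h}\to\by^{*}:=\mathcal{S}\mathbf{u}^{*}$ in $\mathbf{H}_{0}(\cu,\Omega)$. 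The second assertion of Lemma \ref{lemma:error_estim_st}, applied to the convergent sequence $\mathbf{u}^{*}_{h}\to\mathbf{u}^{*}$, then yields $j_{h}(\mathbf{u}^{*}_{h})\to j(\mathbf{u}^{*})$. Applying the same assertion to the constant sequence $\mathbf{u}_{h}\equiv\mathbf{v}$ shows $j_{h}(\mathbf{v})\to j(\mathbf{v})$ for every fixed $\mathbf{v}\in U_{ad}$.

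Finally, I would fix an arbitrary $\mathbf{v}\in U_{ad}$ and exploit that $\mathbf{u}^{*}_{h}$ is a \emph{global} discrete solution, so $j_{h}(\mathbf{u}^{*}_{h})\leq j_{h}(\mathbf{v})$ (here $\mathbf{v}$ is admissible for the discrete problem without any modification). Passing to the limit on both sides using the two cost-convergences just established gives $j(\mathbf{u}^{*})\leq j(\mathbf{v})$. Since $\mathbf{v}\in U_{ad}$ was arbitrary and $\mathbf{u}^{*}\in U_{ad}$, this shows $\mathbf{u}^{*}$ is a global solution of the continuous problem, which concludes the argument. I expect the only genuine content to be the cost-convergence step $j_{h}(\mathbf{u}^{*}_{h})\to j(\mathbf{u}^{*})$, which bundles the state error estimate together with convergence of the quadratic tracking terms; but this is precisely what the second part of Lemma \ref{lemma:error_estim_st} delivers. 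Granting that lemma, the remaining work is routine, and the only point demanding care is that the limit passage in $j_{h}(\mathbf{u}^{*}_{h})\leq j_{h}(\mathbf{v})$ must use the lemma's convergence on \emph{both} sequences—the optimal one and the constant comparison one.
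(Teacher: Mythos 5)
Your proposal is correct and takes essentially the same route as the paper: extract a convergent subsequence $\mathbf{u}^{*}_{h}\to\mathbf{u}^{*}$ by compactness of $U_{ad}$, then pass to the limit in the discrete global optimality inequality using the cost convergence $j_{h}(\cdot)\to j(\cdot)$ supplied by Lemma \ref{lemma:error_estim_st}. The only difference is cosmetic: the paper compares against a continuous global solution $\tilde{\mathbf{u}}$ through an approximating sequence $\tilde{\mathbf{u}}_{h}\to\tilde{\mathbf{u}}$, whereas you compare against an arbitrary $\mathbf{v}\in U_{ad}$ via the constant sequence---a legitimate streamlining (the paper's $\tilde{\mathbf{u}}_{h}$ can indeed be taken constant, since the control set is not discretized) that also spares the appeal to Theorem \ref{thm:existence_opt_sol}.
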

\begin{proof}
Since, for every $h>0$, $\mathbf{u}^{*}_{h}\in U_{ad}$, we have that the sequence $\{\mathbf{u}^{*}_{h}\}_{h>0}$ is uniformly bounded. Hence, there exists a subsequence (still indexed by $h$) such that $\mathbf{u}^{*}_{h}\to \mathbf{u}^{*}$ in $\mathbb{R}^{\ell}$ as $h\downarrow 0$. We now prove that $\mathbf{u}^{*}\in U_{ad}$ solves \eqref{eq:weak_min_problem}--\eqref{eq:weak_st_eq}.

Let $\tilde{\mathbf{u}}\in U_{ad}$ be a global solution to \eqref{eq:weak_min_problem}--\eqref{eq:weak_st_eq}. We denote by $\{\tilde{\mathbf{u}}_{h}\}_{h>0}\subset U_{ad}$ a sequence such that $\tilde{\mathbf{u}}_{h} \to \tilde{\mathbf{u}}$ as $h\downarrow 0$. Hence, the global optimality of $\tilde{\mathbf{u}}$, Lemma \ref{lemma:error_estim_st}, the global optimality of $\mathbf{u}^{*}_{h}$, and the convergence $\tilde{\mathbf{u}}_{h} \to \tilde{\mathbf{u}}$ in $\mathbb{R}^{\ell}$ imply the bound
\begin{align*}
j(\tilde{\mathbf{u}})\leq j(\mathbf{u}^{*}) = \lim_{h\downarrow 0}j_{h}(\mathbf{u}^{*}_{h}) \leq \lim_{h\downarrow 0}j_{h}(\tilde{\mathbf{u}}_{h}) = j(\tilde{\mathbf{u}}).
\end{align*}
This proves that $\mathbf{u}^{*}$ is a global solution to \eqref{eq:weak_min_problem}--\eqref{eq:weak_st_eq}.
\end{proof}

In what follows, we prove that strict local solutions of problem \eqref{eq:weak_min_problem}--\eqref{eq:weak_st_eq} can be approximated by local solutions of the discrete optimal control problem.

\begin{theorem}[convergence of local solutions]
\label{thm:convergence_discrete_sol_local}
Let $\mathbf{u}^{*}\in U_{ad}$ be a strict local minimum of \eqref{eq:weak_min_problem}--\eqref{eq:weak_st_eq}. If assumption \eqref{eq:assumption_f_and_PW} holds, then there exists a sequence of local minima $\{\mathbf{u}^{*}_h\}_{h>0}$ of the discrete problem satisfying  $\mathbf{u}^{*}_h \to \mathbf{u}^{*}$ in $\mathbb{R}^{\ell}$ and $j_{h}(\mathbf{u}^{*}_{h}) \to j(\mathbf{u}^{*})$ in $\mathbb{R}$ as $h\downarrow 0$.
\end{theorem}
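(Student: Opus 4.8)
The plan is to follow the classical localization strategy for approximating strict local minima, centering the analysis on a small closed ball around $\mathbf{u}^{*}$ and exploiting the convergence of cost functionals established in Lemma \ref{lemma:error_estim_st}. Since $\mathbf{u}^{*}$ is a strict local minimum, there exists $\delta>0$ such that $j(\mathbf{u}^{*}) < j(\mathbf{u})$ for every $\mathbf{u}\in U_{ad}$ with $0<\|\mathbf{u}-\mathbf{u}^{*}\|_{\mathbb{R}^{\ell}}\leq\delta$. I would introduce the auxiliary admissible set $U_{ad}^{\delta}:=\{\mathbf{u}\in U_{ad}:\|\mathbf{u}-\mathbf{u}^{*}\|_{\mathbb{R}^{\ell}}\leq\delta\}$, which is compact in $\mathbb{R}^{\ell}$ and on which $\mathbf{u}^{*}$ is the \emph{unique} global minimizer of $j$.

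Next I would consider, for each $h>0$, the auxiliary discrete problem of minimizing $j_{h}$ over the compact set $U_{ad}^{\delta}$. Since $j_{h}$ is continuous, this problem admits a global solution $\mathbf{u}_{h}^{*}\in U_{ad}^{\delta}$ by Weierstra{\ss}'s theorem. By boundedness of $U_{ad}^{\delta}$ in finite dimensions, the sequence $\{\mathbf{u}_{h}^{*}\}_{h>0}$ admits a subsequence (still indexed by $h$) converging to some $\bar{\mathbf{u}}\in U_{ad}^{\delta}$. To identify the limit, I would use the minimality of $\mathbf{u}_{h}^{*}$ together with the admissibility of $\mathbf{u}^{*}$ to write $j_{h}(\mathbf{u}_{h}^{*})\leq j_{h}(\mathbf{u}^{*})$. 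Passing to the limit and invoking the second statement of Lemma \ref{lemma:error_estim_st}---applied once to the convergent sequence $\mathbf{u}_{h}^{*}\to\bar{\mathbf{u}}$ and once to the constant sequence $\mathbf{u}^{*}\to\mathbf{u}^{*}$---yields $j(\bar{\mathbf{u}})\leq j(\mathbf{u}^{*})$. Since $\bar{\mathbf{u}}\in U_{ad}^{\delta}$ and $\mathbf{u}^{*}$ is the unique minimizer of $j$ on this set, I conclude $\bar{\mathbf{u}}=\mathbf{u}^{*}$. As every convergent subsequence shares this limit, the whole sequence satisfies $\mathbf{u}_{h}^{*}\to\mathbf{u}^{*}$.

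The final point is to upgrade each $\mathbf{u}_{h}^{*}$ from a minimizer over the restricted set $U_{ad}^{\delta}$ to a genuine local minimizer of the discrete problem over all of $U_{ad}$. This is where the argument requires care: the convergence $\mathbf{u}_{h}^{*}\to\mathbf{u}^{*}$ guarantees that for all sufficiently small $h$ one has $\|\mathbf{u}_{h}^{*}-\mathbf{u}^{*}\|_{\mathbb{R}^{\ell}}<\delta$, so that $\mathbf{u}_{h}^{*}$ lies strictly inside the ball of radius $\delta$; consequently, by the triangle inequality, any admissible control in a sufficiently small neighborhood of $\mathbf{u}_{h}^{*}$ still belongs to $U_{ad}^{\delta}$, and global minimality over $U_{ad}^{\delta}$ forces $j_{h}(\mathbf{u}_{h}^{*})\leq j_{h}(\mathbf{u})$ for all such $\mathbf{u}$, i.e., $\mathbf{u}_{h}^{*}$ is a local minimum of $j_{h}$ over $U_{ad}$. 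The convergence of the cost values $j_{h}(\mathbf{u}_{h}^{*})\to j(\mathbf{u}^{*})$ then follows immediately from the second statement of Lemma \ref{lemma:error_estim_st}. I expect the main obstacle to be precisely this last localization step: ensuring the strict inequality $\|\mathbf{u}_{h}^{*}-\mathbf{u}^{*}\|_{\mathbb{R}^{\ell}}<\delta$ holds for small $h$, so that the restricted minimizers become interior points and hence unconstrained local minima.
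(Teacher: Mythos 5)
Your proposal is correct and follows essentially the same localization strategy as the paper: restrict to the closed ball $B_{\delta}(\mathbf{u}^{*})$ where $\mathbf{u}^{*}$ is the unique minimizer, minimize $j_{h}$ over $U_{ad}\cap B_{\delta}(\mathbf{u}^{*})$, pass to the limit via Lemma \ref{lemma:error_estim_st}, and observe that for small $h$ the ball constraint is inactive so the restricted minimizers are genuine discrete local minima. The only cosmetic difference is that you spell out the limit-identification step (comparing against the constant competitor $\mathbf{u}^{*}$), where the paper simply invokes the argument from Theorem \ref{thm:convergence_discrete_sol}.
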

\begin{proof}
Since $\mathbf{u}^{*}$ is a \emph{strict} local minimum of \eqref{eq:weak_min_problem}--\eqref{eq:weak_st_eq}, there exists $\delta > 0$ such that the problem 
\begin{align}\label{eq:local_problem}
\min\{ j(\mathbf{u}): \mathbf{u} \in U_{ad}\cap B_{\delta}(\mathbf{u}^{*})\} \quad \text{with} \quad B_{\delta}(\mathbf{u}^{*}):=\{ \mathbf{u} \in \mathbb{R}^{\ell} : \|\mathbf{u}^{*}-\mathbf{u}\|_{\mathbb{R}^{\ell}}\leq \delta\},
\end{align}
admits $\mathbf{u}^{*}$ as the unique solution. On the other hand, let us consider, for $h>0$, the discrete problem: Find $\min\{j_{h}(\mathbf{u}_h): \mathbf{u}_h\in U_{ad}\cap B_{\delta}(\mathbf{u}^{*})\}$. We notice that this problem admits a solution. In fact, the set $U_{ad}\cap B_{\delta}(\mathbf{u}^{*})$ is closed, bounded, and nonempty.

Let $\mathbf{u}^{*}_h$ be a global solution of $\min\{j_{h}(\mathbf{u}_h): \mathbf{u}_h\in U_{ad,h}\cap B_{\delta}(\mathbf{u}^{*})\}$. We proceed as in the proof of Theorem \ref{thm:convergence_discrete_sol} to conclude the existence of a subsequence of $\{\mathbf{u}^{*}_h\}_{h>0}$ such that it converges to a solution of problem \eqref{eq:local_problem}. Since the latter problem admits a unique solution $\mathbf{u}^{*}$, we must have $\mathbf{u}^{*}_h \rightarrow \mathbf{u}^{*}$ in $\mathbb{R}^{\ell}$ as $h \downarrow 0$. This convergence also implies, for $h$ small enough, that the constraint $\mathbf{u}^{*}_{h} \in B_{\delta}(\mathbf{u}^{*})$ is not active. As a result, $\mathbf{u}^{*}_h$ is a local solution of the discrete optimal control problem. Finally, Lemma \ref{lemma:error_estim_st} yields that $\lim_{h\to 0}j_{h}(\mathbf{u}^{*}_{h}) = j(\mathbf{u}^{*})$, in view of the convergence $\mathbf{u}^{*}_h \rightarrow \mathbf{u}^{*}$ in $\mathbb{R}^{\ell}$.
\end{proof}


\subsection{A priori error estimates}

Let $\{\mathbf{u}^{*}_{h}\}_{h>0} \subset U_{ad}$ be a sequence of local minima of the discrete control problems such that $\mathbf{u}^{*}_{h} \to \mathbf{u}^{*}$ in $\mathbb{R}^{\ell}$ as $h\downarrow 0$, where $\mathbf{u}^{*}\in U_{ad}$ is a strict local solution of \eqref{eq:weak_min_problem}--\eqref{eq:weak_st_eq}; see Theorem \ref{thm:convergence_discrete_sol_local}. 
In this section we obtain an order of convergence for the approximation error $\mathbf{u}^{*} - \mathbf{u}^{*}_{h}$ in $\mathbb{R}^{\ell}$. 

Let $\mathbf{u}\in U_{ad}$ be arbitrary and let $\by\in \mathbf{H}_{0}(\cu,\Omega)$ be the unique solution to \eqref{eq:weak_st_eq} associated to $\mathbf{u}$. Let $\bp\in \mathbf{H}_{0}(\cu,\Omega)$ be the unique solution to problem \eqref{eq:adj_eq}. We introduce $\bp_{h}\in\mathbf{V}(\mathscr{T}_{h})$ as the finite element approximation of $\bp$. In order to prove the remaining results of this section, we assume that there exists $\mathfrak{s}\in(0,1]$, such that
\begin{equation}\label{eq:assumption_p-ph}
\|\bp - \bp_{h} \|_{\Omega} \lesssim h^{\mathfrak{s}}.
\end{equation}
With this assumption at hand, we prove the following auxiliary result.

\begin{proposition}[error estimate]\label{prop:error_estimate_adj}
Let $\bp^{*}\in \mathbf{H}_0(\cu,\Omega)$ and $\bp^{*}_{h}\in \mathbf{V}(\mathscr{T}_{h})$ be the unique solutions to \eqref{eq:adj_eq} and \eqref{eq:discrete_adjoint_equation}, respectively. Let us assume that assumptions \eqref{eq:assumption_f_and_PW} and \eqref{eq:assumption_p-ph} hold. Then, we have the error estimate
\begin{align*}\label{eq:Lipschitz_discrete_adj}
\|\bp^{*} - \bp^{*}_{h} \|_{\Omega} \lesssim h^{\min\{s,\mathfrak{s}\}} + \| \mathbf{u}^{*} - \mathbf{u}^{*}_{h}\|_{\mathbb{R}^{\ell}},
\end{align*}
where $\mathfrak{s}\in(0,1]$ and $s\in[0,\mathfrak{t})$ with $\mathfrak{t}$ given as in Theorem \ref{thm:error_estimate}.
\end{proposition}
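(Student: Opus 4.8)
The plan is to follow the same auxiliary-variable strategy used in the proof of Lemma \ref{lemma:error_estim_st}, now applied to the adjoint equation. First I would introduce the intermediate function $\mathsf{p}_{h}\in \mathbf{V}(\mathscr{T}_{h})$ defined as the Galerkin approximation of $\bp^{*}$ built from the \emph{continuous} data, i.e. the unique solution to
\begin{align*}
(\mu^{-1}\cu \mathsf{p}_{h}, \cu \bw_{h})_{\Omega} - \omega^{2}((\bsiep\cdot \mathbf{u}^{*})\mathsf{p}_{h},\bw_{h})_{\Omega}
&= (\overline{\by^{*} - \by_{\Omega}},\bw_{h})_{\Omega} + (\overline{\cu \by^{*} - \bE_{\Omega}}, \cu \bw_{h})_{\Omega}
\end{align*}
for all $\bw_{h}\in \mathbf{V}(\mathscr{T}_{h})$. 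Since $\mathsf{p}_{h}$ is precisely the finite element approximation of $\bp^{*}$ associated with the continuous control $\mathbf{u}^{*}$ and state $\by^{*}$, assumption \eqref{eq:assumption_p-ph} (evaluated at $\mathbf{u}=\mathbf{u}^{*}$) gives immediately $\|\bp^{*} - \mathsf{p}_{h}\|_{\Omega}\lesssim h^{\mathfrak{s}}$. The triangle inequality $\|\bp^{*} - \bp^{*}_{h}\|_{\Omega}\leq \|\bp^{*} - \mathsf{p}_{h}\|_{\Omega} + \|\mathsf{p}_{h} - \bp^{*}_{h}\|_{\Omega}$ then reduces the task to bounding the fully discrete difference $\mathbf{e}:=\mathsf{p}_{h} - \bp^{*}_{h}\in \mathbf{V}(\mathscr{T}_{h})$.

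The second step is to identify the discrete problem solved by $\mathbf{e}$. Subtracting \eqref{eq:discrete_adjoint_equation} from the defining equation for $\mathsf{p}_{h}$ and using the decomposition $(\bsiep\cdot\mathbf{u}^{*})\mathsf{p}_{h} - (\bsiep\cdot\mathbf{u}^{*}_{h})\bp^{*}_{h} = (\bsiep\cdot\mathbf{u}^{*})\mathbf{e} + (\bsiep\cdot(\mathbf{u}^{*}-\mathbf{u}^{*}_{h}))\bp^{*}_{h}$, one finds that $\mathbf{e}$ satisfies
\begin{align*}
(\mu^{-1}\cu \mathbf{e}, \cu \bw_{h})_{\Omega} - \omega^{2}((\bsiep\cdot \mathbf{u}^{*})\mathbf{e},\bw_{h})_{\Omega}
&= \omega^{2}((\bsiep\cdot(\mathbf{u}^{*}-\mathbf{u}^{*}_{h}))\bp^{*}_{h},\bw_{h})_{\Omega} \\
&\quad + (\overline{\by^{*}-\by^{*}_{h}},\bw_{h})_{\Omega} + (\overline{\cu(\by^{*}-\by^{*}_{h})},\cu\bw_{h})_{\Omega}
\end{align*}
for all $\bw_{h}\in\mathbf{V}(\mathscr{T}_{h})$. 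Invoking the discrete stability bound for this problem, which is the discrete analogue of the coercivity estimate \eqref{eq:coercivity_a}, yields $\|\mathbf{e}\|_{\mathbf{H}_{0}(\cu,\Omega)} \lesssim \|(\bsiep\cdot(\mathbf{u}^{*}-\mathbf{u}^{*}_{h}))\bp^{*}_{h}\|_{\Omega} + \|\by^{*}-\by^{*}_{h}\|_{\mathbf{H}_{0}(\cu,\Omega)}$.

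The final step is to control the two terms on the right-hand side. For the first I would bound $\|(\bsiep\cdot(\mathbf{u}^{*}-\mathbf{u}^{*}_{h}))\bp^{*}_{h}\|_{\Omega}\lesssim \|\bsiep\|_{\rL^{\infty}(\Omega;\mathbb{C})}\|\mathbf{u}^{*}-\mathbf{u}^{*}_{h}\|_{\mathbb{R}^{\ell}}\|\bp^{*}_{h}\|_{\Omega}$ and absorb $\|\bp^{*}_{h}\|_{\Omega}\lesssim \|\bp^{*}_{h}\|_{\mathbf{H}(\cu,\Omega)}\lesssim \|\bbf\|_{\Omega}+\|\by_{\Omega}\|_{\Omega}+\|\bE_{\Omega}\|_{\Omega}$ via the discrete analogue of the stability estimate \eqref{eq:stab_adj_eq}, producing a contribution of order $\|\mathbf{u}^{*}-\mathbf{u}^{*}_{h}\|_{\mathbb{R}^{\ell}}$. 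For the second term I would apply Lemma \ref{lemma:error_estim_st} with $\mathbf{u}=\mathbf{u}^{*}$ and $\mathbf{u}_{h}=\mathbf{u}^{*}_{h}$ to get $\|\by^{*}-\by^{*}_{h}\|_{\mathbf{H}_{0}(\cu,\Omega)}\lesssim h^{s}+\|\mathbf{u}^{*}-\mathbf{u}^{*}_{h}\|_{\mathbb{R}^{\ell}}$. Collecting these gives $\|\mathsf{p}_{h}-\bp^{*}_{h}\|_{\Omega}\lesssim h^{s}+\|\mathbf{u}^{*}-\mathbf{u}^{*}_{h}\|_{\mathbb{R}^{\ell}}$, and combining with $\|\bp^{*}-\mathsf{p}_{h}\|_{\Omega}\lesssim h^{\mathfrak{s}}$ through the triangle inequality yields the claimed rate $h^{\min\{s,\mathfrak{s}\}}$. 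I do not anticipate a genuine obstacle; the only delicate points are the algebraic bookkeeping in the subtraction that isolates $(\bsiep\cdot\mathbf{u}^{*})\mathbf{e}$, and the conceptual choice to define $\mathsf{p}_{h}$ with the continuous data $\by^{*}$ rather than $\by^{*}_{h}$ so that assumption \eqref{eq:assumption_p-ph} applies verbatim, the discrepancy between $\by^{*}$ and $\by^{*}_{h}$ being thereby relegated to the residual of the discrete problem for $\mathbf{e}$ and absorbed through Lemma \ref{lemma:error_estim_st}.
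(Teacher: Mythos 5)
Your proposal is correct and follows essentially the same route as the paper's proof: the same auxiliary Galerkin approximation $\mathsf{p}_{h}$ of $\bp^{*}$ built from the continuous data (so that assumption \eqref{eq:assumption_p-ph} applies directly), the same triangle inequality, the same identification of the discrete problem solved by $\mathsf{p}_{h}-\bp^{*}_{h}$ with the splitting $(\bsiep\cdot\mathbf{u}^{*})(\mathsf{p}_{h}-\bp^{*}_{h})+(\bsiep\cdot(\mathbf{u}^{*}-\mathbf{u}^{*}_{h}))\bp^{*}_{h}$, and the same conclusion via discrete stability, the bound on $\|\bp^{*}_{h}\|_{\mathbf{H}(\cu,\Omega)}$, and Lemma \ref{lemma:error_estim_st}. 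No gaps.
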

\begin{proof}
The use of the triangle inequality yields
\begin{align}\label{eq:estimate_triangle_adj}
\|\bp^{*} - \bp^{*}_{h} \|_{\Omega} 
\lesssim
\|\bp^{*} - \mathsf{p}_{h} \|_{\Omega}  + \|\mathsf{p}_{h} - \bp^{*}_{h} \|_{\Omega},
\end{align}
where $\mathsf{p}_{h}\in\mathbf{V}(\mathscr{T}_{h})$ is the unique solution to 
\begin{align}\label{eq:aux_ph}
&(\mu^{-1}\cu \mathsf{p}_{h}, \cu \bw_{h})_{\Omega} - \omega^{2}((\bsiep\cdot \mathbf{u}^{*})\mathsf{p}_{h},\bw_{h})_{\Omega} \\ 
& \qquad \qquad = (\overline{\by^{*} - \by_{\Omega}},\bw_{h})_{\Omega} + (\overline{\cu \by^{*} - \bE_{\Omega}}, \cu \bw_{h})_{\Omega} \quad \forall \bw_{h} \in \mathbf{V}(\mathscr{T}_{h}). \nonumber
\end{align}
We notice that $\mathsf{p}_{h}$ corresponds to the finite element approximation of $\bp^{*}$ in $\mathbf{V}(\mathscr{T}_{h})$. Assumption \eqref{eq:assumption_p-ph} thus yields $\|\bp^{*} - \mathsf{p}_{h} \|_{\Omega}\lesssim h^{\mathfrak{s}}$. On the other hand, we note that $\mathsf{p}_{h} - \bp^{*}_{h} \in \mathbf{V}(\mathscr{T}_{h})$ solves 
\begin{align*}
&(\mu^{-1}\cu (\mathsf{p}_{h} - \bp^{*}_{h}), \cu \bw_{h})_{\Omega} - \omega^{2}((\bsiep\cdot \mathbf{u}^{*})(\mathsf{p}_{h} - \bp^{*}_{h}),\bw_{h})_{\Omega} = (\overline{\by^{*} - \by^{*}_{h}},\bw_{h})_{\Omega} \\  
& \quad \quad + (\overline{\cu (\by^{*} - \by^{*}_{h})}, \cu \bw_{h})_{\Omega} + \omega^{2}((\bsiep\cdot (\mathbf{u}^{*} - \mathbf{u}_{h}^{*}))\bp^{*}_{h},\bw_{h})_{\Omega} \quad \forall \bw_{h} \in \mathbf{V}(\mathscr{T}_{h}).
\end{align*}
The well-posedness of the previous discrete problem, the estimate $\|\bp^{*}_{h} \|_{\mathbf{H}_0(\cu,\Omega)} \lesssim \|\bbf\|_{\Omega} + \|\by_{\Omega}\|_{\Omega} + \|\bE_{\Omega}\|_{\Omega}$, and Lemma \ref{lemma:error_estim_st} imply that 
\begin{align*}
\|\mathsf{p}_{h} - \bp^{*}_{h} \|_{\Omega} 
\lesssim \|\by^{*} - \by^{*}_{h} \|_{\mathbf{H}_0(\cu,\Omega)} + \|\mathbf{u}^{*} - \mathbf{u}^{*}_{h}\|_{\mathbb{R}^{\ell}} \lesssim h^{s} + \| \mathbf{u}^{*} - \mathbf{u}^{*}_{h}\|_{\mathbb{R}^{\ell}}.
\end{align*}
Using in \eqref{eq:estimate_triangle_adj} the estimates obtained for $\|\bp^{*} - \mathsf{p}_{h} \|_{\Omega}$ and $\|\mathsf{p}_{h} - \bp^{*}_{h} \|_{\Omega}$ ends the proof.
\end{proof}

We now provide a first estimate for $\|\mathbf{u}^{*} - \mathbf{u}^{*}_{h}\|_{\mathbb{R}^{\ell}}$.

\begin{lemma}[auxiliary estimate]\label{lemma:aux_estimate}
Let $\mathbf{u}^{*}\in U_{ad}$ such that it satisfies the second-order optimality condition \eqref{eq:second_order_equivalent}. If assumptions \eqref{eq:assumption_f_and_PW} and \eqref{eq:assumption_p-ph} hold, then there exists $h_{\dagger} > 0$ such that 
\begin{align}\label{eq:aux_estimate}
\frac{\nu}{2}\|\mathbf{u}^{*}-\mathbf{u}^{*}_h\|_{\mathbb{R}^{\ell}}^2 \leq [j'(\mathbf{u}^{*}_h)-j'(\mathbf{u}^{*})](\mathbf{u}^{*}_h-\mathbf{u}^{*}) \quad \forall h < h_{\dagger}.
\end{align}
\end{lemma}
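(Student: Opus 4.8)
The plan is to combine a mean value argument for $j'$ with the coercivity \eqref{eq:second_order_equivalent}, after checking that the increment $\mathbf{v}_h:=\mathbf{u}^{*}_h-\mathbf{u}^{*}$ eventually lies in the cone $\mathbf{C}_{\mathbf{u}^{*}}^{\tau}$. First I would apply the mean value theorem to the smooth scalar map $t\mapsto j'(\mathbf{u}^{*}+t\mathbf{v}_h)\mathbf{v}_h$ on $[0,1]$ to write $[j'(\mathbf{u}^{*}_h)-j'(\mathbf{u}^{*})]\mathbf{v}_h = j''(\hat{\mathbf{u}}_h)\mathbf{v}_h^2$ for some $\hat{\mathbf{u}}_h=\mathbf{u}^{*}+\theta_h\mathbf{v}_h$, $\theta_h\in(0,1)$; by convexity of $U_{ad}$ we have $\hat{\mathbf{u}}_h\in U_{ad}$ and $\|\hat{\mathbf{u}}_h-\mathbf{u}^{*}\|_{\mathbb{R}^{\ell}}\leq\|\mathbf{v}_h\|_{\mathbb{R}^{\ell}}$. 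The local Lipschitz bound \eqref{eq:Lipschitz_property} then gives $j''(\hat{\mathbf{u}}_h)\mathbf{v}_h^2\geq j''(\mathbf{u}^{*})\mathbf{v}_h^2-C_L\|\mathbf{v}_h\|_{\mathbb{R}^{\ell}}^3$.

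The main difficulty is to prove that $\mathbf{v}_h\in\mathbf{C}_{\mathbf{u}^{*}}^{\tau}$ for all sufficiently small $h$, so that \eqref{eq:second_order_equivalent} can be invoked with $\mathbf{v}=\mathbf{v}_h$. The sign condition \eqref{eq:sign_cond} holds trivially: since $\mathbf{u}^{*},\mathbf{u}^{*}_h\in U_{ad}$, the identity $\mathbf{u}^{*}_k=\mathbf{a}_k$ yields $(\mathbf{v}_h)_k=(\mathbf{u}^{*}_h)_k-\mathbf{a}_k\geq 0$, and likewise $(\mathbf{v}_h)_k\leq 0$ when $\mathbf{u}^{*}_k=\mathbf{b}_k$. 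For condition \eqref{eq:v_i_tau}, I would set $\bar{\mathfrak{d}}_{h,k}:=\alpha(\mathbf{u}^{*}_h)_k+\omega^2\mathfrak{Re}\{\int_{\Omega_k}\bsiep\by^{*}_h\cdot\bp^{*}_h\}$, the coefficients appearing in the discrete variational inequality \eqref{eq:discrete_var_ineq}, and show $\bar{\mathfrak{d}}_{h,k}\to\bar{\mathfrak{d}}_k$ as $h\downarrow 0$; this follows from $\mathbf{u}^{*}_h\to\mathbf{u}^{*}$ together with the state convergence $\by^{*}_h\to\by^{*}$ of Lemma \ref{lemma:error_estim_st} and the adjoint convergence $\bp^{*}_h\to\bp^{*}$ of Proposition \ref{prop:error_estimate_adj}. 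If $\bar{\mathfrak{d}}_k>\tau$, then reading \eqref{eq:var_ineq_with_adj_state} componentwise over $[\mathbf{a}_k,\mathbf{b}_k]$ forces $\mathbf{u}^{*}_k=\mathbf{a}_k$; for $h$ small one has $\bar{\mathfrak{d}}_{h,k}>0$, so \eqref{eq:discrete_var_ineq} forces $(\mathbf{u}^{*}_h)_k=\mathbf{a}_k$ as well, whence $(\mathbf{v}_h)_k=0$. The case $\bar{\mathfrak{d}}_k<-\tau$ is symmetric, yielding $(\mathbf{u}^{*}_h)_k=\mathbf{u}^{*}_k=\mathbf{b}_k$. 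Hence $\mathbf{v}_h$ satisfies \eqref{eq:v_i_tau} once $h$ is below a threshold, and therefore $\mathbf{v}_h\in\mathbf{C}_{\mathbf{u}^{*}}^{\tau}$.

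With this membership established, \eqref{eq:second_order_equivalent} gives $j''(\mathbf{u}^{*})\mathbf{v}_h^2\geq\nu\|\mathbf{v}_h\|_{\mathbb{R}^{\ell}}^2$, and combining with the Lipschitz step yields
\[
[j'(\mathbf{u}^{*}_h)-j'(\mathbf{u}^{*})]\mathbf{v}_h \;\geq\; \nu\|\mathbf{v}_h\|_{\mathbb{R}^{\ell}}^2 - C_L\|\mathbf{v}_h\|_{\mathbb{R}^{\ell}}^3 \;=\; \|\mathbf{v}_h\|_{\mathbb{R}^{\ell}}^2\bigl(\nu - C_L\|\mathbf{v}_h\|_{\mathbb{R}^{\ell}}\bigr).
\]
Since $\mathbf{u}^{*}_h\to\mathbf{u}^{*}$ forces $\|\mathbf{v}_h\|_{\mathbb{R}^{\ell}}\to 0$, I would choose $h_{\dagger}>0$ small enough that both the cone-membership threshold above holds and $C_L\|\mathbf{v}_h\|_{\mathbb{R}^{\ell}}\leq\nu/2$ for all $h<h_{\dagger}$; the displayed estimate then collapses to \eqref{eq:aux_estimate}. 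The only genuinely delicate point is the cone-membership argument, since it hinges on the joint convergence of the discrete state and adjoint toward their continuous counterparts in order to transfer the strict sign of $\bar{\mathfrak{d}}_k$ to $\bar{\mathfrak{d}}_{h,k}$ and thereby pin $(\mathbf{u}^{*}_h)_k$ to the same active bound as $\mathbf{u}^{*}_k$.
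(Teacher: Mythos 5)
Your proof is correct and follows essentially the same route as the paper's: Step 1 establishes the cone membership $\mathbf{u}^{*}_h-\mathbf{u}^{*}\in\mathbf{C}_{\mathbf{u}^{*}}^{\tau}$ by comparing $\bar{\mathfrak{d}}_h$ with $\bar{\mathfrak{d}}$ via the state/adjoint error estimates and transferring the strict sign through the two variational inequalities, and Step 2 combines the mean value theorem, the coercivity \eqref{eq:second_order_equivalent}, and the Lipschitz bound \eqref{eq:Lipschitz_property} exactly as in the paper. Your only cosmetic deviation is making the threshold explicit as $C_L\|\mathbf{v}_h\|_{\mathbb{R}^{\ell}}\leq\nu/2$ instead of the paper's add-and-subtract of $j''(\mathbf{u}^{*}_{\theta})$, which is an equivalent rearrangement.
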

\begin{proof}
We divide the proof into two steps.

\underline{Step 1.} Let us prove that $\mathbf{u}^{*}_{h} - \mathbf{u}^{*} \in \mathbf{C}_{\mathbf{u}^{*}}^{\tau}$ when $h$ is small enough; we recall that $\mathbf{C}_{\mathbf{u}^{*}}^{\tau}$ is defined in \eqref{def:cone_kappa}. Since $\mathbf{u}^{*}_{h}\in U_{ad}$ the sign condition \eqref{eq:sign_cond} holds. To prove the remaining condition \eqref{eq:v_i_tau}, we introduce the term $\bar{\mathfrak{d}}_{h}\in\mathbb{R}^{\ell}$ as follows:
\begin{align*}
(\bar{\mathfrak{d}_{h}})_{k}:=\alpha(\mathbf{u}^{*}_{h})_{k} 
+ \omega^{2} \mathfrak{Re}\left\{\int_{\Omega_{k}}\bsiep\by_{h}^{*}\cdot\bp_{h}^{*}\right\}, \qquad k\in\{1,\ldots,\ell\}.
\end{align*}
Invoke the term $\bar{\mathfrak{d}}\in\mathbb{R}^{\ell}$ defined by $\bar{\mathfrak{d}}_{k}:=\alpha\mathbf{u}^{*}_{k} 
+ \omega^{2} \mathfrak{Re}\{\int_{\Omega_{k}}\bsiep\by^{*}\cdot\bp^{*}\}$. A simple computation thus reveals that
\begin{align*}
\|\bar{\mathfrak{d}} - \bar{\mathfrak{d}}_{h}\|_{\mathbb{R}^{\ell}}
\leq & \,
\alpha\|\mathbf{u}^{*} - \mathbf{u}^{*}_{h}\|_{\mathbb{R}^{\ell}} + \omega^{2} \left(\sum_{k=1}^{\ell}\mathfrak{Re}\left\{\int_{\Omega_{k}}\bsiep(\by^{*}\cdot\bp^{*} - \by^{*}_{h}\cdot\bp^{*}_{h})\right\}^{2}\right)^{\frac{1}{2}} \\
\leq & \,
\alpha\|\mathbf{u}^{*} - \mathbf{u}^{*}_{h}\|_{\mathbb{R}^{\ell}} + \omega^{2} \left(\sum_{k=1}^{\ell}\left|\int_{\Omega_{k}}\bsiep(\by^{*}\cdot\bp^{*} - \by^{*}_{h}\cdot\bp^{*}_{h})\right|^{2}\right)^{\frac{1}{2}} \\
\lesssim & \,
\|\mathbf{u}^{*} - \mathbf{u}^{*}_{h}\|_{\mathbb{R}^{\ell}} + \|\bsiep\|_{\rL^{\infty}(\Omega;\mathbb{C})}\int_{\Omega}|\by^{*}\cdot\bp^{*} - \by^{*}_{h}\cdot\bp^{*}_{h}| \\
\lesssim 
& \, \|\mathbf{u}^{*} - \mathbf{u}^{*}_{h}\|_{\mathbb{R}^{\ell}} + (\|\by^{*} - \by^{*}_{h}\|_{\Omega}\|\bp^{*}\|_{\Omega} + \|\by^{*}_{h}\|_{\Omega}\|\bp^{*}  - \bp^{*}_{h}\|_{\Omega}).
\end{align*}
Hence, in view of Lemma \ref{lemma:error_estim_st}, Proposition \ref{prop:error_estimate_adj}, and the convergence $\mathbf{u}^{*}_{h} \to \mathbf{u}^{*}$ in $\mathbb{R}^{\ell}$, as $h\downarrow 0$, we conclude that there exists $h_{\circ}>0$ such that $\|\bar{\mathfrak{d}} - \bar{\mathfrak{d}}_{h}\|_{\mathbb{R}^{\ell}} < \tau$ for all $h<h_{\circ}$. 

Now, let $k\in\{1,\ldots,\ell\}$ be fixed but arbitrary. 
If, on one hand, $\bar{\mathfrak{d}}_{k} > \tau$, then $(\bar{\mathfrak{d}}_{h})_{k} > 0$ and, in view of inequalities \eqref{eq:var_ineq_with_adj_state} and \eqref{eq:discrete_var_ineq}, we also have that $\mathbf{u}^{*}_{k} = (\mathbf{u}^{*}_{h})_{k} = \mathbf{a}_{k}$. 
Consequently, $(\mathbf{u}^{*}_{h})_{k}  - \mathbf{u}^{*}_{k} = 0$. If, on the other hand, $\bar{\mathfrak{d}}_{k} < - \tau$, then $(\bar{\mathfrak{d}}_{h})_{k} < 0$ and $\mathbf{u}^{*}_{k} = (\mathbf{u}^{*}_{h})_{k} = \mathbf{b}_{k}$, and thus $(\mathbf{u}^{*}_{h})_{k}  - \mathbf{u}^{*}_{k} = 0$. 
Therefore, $\mathbf{u}^{*}_{h} - \mathbf{u}^{*}$ satisfies condition  \eqref{eq:v_i_tau} and thus it belongs to $\mathbf{C}_{\mathbf{u}^{*}}^{\tau}$.

\underline{Step 2.} Let us prove estimate \eqref{eq:aux_estimate}. Since $\mathbf{u}^{*}_{h} - \mathbf{u}^{*} \in \mathbf{C}_{\mathbf{u}^{*}}^{\tau}$ for all $h<h_{\circ}$, we are allowed to use $\mathbf{v}=\mathbf{u}^{*}_{h} - \mathbf{u}^{*}$ in the second-order optimality condition \eqref{eq:second_order_equivalent} to obtain
\begin{equation}\label{eq:j''_u_uh}
j''(\mathbf{u}^{*})(\mathbf{u}^{*}_{h} - \mathbf{u}^{*})^2  \geq \nu \|\mathbf{u}^{*}_{h} - \mathbf{u}^{*}\|_{\mathbb{R}^{\ell}}^2.
\end{equation}
On the other hand, the use of the mean value theorem yields $(j'(\mathbf{u}^{*}_{h})-j'(\mathbf{u}^{*}))(\mathbf{u}^{*}_{h} - \mathbf{u}^{*})=j''(\mathbf{u}^{*}_{\theta})(\mathbf{u}^{*}_{h} - \mathbf{u}^{*})^2$, where $\mathbf{u}^{*}_{\theta}=\mathbf{u}^{*}+\theta_{h}(\mathbf{u}^{*}_{h} - \mathbf{u}^{*})$ with $\theta_{h} \in (0,1)$. This identity in combination with inequality \eqref{eq:j''_u_uh} results in
\begin{equation}\label{eq:ineq_j''_u_uh}
\nu \|\mathbf{u}^{*}_{h} - \mathbf{u}^{*}\|_{\mathbb{R}^{\ell}}^2
\leq (j'(\mathbf{u}^{*}_{h})-j'(\mathbf{u}^{*}))(\mathbf{u}^{*}_{h} - \mathbf{u}^{*}) + (j''(\mathbf{u}^{*}) - j''(\mathbf{u}^{*}_{\theta}))(\mathbf{u}^{*}_{h} - \mathbf{u}^{*})^2.
\end{equation}
The convergence $\mathbf{u}^{*}_{\theta} \to \mathbf{u}^{*}$ in $\mathbb{R}^{\ell}$ as $h\downarrow 0$ and estimate \eqref{eq:Lipschitz_property} allow us to conclude the existence of $0 < h_{\dagger} \leq h_{\circ}$  such that 
\begin{align*}
(j''(\mathbf{u}^{*}) - j''(\mathbf{u}^{*}_{\theta}))(\mathbf{u}^{*}_{h} - \mathbf{u}^{*})^2
\leq \frac{\nu}{2}\|\mathbf{u}^{*}_{h} - \mathbf{u}^{*}\|_{\mathbb{R}^{\ell}}^2 \quad \forall h < h_{\dagger}.
\end{align*}
The use of the latter inequality in \eqref{eq:ineq_j''_u_uh} concludes the proof.
\end{proof}

We are now in position to present the main result of this section.

\begin{theorem}[a priori error estimate]\label{thm:a_priori_estimate_ocp}
Let $\mathbf{u}^{*}\in U_{ad}$ be such that it satisfies the second-order optimality condition \eqref{eq:second_order_equivalent}. Then, if assumptions \eqref{eq:assumption_f_and_PW} and \eqref{eq:assumption_p-ph} hold, there exists $h_{\dagger} > 0$ such that 
\begin{align*}
\|\mathbf{u}^{*}-\mathbf{u}^{*}_h\|_{\mathbb{R}^{\ell}} \lesssim  h^{\min\{s,\mathfrak{s}\}} \quad \forall h < h_{\dagger},
\end{align*}
where $\mathfrak{s}\in(0,1]$ and $s\in[0,\mathfrak{t})$ with $\mathfrak{t}$ given as in Theorem \ref{thm:error_estimate}.
\end{theorem}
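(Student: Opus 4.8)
The plan is to treat \eqref{eq:aux_estimate} of Lemma \ref{lemma:aux_estimate} as the starting point, since it already packages the coercivity coming from the second-order condition \eqref{eq:second_order_equivalent} and the fact that $\mathbf{u}^{*}_h-\mathbf{u}^{*}\in\mathbf{C}_{\mathbf{u}^{*}}^{\tau}$ for $h$ small. The whole argument then reduces to a consistency estimate between the continuous and discrete reduced gradients evaluated at the \emph{same} control $\mathbf{u}^{*}_h$. Concretely, for $h<h_\dagger$ I would begin from
\[
\tfrac{\nu}{2}\|\mathbf{u}^{*}-\mathbf{u}^{*}_h\|_{\mathbb{R}^{\ell}}^2 \leq [j'(\mathbf{u}^{*}_h)-j'(\mathbf{u}^{*})](\mathbf{u}^{*}_h-\mathbf{u}^{*})
\]
and insert the two variational inequalities. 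Testing the continuous inequality \eqref{eq:variational_inequality} with $\mathbf{u}=\mathbf{u}^{*}_h\in U_{ad}$ gives $j'(\mathbf{u}^{*})(\mathbf{u}^{*}_h-\mathbf{u}^{*})\geq 0$, while testing the discrete condition $j_h'(\mathbf{u}^{*}_h)(\mathbf{u}-\mathbf{u}^{*}_h)\geq 0$ with $\mathbf{u}=\mathbf{u}^{*}\in U_{ad}$ gives $j_h'(\mathbf{u}^{*}_h)(\mathbf{u}^{*}_h-\mathbf{u}^{*})\leq 0$. Combining these two sign conditions collapses the right-hand side to
\[
\tfrac{\nu}{2}\|\mathbf{u}^{*}-\mathbf{u}^{*}_h\|_{\mathbb{R}^{\ell}}^2 \leq [j'(\mathbf{u}^{*}_h)-j_h'(\mathbf{u}^{*}_h)](\mathbf{u}^{*}_h-\mathbf{u}^{*}),
\]
and a Cauchy--Schwarz inequality in $\mathbb{R}^{\ell}$, identifying each reduced gradient with its representing vector, yields $\tfrac{\nu}{2}\|\mathbf{u}^{*}-\mathbf{u}^{*}_h\|_{\mathbb{R}^{\ell}}\leq \|j'(\mathbf{u}^{*}_h)-j_h'(\mathbf{u}^{*}_h)\|_{\mathbb{R}^{\ell}}$.

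It then remains to bound $\|j'(\mathbf{u}^{*}_h)-j_h'(\mathbf{u}^{*}_h)\|_{\mathbb{R}^{\ell}}$. Using the explicit form of the reduced gradient implicit in \eqref{eq:var_ineq_with_adj_state}, the $\alpha\mathbf{u}^{*}_h$ contributions cancel and the $k$-th component of the difference equals $\omega^{2}\mathfrak{Re}\{\int_{\Omega_{k}}\bsiep(\by\cdot\bp-\by^{*}_h\cdot\bp^{*}_h)\}$, where $\by=\mathcal{S}\mathbf{u}^{*}_h$ and $\bp$ is the continuous adjoint associated with $(\mathbf{u}^{*}_h,\by)$, whereas $\by^{*}_h=\mathcal{S}_h\mathbf{u}^{*}_h$ and $\bp^{*}_h$ are their discrete counterparts. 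Adding and subtracting $\by^{*}_h\cdot\bp$ and summing over $k$, I would bound
\[
\|j'(\mathbf{u}^{*}_h)-j_h'(\mathbf{u}^{*}_h)\|_{\mathbb{R}^{\ell}}\lesssim \|\bsiep\|_{\rL^{\infty}(\Omega;\mathbb{C})}\big(\|\by-\by^{*}_h\|_{\Omega}\,\|\bp\|_{\Omega}+\|\by^{*}_h\|_{\Omega}\,\|\bp-\bp^{*}_h\|_{\Omega}\big).
\]
Here $\|\bp\|_{\Omega}$ and $\|\by^{*}_h\|_{\Omega}$ are controlled uniformly in $h$ by the stability bounds of section \ref{sec:model_problem} and \eqref{eq:stab_adj_eq}; and since $\by^{*}_h$ is precisely the finite element approximation of $\by$ at the fixed control $\mathbf{u}^{*}_h$, assumption \eqref{eq:assumption_f_and_PW} and Theorem \ref{thm:error_estimate} give $\|\by-\by^{*}_h\|_{\Omega}\lesssim h^{s}$.

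The main obstacle is the adjoint term $\|\bp-\bp^{*}_h\|_{\Omega}$, because $\bp$ and $\bp^{*}_h$ solve the continuous and discrete adjoint equations with \emph{different} state data ($\by$ versus $\by^{*}_h$) on their right-hand sides, so it is not covered verbatim by Proposition \ref{prop:error_estimate_adj} (which compares adjoints at two different controls). I would resolve it by mimicking that proof: introduce the Galerkin approximation $\hat{\bp}_h\in\mathbf{V}(\mathscr{T}_h)$ of $\bp$ and split $\|\bp-\bp^{*}_h\|_{\Omega}\leq\|\bp-\hat{\bp}_h\|_{\Omega}+\|\hat{\bp}_h-\bp^{*}_h\|_{\Omega}$. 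The first term is $\lesssim h^{\mathfrak{s}}$ by assumption \eqref{eq:assumption_p-ph} applied with control $\mathbf{u}^{*}_h$, while $\hat{\bp}_h-\bp^{*}_h\in\mathbf{V}(\mathscr{T}_h)$ solves a discrete adjoint problem whose right-hand side involves $\by-\by^{*}_h$ in both its $\brL^{2}$ and $\cu$ parts, so a discrete stability estimate together with Theorem \ref{thm:error_estimate} yields $\|\hat{\bp}_h-\bp^{*}_h\|_{\Omega}\lesssim\|\by-\by^{*}_h\|_{\mathbf{H}(\cu,\Omega)}\lesssim h^{s}$. Collecting the bounds gives $\|j'(\mathbf{u}^{*}_h)-j_h'(\mathbf{u}^{*}_h)\|_{\mathbb{R}^{\ell}}\lesssim h^{\min\{s,\mathfrak{s}\}}$, and substituting this into the inequality from the first paragraph proves $\|\mathbf{u}^{*}-\mathbf{u}^{*}_h\|_{\mathbb{R}^{\ell}}\lesssim h^{\min\{s,\mathfrak{s}\}}$ for all $h<h_\dagger$.
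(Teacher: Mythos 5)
Your proposal is correct and follows essentially the same route as the paper's proof: starting from Lemma \ref{lemma:aux_estimate}, collapsing the right-hand side to $[j'(\mathbf{u}^{*}_h)-j_h'(\mathbf{u}^{*}_h)](\mathbf{u}^{*}_h-\mathbf{u}^{*})$ via the two first-order conditions, expressing this difference through the states and adjoints at the fixed control $\mathbf{u}^{*}_h$, and handling the adjoint discrepancy by introducing the Galerkin approximation of $\bp_{\mathbf{u}^{*}_{h}}$ exactly as the paper does (the paper's intermediate function $\hat{\mathsf{p}}_{h}$ plays the role of your $\hat{\bp}_h$). Your observation that Proposition \ref{prop:error_estimate_adj} does not apply verbatim and must be re-derived with the split $\|\bp-\hat{\bp}_h\|_{\Omega}+\|\hat{\bp}_h-\bp^{*}_h\|_{\Omega}$ is precisely the step the paper carries out.
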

\begin{proof}
Invoke estimate \eqref{eq:aux_estimate},  the variational inequality \eqref{eq:variational_inequality} with $\mathbf{u} = \mathbf{u}^{*}_{h}$, and inequality  $-j_{h}^{\prime}(\mathbf{u}^{*}_{h})(\mathbf{u}^{*}_{h} - \mathbf{u}^{*} ) \geq  0$ to obtain
\begin{equation*}
\frac{\nu}{2}
\|\mathbf{u}^{*} - \mathbf{u}^{*}_{h}\|_{\mathbb{R}^{\ell}}^{2} \leq 
[j'(\mathbf{u}^{*}_{h}) - j'(\mathbf{u}^{*})](\mathbf{u}^{*}_{h} - \mathbf{u}^{*}) 
\leq  
[j'(\mathbf{u}^{*}_{h}) - j_{h}'(\mathbf{u}^{*}_{h})](\mathbf{u}^{*}_{h} - \mathbf{u}^{*}).
\end{equation*}
A direct computation reveals that 
\begin{align*}
[j'(\mathbf{u}^{*}_{h}) - j_{h}'(\mathbf{u}^{*}_{h})](\mathbf{u}^{*}_{h} - \mathbf{u}^{*}) = \omega^{2}\sum_{k=1}^{\ell}\mathfrak{Re}\left\{\int_{\Omega_{k}}\bsiep(\by_{\mathbf{u}^{*}_{h}}\cdot\bp_{\mathbf{u}^{*}_{h}} - \by^{*}_{h}\cdot\bp^{*}_{h})\right\}(\mathbf{u}^{*}_{h} - \mathbf{u}^{*}) _{k},
\end{align*}
where $\by_{\mathbf{u}^{*}_{h}}\in \mathbf{H}_0(\cu,\Omega)$ corresponds to the unique solution to problem \eqref{eq:weak_st_eq} with $\mathbf{u}=\mathbf{u}^{*}_{h}$, and $\bp_{\mathbf{u}^{*}_{h}}\in \mathbf{H}_0(\cu,\Omega)$ is the unique solution to problem \eqref{eq:adj_eq} with $\mathbf{u}=\mathbf{u}^{*}_{h}$ and $\by=\by_{\mathbf{u}^{*}_{h}}$. 
Hence, by proceeding as in Step 1 of the proof of Lemma \ref{lemma:aux_estimate} we obtain
\begin{align}\label{eq:estimate_uh-u_mu}
\frac{\nu}{2}\|\mathbf{u}^{*}_{h} - \mathbf{u}^{*}\|_{\mathbb{R}^{\ell}}
\lesssim
\|\by^{*}_{h} - \by_{\mathbf{u}^{*}_{h}}\|_{\Omega}\|\bp_{\mathbf{u}^{*}_{h}}\|_{\Omega} + \|\by^{*}_{h}\|_{\Omega}\|\bp^{*}_{h}  - \bp_{\mathbf{u}^{*}_{h}}\|_{\Omega}.
\end{align}
Using, in \eqref{eq:estimate_uh-u_mu}, the stability bounds $\|\by^{*}_{h}\|_{\Omega}\lesssim \|\bbf\|_{\Omega}$ and $\|\bp_{\mathbf{u}^{*}_{h}}\|_{\Omega} \lesssim \|\by_{\Omega}\|_{\Omega} + \|\mathbf{E}_{\Omega}\|_{\Omega} +  \|\bbf\|_{\Omega}$ in combination with the a priori error estimate from Theorem \ref{thm:error_estimate} we arrive at
\begin{equation}\label{eq:estimate_u-uh_w_p}
\|\mathbf{u}^{*} - \mathbf{u}^{*}_{h}\|_{\mathbb{R}^{\ell}} 
\lesssim
h^{s} + \|\bp^{*}_{h}  - \bp_{\mathbf{u}^{*}_{h}}\|_{\Omega}.
\end{equation}
We now bound $\|\bp^{*}_{h} - \bp_{\mathbf{u}^{*}_{h}}\|_{\Omega}$. 
We introduce $\hat{\mathsf{p}}_{h} \in \mathbf{V}(\mathscr{T}_{h})$, defined as the finite element approximation of $\bp_{\mathbf{u}^{*}_{h}}$. The use of the triangle inequality and assumption \eqref{eq:assumption_p-ph} yield
\begin{align*}
\|\bp^{*}_{h} - \bp_{\mathbf{u}^{*}_{h}}\|_{\Omega}
\leq 
\|\bp^{*}_{h} - \hat{\mathsf{p}}_{h}\|_{\Omega} + \|\hat{\mathsf{p}}_{h} - \bp_{\mathbf{u}^{*}_{h}}\|_{\Omega} 
\lesssim
\|\bp^{*}_{h} - \hat{\mathsf{p}}_{h}\|_{\Omega} + h^{\mathfrak{s}}.
\end{align*}
We notice that $\bp^{*}_{h} - \hat{\mathsf{p}}_{h}\in \mathbf{V}(\mathscr{T}_{h})$ solves the discrete problem
\begin{align*}
&(\mu^{-1}\cu (\bp^{*}_{h} - \hat{\mathsf{p}}_{h}), \cu \bw_{h})_{\Omega} - \omega^2((\bsiep\cdot \mathbf{u}^{*}_{h})(\bp^{*}_{h} - \hat{\mathsf{p}}_{h}),\bw_{h})_{\Omega} \\ 
&\qquad \qquad = (\overline{\by^{*}_{h} - \by_{\mathbf{u}^{*}_{h}}
},\bw_{h})_{\Omega} + (\overline{\cu (\by^{*}_{h} - \by_{\mathbf{u}^{*}_{h}})}, \cu \bw_{h})_{\Omega} \quad \forall \bw_{h}\in \mathbf{V}(\mathscr{T}_{h}).
\end{align*}
The stability of this problem provides the bound $\|\bp^{*}_{h} - \hat{\mathsf{p}}_{h}\|_{\Omega} \lesssim \|\by^{*}_{h} - \by_{\mathbf{u}^{*}_{h}}\|_{\mathbf{H}_0(\cu,\Omega)}\lesssim h^{s}$, upon using the error estimate from Theorem \ref{thm:error_estimate}. We have thus concluded that $\|\bp^{*}_{h} - \bp_{\mathbf{u}^{*}_{h}}\|_{\Omega} \lesssim h^{\min\{s,\mathfrak{s}\}}$ which, in light of \eqref{eq:estimate_u-uh_w_p}, concludes the proof.
\end{proof}

For the last result of this section, we assume that there exist $\tilde{\mathfrak{s}}\in(0,1]$, such that
\begin{align}\label{eq:assumption_p-ph_curl}
\|\cu(\bp - \bp_{h}) \|_{\Omega} \lesssim h^{\tilde{\mathfrak{s}}},
\end{align}
where $\bp\in \mathbf{H}_{0}(\cu,\Omega)$ is the solution of problem \eqref{eq:adj_eq} and $\bp_{h}\in\mathbf{V}(\mathscr{T}_{h})$ corresponds to its finite element approximation.

\begin{corollary}[error estimate]\label{coro:error_estimate}
Let $\mathbf{u}^{*}\in U_{ad}$ such that it satisfies the second-order optimality condition \eqref{eq:second_order_equivalent}. If assumptions \eqref{eq:assumption_f_and_PW}, \eqref{eq:assumption_p-ph}, and \eqref{eq:assumption_p-ph_curl} hold, then there exists $h_{\dagger} > 0$ such that 
\begin{equation}\label{eq:total_error_apriori}
\|\mathbf{u}^{*}-\mathbf{u}^{*}_h\|_{\mathbb{R}^{\ell}} + \|\by^{*} - \by^{*}_{h}\|_{\mathbf{H}(\cu,\Omega)} + \|\bp^{*} - \bp^{*}_{h}\|_{\mathbf{H}(\cu,\Omega)} \lesssim h^{\min\{s,\mathfrak{s},\tilde{\mathfrak{s}}\}} \quad \forall h < h_{\dagger}.
\end{equation}
\end{corollary}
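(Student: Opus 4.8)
The plan is to treat the three error contributions in \eqref{eq:total_error_apriori} separately, reusing what is already established. The control bound $\|\mathbf{u}^{*}-\mathbf{u}^{*}_h\|_{\mathbb{R}^{\ell}} \lesssim h^{\min\{s,\mathfrak{s}\}}$ is exactly the content of Theorem \ref{thm:a_priori_estimate_ocp}, so nothing new is needed there. For the state error I would invoke Lemma \ref{lemma:error_estim_st} with $\mathbf{u}=\mathbf{u}^{*}$ and $\mathbf{u}_{h}=\mathbf{u}^{*}_{h}$, which gives $\|\by^{*} - \by^{*}_{h}\|_{\mathbf{H}(\cu,\Omega)} \lesssim h^{s} + \|\mathbf{u}^{*}-\mathbf{u}^{*}_h\|_{\mathbb{R}^{\ell}}$; combined with the control bound this yields $\|\by^{*} - \by^{*}_{h}\|_{\mathbf{H}(\cu,\Omega)} \lesssim h^{\min\{s,\mathfrak{s}\}}$. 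Since $\min\{s,\mathfrak{s}\}\geq\min\{s,\mathfrak{s},\tilde{\mathfrak{s}}\}$, for $h<1$ this is in particular $\lesssim h^{\min\{s,\mathfrak{s},\tilde{\mathfrak{s}}\}}$.

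The only genuinely new piece of work is the adjoint error measured in the full $\mathbf{H}(\cu,\Omega)$ norm, since Proposition \ref{prop:error_estimate_adj} only controls the $\brL^2$ part $\|\bp^{*} - \bp^{*}_{h}\|_{\Omega}$. I would repeat the argument of Proposition \ref{prop:error_estimate_adj} verbatim but keep the stronger norm throughout. Introducing the finite element approximation $\mathsf{p}_{h}\in\mathbf{V}(\mathscr{T}_{h})$ of $\bp^{*}$ defined in \eqref{eq:aux_ph}, the triangle inequality gives
\begin{align*}
\|\bp^{*} - \bp^{*}_{h}\|_{\mathbf{H}(\cu,\Omega)} \lesssim \|\bp^{*} - \mathsf{p}_{h}\|_{\mathbf{H}(\cu,\Omega)} + \|\mathsf{p}_{h} - \bp^{*}_{h}\|_{\mathbf{H}(\cu,\Omega)}.
\end{align*}
For the first term, assumptions \eqref{eq:assumption_p-ph} and \eqref{eq:assumption_p-ph_curl} applied with $\bp=\bp^{*}$ bound $\|\bp^{*} - \mathsf{p}_{h}\|_{\Omega}\lesssim h^{\mathfrak{s}}$ and $\|\cu(\bp^{*} - \mathsf{p}_{h})\|_{\Omega}\lesssim h^{\tilde{\mathfrak{s}}}$, whence $\|\bp^{*} - \mathsf{p}_{h}\|_{\mathbf{H}(\cu,\Omega)}\lesssim h^{\min\{\mathfrak{s},\tilde{\mathfrak{s}}\}}$. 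For the second term, $\mathsf{p}_{h} - \bp^{*}_{h}$ solves exactly the discrete problem written in Proposition \ref{prop:error_estimate_adj}, whose right-hand side depends on $\overline{\by^{*} - \by^{*}_{h}}$, $\overline{\cu(\by^{*} - \by^{*}_{h})}$, and $\mathbf{u}^{*}-\mathbf{u}^{*}_h$; discrete well-posedness (Lax--Milgram) then yields $\|\mathsf{p}_{h} - \bp^{*}_{h}\|_{\mathbf{H}(\cu,\Omega)} \lesssim \|\by^{*} - \by^{*}_{h}\|_{\mathbf{H}(\cu,\Omega)} + \|\mathbf{u}^{*}-\mathbf{u}^{*}_h\|_{\mathbb{R}^{\ell}} \lesssim h^{\min\{s,\mathfrak{s}\}}$, using the state bound just obtained together with the stability of $\bp^{*}_{h}$.

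Collecting the two contributions gives $\|\bp^{*} - \bp^{*}_{h}\|_{\mathbf{H}(\cu,\Omega)} \lesssim h^{\min\{s,\mathfrak{s},\tilde{\mathfrak{s}}\}}$, and adding the control and state bounds concludes \eqref{eq:total_error_apriori}. The main subtlety — and the reason the extra hypothesis \eqref{eq:assumption_p-ph_curl} is needed — is precisely the control of $\cu(\bp^{*} - \mathsf{p}_{h})$: the $\brL^2$ regularity exploited in Proposition \ref{prop:error_estimate_adj} is insufficient to bound the curl of the adjoint interpolation error, so a separate rate $\tilde{\mathfrak{s}}$ must be assumed rather than derived. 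Everything else is bookkeeping of exponents, using that for $h<1$ a smaller exponent produces a larger power, so that each partial rate $h^{\min\{s,\mathfrak{s}\}}$ and $h^{\min\{\mathfrak{s},\tilde{\mathfrak{s}}\}}$ is dominated by $h^{\min\{s,\mathfrak{s},\tilde{\mathfrak{s}}\}}$.
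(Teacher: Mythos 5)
Your proof is correct, and it follows the same skeleton as the paper (control error from Theorem \ref{thm:a_priori_estimate_ocp}, then state and adjoint errors by triangle inequalities through auxiliary variables plus stability), but the decomposition you use for the adjoint term is the mirror image of the paper's. You insert the \emph{discrete} auxiliary variable $\mathsf{p}_{h}$ of \eqref{eq:aux_ph}, i.e.\ the Galerkin approximation of $\bp^{*}$, so that assumptions \eqref{eq:assumption_p-ph} and \eqref{eq:assumption_p-ph_curl} are applied to the pair $(\bp^{*},\mathsf{p}_{h})$ and the remainder $\mathsf{p}_{h}-\bp^{*}_{h}$ is handled by discrete stability, exactly as in Proposition \ref{prop:error_estimate_adj} but keeping the full $\mathbf{H}(\cu,\Omega)$ norm. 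The paper instead inserts the \emph{continuous} auxiliary variable $\mathsf{p}\in\mathbf{H}_0(\cu,\Omega)$, defined as the solution of \eqref{eq:adj_eq} with $\mathbf{u}=\mathbf{u}^{*}_{h}$ and $\by=\by^{*}_{h}$: the term $\bp^{*}-\mathsf{p}$ is then controlled by continuous stability, $\|\bp^{*}-\mathsf{p}\|_{\mathbf{H}(\cu,\Omega)}\lesssim \|\mathbf{u}^{*}-\mathbf{u}^{*}_h\|_{\mathbb{R}^{\ell}}+\|\by^{*}-\by^{*}_{h}\|_{\mathbf{H}(\cu,\Omega)}$, while the assumptions are applied to the pair $(\mathsf{p},\bp^{*}_{h})$, since $\bp^{*}_{h}$ in \eqref{eq:discrete_adjoint_equation} is precisely the Galerkin approximation of $\mathsf{p}$. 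The two routes are equally long, but yours has a small advantage in rigor: assumptions \eqref{eq:assumption_p-ph} and \eqref{eq:assumption_p-ph_curl} are stated for adjoint problems whose data are $(\mathbf{u},\mathcal{S}\mathbf{u})$ with $\mathbf{u}\in U_{ad}$, which your pair $(\bp^{*},\mathsf{p}_{h})$ satisfies verbatim (take $\mathbf{u}=\mathbf{u}^{*}$), whereas the paper's pair $(\mathsf{p},\bp^{*}_{h})$ involves the discrete state $\by^{*}_{h}$ as data and therefore uses the assumptions in a slightly extended form. Your treatment of the state error via Lemma \ref{lemma:error_estim_st} (rather than the paper's auxiliary $\by_{\mathbf{u}^{*}_{h}}$ and Theorem \ref{thm:error_estimate}) is likewise equivalent in substance, and your exponent bookkeeping, including the role of \eqref{eq:assumption_p-ph_curl} as the only source of the rate $\tilde{\mathfrak{s}}$, matches the paper's conclusion.
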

\begin{proof}
Since the bound for $\|\mathbf{u}^{*}-\mathbf{u}^{*}_h\|_{\mathbb{R}^{\ell}}$ follows from Theorem \ref{thm:a_priori_estimate_ocp}, we concentrate on the remaining terms on the left-hand side of \eqref{eq:total_error_apriori}. To estimate $\|\by^{*} - \by^{*}_{h}\|_{\mathbf{H}(\cu,\Omega)}$ we invoke the auxiliary variable $\by_{\mathbf{u}^{*}_{h}}\in \mathbf{H}_0(\cu,\Omega)$, defined as the unique solution to problem \eqref{eq:weak_st_eq} with $\mathbf{u}=\mathbf{u}^{*}_{h}$, and the triangle inequality to obtain
\begin{align*}
\|\by^{*} - \by^{*}_{h}\|_{\mathbf{H}(\cu,\Omega)}
\leq 
\|\by^{*} - \by^{*}_{\mathbf{u}^{*}_{h}}\|_{\mathbf{H}(\cu,\Omega)} + \|\by^{*}_{\mathbf{u}^{*}_{h}} - \by^{*}_{h}\|_{\mathbf{H}(\cu,\Omega)}.
\end{align*}
The error estimate from Theorem \ref{thm:error_estimate} in conjunction with the stability estimate $\|\by^{*} - \by^{*}_{\mathbf{u}^{*}_{h}}\|_{\mathbf{H}(\cu,\Omega)} \lesssim \|\mathbf{u}^{*}-\mathbf{u}^{*}_h\|_{\mathbb{R}^{\ell}}$ immediately yield $\|\by^{*} - \by^{*}_{h}\|_{\mathbf{H}(\cu,\Omega)} \lesssim h^{\min\{s,\mathfrak{s}\}}$ for all $h < h_{\dagger}$. To bound $\|\bp^{*} - \bp^{*}_{h}\|_{\mathbf{H}(\cu,\Omega)}$, we introduce $\mathsf{p}\in \mathbf{H}_0(\cu,\Omega)$ as the unique solution to problem \eqref{eq:adj_eq} with $\mathbf{u}=\mathbf{u}^{*}_{h}$ and $\by=\by^{*}_{h}$. We thus can write
\begin{align*}
\|\bp^{*} - \bp^{*}_{h}\|_{\mathbf{H}(\cu,\Omega)}
\leq 
\|\bp^{*} - \mathsf{p}\|_{\mathbf{H}(\cu,\Omega)} + \|\mathsf{p} - \bp^{*}_{h}\|_{\mathbf{H}(\cu,\Omega)},
\end{align*}
and utilize assumptions \eqref{eq:assumption_p-ph} and \eqref{eq:assumption_p-ph_curl}, the bound $\|\bp^{*} - \mathsf{p}\|_{\mathbf{H}(\cu,\Omega)} \lesssim \|\mathbf{u}^{*}-\mathbf{u}^{*}_h\|_{\mathbb{R}^{\ell}} + \|\by^{*} - \by^{*}_{h}\|_{\mathbf{H}(\cu,\Omega)}$, and the estimates proved for $\|\mathbf{u}^{*}-\mathbf{u}^{*}_h\|_{\mathbb{R}^{\ell}}$ and $\|\by^{*} - \by^{*}_{h}\|_{\mathbf{H}(\cu,\Omega)}$. These arguments yield that $\|\bp^{*} - \bp^{*}_{h}\|_{\mathbf{H}(\cu,\Omega)} \lesssim h^{\min\{s,\mathfrak{s},\tilde{\mathfrak{s}}\}}$ for all $h < h_{\dagger}$.
\end{proof}


\subsection{A posteriori error estimates}

In this section, we devise an a posteriori error estimator for the optimal control problem \eqref{eq:weak_min_problem}--\eqref{eq:weak_st_eq} and study its reliability and efficiency properties. We recall that, in this context, the parameter $h$ should be interpreted as $h = 1/n$, where $n\in\mathbb{N}$ is the index set in a sequence of refinements  of an initial mesh $\T_{\text{in}}$; see section \ref{sec:apost_st_eq}.
 
We start with an instrumental result for our a posteriori error analysis. 

\begin{lemma}[auxiliary estimate]
Let $\mathbf{u}^{*}\in U_{ad}$ be such that it satisfies the second-order optimality condition \eqref{eq:second_order_equivalent}. Let $C_{L}>0$ and $\nu>0$ be the constants appearing in \eqref{eq:Lipschitz_property} and \eqref{eq:second_order_equivalent}, respectively. Assume that
\begin{align}\label{eq:assumption_mesh}
\mathbf{u}^{*}_{h} - \mathbf{u}^{*}\in \mathbf{C}_{\mathbf{u}^{*}}^{\tau} \qquad \text{ and } \qquad \|\mathbf{u}^{*}_{h} - \mathbf{u}^{*}\|_{\mathbb{R}^{\ell}} \leq \nu/(2C_{L}).
\end{align}
Then, we have 
\begin{align}\label{eq:aux_estimate_apost}
\frac{\nu}{2}\|\mathbf{u}^{*} - \mathbf{u}^{*}_h\|_{\mathbb{R}^{\ell}}^2 \leq [j'(\mathbf{u}^{*}_h)-j'(\mathbf{u}^{*})](\mathbf{u}^{*}_h-\mathbf{u}^{*}).
\end{align}
\end{lemma}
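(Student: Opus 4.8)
The plan is to combine the second-order sufficient condition in its equivalent coercive form \eqref{eq:second_order_equivalent} with the mean value theorem applied to $j'$, exactly as in the first half of Lemma~\ref{lemma:aux_estimate}, and then to absorb the error term using the local Lipschitz bound \eqref{eq:Lipschitz_property} under the hypothesis $\|\mathbf{u}^{*}_{h} - \mathbf{u}^{*}\|_{\mathbb{R}^{\ell}} \leq \nu/(2C_{L})$. The crucial simplification compared to Lemma~\ref{lemma:aux_estimate} is that here I do \emph{not} need to prove that $\mathbf{u}^{*}_{h} - \mathbf{u}^{*}$ belongs to the cone $\mathbf{C}_{\mathbf{u}^{*}}^{\tau}$, since this is now assumed directly in \eqref{eq:assumption_mesh}; this is what makes the result ``instrumental'' for the a posteriori setting, where no smallness of $h$ (and hence no passage to the limit) is available.

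First I would invoke the assumption $\mathbf{u}^{*}_{h} - \mathbf{u}^{*}\in \mathbf{C}_{\mathbf{u}^{*}}^{\tau}$ from \eqref{eq:assumption_mesh}, which entitles me to set $\mathbf{v}=\mathbf{u}^{*}_{h} - \mathbf{u}^{*}$ in the coercivity estimate \eqref{eq:second_order_equivalent}, yielding
\begin{align*}
\nu \|\mathbf{u}^{*}_{h} - \mathbf{u}^{*}\|_{\mathbb{R}^{\ell}}^2 \leq j''(\mathbf{u}^{*})(\mathbf{u}^{*}_{h} - \mathbf{u}^{*})^2.
\end{align*}
Next I would apply the mean value theorem to the scalar map $t\mapsto j'(\mathbf{u}^{*}+t(\mathbf{u}^{*}_{h}-\mathbf{u}^{*}))(\mathbf{u}^{*}_{h}-\mathbf{u}^{*})$ to write, with $\mathbf{u}^{*}_{\theta}=\mathbf{u}^{*}+\theta_{h}(\mathbf{u}^{*}_{h} - \mathbf{u}^{*})$ and $\theta_{h}\in(0,1)$, the identity
\begin{align*}
[j'(\mathbf{u}^{*}_{h})-j'(\mathbf{u}^{*})](\mathbf{u}^{*}_{h} - \mathbf{u}^{*})=j''(\mathbf{u}^{*}_{\theta})(\mathbf{u}^{*}_{h} - \mathbf{u}^{*})^2.
\end{align*}

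Combining these two relations, as in \eqref{eq:ineq_j''_u_uh}, I obtain
\begin{align*}
\nu \|\mathbf{u}^{*}_{h} - \mathbf{u}^{*}\|_{\mathbb{R}^{\ell}}^2 \leq [j'(\mathbf{u}^{*}_{h})-j'(\mathbf{u}^{*})](\mathbf{u}^{*}_{h} - \mathbf{u}^{*}) + [j''(\mathbf{u}^{*})-j''(\mathbf{u}^{*}_{\theta})](\mathbf{u}^{*}_{h} - \mathbf{u}^{*})^2.
\end{align*}
The final step is to control the last term: applying the Lipschitz estimate \eqref{eq:Lipschitz_property} with $\mathbf{u}_{1}=\mathbf{u}^{*}$, $\mathbf{u}_{2}=\mathbf{u}^{*}_{\theta}$, and $\mathbf{h}=\mathbf{u}^{*}_{h}-\mathbf{u}^{*}$, together with $\|\mathbf{u}^{*}-\mathbf{u}^{*}_{\theta}\|_{\mathbb{R}^{\ell}}=\theta_{h}\|\mathbf{u}^{*}_{h}-\mathbf{u}^{*}\|_{\mathbb{R}^{\ell}}\leq\|\mathbf{u}^{*}_{h}-\mathbf{u}^{*}\|_{\mathbb{R}^{\ell}}$, gives the bound $C_{L}\|\mathbf{u}^{*}_{h}-\mathbf{u}^{*}\|_{\mathbb{R}^{\ell}}^3$, which by the smallness hypothesis $\|\mathbf{u}^{*}_{h}-\mathbf{u}^{*}\|_{\mathbb{R}^{\ell}}\leq\nu/(2C_{L})$ is at most $\tfrac{\nu}{2}\|\mathbf{u}^{*}_{h}-\mathbf{u}^{*}\|_{\mathbb{R}^{\ell}}^2$. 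Subtracting this from both sides yields \eqref{eq:aux_estimate_apost}. The only delicate point, as opposed to a genuine obstacle, is to record carefully that the Lipschitz constant $C_{L}$ and coercivity constant $\nu$ are precisely those fixed in \eqref{eq:Lipschitz_property} and \eqref{eq:second_order_equivalent}, so that the threshold $\nu/(2C_{L})$ in \eqref{eq:assumption_mesh} closes the argument with no hidden constants; everything else is a direct adaptation of Step~2 of Lemma~\ref{lemma:aux_estimate}.
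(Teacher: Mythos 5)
Your proposal is correct and follows essentially the same route as the paper's proof: plug $\mathbf{v}=\mathbf{u}^{*}_{h}-\mathbf{u}^{*}$ into the coercivity condition \eqref{eq:second_order_equivalent} (licensed directly by assumption \eqref{eq:assumption_mesh}), apply the mean value theorem to $j'$, and absorb the difference $[j''(\mathbf{u}^{*})-j''(\mathbf{u}^{*}_{\theta})](\mathbf{u}^{*}_{h}-\mathbf{u}^{*})^2$ via the Lipschitz bound \eqref{eq:Lipschitz_property} and the smallness threshold $\nu/(2C_{L})$. Your explicit tracking of $\|\mathbf{u}^{*}-\mathbf{u}^{*}_{\theta}\|_{\mathbb{R}^{\ell}}=\theta_{h}\|\mathbf{u}^{*}_{h}-\mathbf{u}^{*}\|_{\mathbb{R}^{\ell}}\leq\|\mathbf{u}^{*}_{h}-\mathbf{u}^{*}\|_{\mathbb{R}^{\ell}}$ is exactly the step the paper invokes when citing $\theta_{h}\in(0,1)$, so the two arguments coincide.
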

\begin{proof}
Since $\mathbf{u}^{*}_{h} - \mathbf{u}^{*}\in \mathbf{C}_{\mathbf{u}^{*}}^{\tau}$, we can use $\mathbf{v}=\mathbf{u}^{*}_{h} - \mathbf{u}^{*}$ in the second-order sufficient optimality condition \eqref{eq:second_order_equivalent} to obtain
\begin{align}\label{eq:diff_1}
\nu\|\mathbf{u}^{*}_{h} - \mathbf{u}^{*}\|_{\mathbb{R}^{\ell}}^2 \leq j''(\mathbf{u}^{*})(\mathbf{u}^{*}_{h} - \mathbf{u}^{*})^2.
\end{align}
On the other hand, the use of the mean value theorem yields $(j'(\mathbf{u}^{*}_{h})-j'(\mathbf{u}^{*}))(\mathbf{u}^{*}_{h}-\mathbf{u}^{*})=j''(\mathbf{u}_{\theta}^{*})(\mathbf{u}^{*}_{h}-\mathbf{u}^{*})^2$ with $\mathbf{u}_{\theta}^{*}=\mathbf{u}^{*}+\theta_{h}(\mathbf{u}^{*}_{h} - \mathbf{u}^{*})$ and $\theta_{h} \in (0,1)$. Consequently, from inequality \eqref{eq:diff_1} we arrive at
\begin{align}\label{eq:ineq_u_h_baru}
\nu\|\mathbf{u}^{*}_h - \mathbf{u}^{*}\|_{\mathbb{R}^{\ell}}^2 
\leq (j'(\mathbf{u}^{*}_h)-j'(\mathbf{u}^{*}))(\mathbf{u}^{*}_h - \mathbf{u}^{*}) + (j''(\mathbf{u}^{*})-j''(\mathbf{u}_{\theta}^{*}))(\mathbf{u}^{*}_h - \mathbf{u}^{*}_h)^2.
\end{align}
To control the term $(j''(\mathbf{u}^{*})-j''(\mathbf{u}_{\theta}^{*}))(\mathbf{u}^{*}_h - \mathbf{u}^{*}_h)^2$ in \eqref{eq:ineq_u_h_baru}, we use estimate \eqref{eq:Lipschitz_property}, the fact that $\theta_{h} \in (0,1)$, and assumption \eqref{eq:assumption_mesh}. These arguments lead to
\begin{align*}
(j''(\mathbf{u}^{*})-j''(\mathbf{u}_{\theta}^{*}))(\mathbf{u}^{*}_{h} - \mathbf{u}^{*})^2  
\leq
C_{L}\|\mathbf{u}^{*}_{h} - \mathbf{u}^{*}\|_{\mathbb{R}^{\ell}}\|\mathbf{u}^{*}_{h} - \mathbf{u}^{*}\|_{\mathbb{R}^{\ell}}^2
\leq 
\frac{\nu}{2}\|\mathbf{u}^{*}_{h} - \mathbf{u}^{*}\|_{\mathbb{R}^{\ell}}^2.
\end{align*}
Using the latter estimation in \eqref{eq:diff_1} yields the desired inequality \eqref{eq:aux_estimate_apost}. 
\end{proof}

\subsubsection{Global reliability analysis}\label{sec:glob_rel_ana}

In the present section we prove an upper bound for the total error approximation in terms of a proposed a posteriori error estimator. The analysis relies on estimates on the error between a solution to the discrete optimal control problem and auxiliary variables that we define in what follows. 

We first define the variable $\by_{\mathbf{u}^{*}_{h}}\in \mathbf{H}_{0}(\cu,\Omega)$ as the unique solution to problem \eqref{eq:weak_st_eq} with $\mathbf{u}=\mathbf{u}^{*}_{h}$. We thus introduce, for $T\in\T_{h}$, the local error indicator associated to the discrete state equation: $\mathcal{E}_{st,T}^2:=\mathcal{E}_{T,1}^2 +\mathcal{E}_{T,2}^2$, where $\mathcal{E}_{T,1}$ and $\mathcal{E}_{T,2}$ are given by
\begin{align*}
\mathcal{E}_{T,1}^2
:= \, &
h_{T}^2\|\di(\boldsymbol{f} +\omega^2(\bsiep\cdot\mathbf{u}^{*}_{h})\by^{*}_{h})\|_{T}^2 + \dfrac{h_{T}}{2}\sum_{S\in\mathscr{S}_T^I}\left\|\jump{(\bbf + \omega^2(\bsiep\cdot\mathbf{u}^{*}_{h})\by^{*}_{h})\cdot\boldsymbol{n}}\right\|_{S}^{2},\\
\mathcal{E}_{T,2}^2
:= \, &
h_{T}^2\left\|\bbf-\cu(\mu^{-1}\cu\by^{*}_h)+\omega^2(\bsiep\cdot\mathbf{u}^{*}_{h})\by^{*}_{h}\right\|_{T}^2 \\
& ~ +\dfrac{h_{T}}{2}\sum_{S\in\mathscr{S}_T^I}\left\|\jump{\mu^{-1}\cu\by^{*}_h\times\boldsymbol{n}}\right\|_{S}^{2},
\end{align*}
respectively. The error estimator associated to the finite element discretization of the state equation is defined by $\mathcal{E}_{st,\mathscr{T}_h}^{2} := \sum_{T\in\mathscr{T}_h}\mathcal{E}_{st,T}^2$.
An application of Theorem \ref{thm:global_reli_weak} with $\mathbf{f}=\boldsymbol{f}$ and $\mathfrak{u}=\mathbf{u}^{*}_{h}$ immediately yields the a posteriori error estimate
\begin{equation}\label{eq:estimate_state_hat_discrete_st}
\|\by_{\mathbf{u}^{*}_{h}} - \by^{*}_{h}\|_{\mathbf{H}(\cu,\Omega)}
\lesssim 
\mathcal{E}_{st,\T_{h}}.
\end{equation}

Let us introduce the term $\mathsf{p}\in\mathbf{H}_{0}(\cu,\Omega)$ as the unique solution to
\begin{align}\label{eq:aux_p}
&(\mu^{-1}\cu \mathsf{p}, \cu \bw)_{\Omega} - \omega^2((\bsiep\cdot \mathbf{u}^{*}_{h})\mathsf{p},\bw)_{\Omega} \\ 
& \qquad \qquad = (\overline{\by^{*}_{h} - \by_{\Omega}},\bw)_{\Omega} + (\overline{\cu \by^{*}_{h} - \bE_{\Omega}}, \cu \bw)_{\Omega} \quad \forall \bw \in \mathbf{H}_{0}(\cu,\Omega).\nonumber
\end{align}
Define now, for $T\in\T_{h}$, the local error indicator associated to the discrete adjoint equation: $\mathcal{E}_{adj,T}^2:=\mathsf{E}_{T,1}^2+\mathsf{E}_{T,2}^2$, where $\mathsf{E}_{T,1}$ and $\mathsf{E}_{T,2}$ are defined by
\begin{align*}
\mathsf{E}_{T,1}^2 
:= \, & h_{T}^2\|\di(\overline{\by^{*}_{h} - \by_{\Omega}} + \omega^2(\bsiep\cdot\mathbf{u}^{*}_{h})\bp^{*}_{h})\|_{T}^2 \\ 
& ~ + \dfrac{h_{T}}{2} \sum_{S\in\mathscr{S}_T^I}\!\!\left\|\jump{(\overline{\by^{*}_{h} - \by_{\Omega}}+\omega^{2}(\bsiep\cdot\mathbf{u}^{*}_{h})\bp^{*}_{h})\cdot\boldsymbol{n}}\right\|_{S}^{2}, \\
\mathsf{E}_{T,2}^2 
 := \, &
  h_{T}^2\left\|\overline{\by^{*}_{h}\! - \!\by_{\Omega}} \! + \mathbf{curl}(\overline{\cu \by^{*}_{h}-\bE_{\Omega}})-\cu(\mu^{-1}\!\cu\bp^{*}_h)+\omega^2(\bsiep\cdot\mathbf{u}^{*}_{h})\bp^{*}_{h}\right\|_{T}^2 \\
& ~+\dfrac{h_{T}}{2}\sum_{S\in\mathscr{S}_T^I}\left\|\jump{(\overline{\cu\by^{*}_{h}-\bE_{\Omega}}-\mu^{-1}\cu\bp^{*}_h)\times\boldsymbol{n}}\right\|_{L^2(S)}^{2},
\end{align*}
respectively. The global error estimator associated to the finite element discretization of the state equation is thus defined by $\mathcal{E}_{adj,\mathscr{T}_h}^{2} := \sum_{T\in\mathscr{T}_h}\mathcal{E}_{adj,T}^2$. 

The next result establishes reliability properties for the discrete adjoint equation. 
\begin{lemma}[upper bound]\label{lemma:estimate_state_hat_discrete_adj}
Let $\mathsf{p}\in \mathbf{H}_{0}(\cu,\Omega)$ and $\bp^{*}_{h}\in\mathbf{V}(\T_{h})$ be the unique solutions to \eqref{eq:aux_p} and \eqref{eq:discrete_adjoint_equation}, respectively.\! If, for all $T\in\T_{h}$, $\by_{\Omega}|_{T},\mathbf{E}_{\Omega}|_{T}\in \mathbf{H}^{1}(T;\mathbb{C})$, then
\begin{equation}\label{eq:estimate_state_hat_discrete_adj}
\|\mathsf{p} - \bp^{*}_{h}\|_{\mathbf{H}(\cu,\Omega)}
\lesssim \mathcal{E}_{adj,\T_{h}}. 
\end{equation}
The hidden constant is independent of $\mathsf{p} $, $\bp^{*}_{h}$, the size of the elements in $\T_{h}$, and $\#\T_{h}$.
\end{lemma}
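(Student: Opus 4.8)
The plan is to mirror the reliability argument of Theorem~\ref{thm:global_reli_weak}, now applied to the adjoint pair, since \eqref{eq:aux_p} and \eqref{eq:discrete_adjoint_equation} share the same sesquilinear form---call it $a(\cdot,\cdot):=(\mu^{-1}\cu\,\cdot,\cu\,\cdot)_{\Omega}-\omega^{2}((\bsiep\cdot\mathbf{u}^{*}_{h})\,\cdot,\cdot)_{\Omega}$---and \eqref{eq:discrete_adjoint_equation} is precisely the Galerkin discretization of \eqref{eq:aux_p} over $\mathbf{V}(\T_{h})$ (both have the identical right-hand side $(\overline{\by^{*}_{h}-\by_{\Omega}},\cdot)_{\Omega}+(\overline{\cu\by^{*}_{h}-\bE_{\Omega}},\cu\,\cdot)_{\Omega}$). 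Write $\mathbf{e}_{\mathsf p}:=\mathsf p-\bp^{*}_{h}$. For arbitrary $\bw\in\mathbf{H}_{0}(\cu,\Omega)$ I would subtract \eqref{eq:discrete_adjoint_equation} tested with $\Pi_{h}\bw\in\mathbf{V}(\T_{h})$ from \eqref{eq:aux_p} tested with $\Pi_h\bw$; this gives the Galerkin orthogonality $a(\mathbf{e}_{\mathsf p},\Pi_{h}\bw)=0$, so that $a(\mathbf{e}_{\mathsf p},\bw)=a(\mathbf{e}_{\mathsf p},\bw-\Pi_{h}\bw)$, and the residual is reduced to the interpolation remainder, which I split via the Sch\"oberl decomposition $\bw-\Pi_{h}\bw=\nabla\varphi+\boldsymbol\Psi$ with $\varphi\in\mathrm{H}_{0}^{1}(\Omega)$ and $\boldsymbol\Psi\in\mathbf{H}_{0}^{1}(\Omega)$.

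The main work is an elementwise integration by parts, exactly as in \eqref{eq:identity_apost}. Since $\cu(\nabla\varphi)=\mathbf{0}$, only $\boldsymbol\Psi$ pairs with the curl-type datum $\overline{\cu\by^{*}_{h}-\bE_{\Omega}}-\mu^{-1}\cu\bp^{*}_{h}$; integrating this term by parts produces a volume contribution $\cu(\overline{\cu\by^{*}_{h}-\bE_{\Omega}})-\cu(\mu^{-1}\cu\bp^{*}_{h})$ together with the tangential jump $\jump{(\overline{\cu\by^{*}_{h}-\bE_{\Omega}}-\mu^{-1}\cu\bp^{*}_{h})\times\bn}$. The remaining datum $\overline{\by^{*}_{h}-\by_{\Omega}}+\omega^{2}(\bsiep\cdot\mathbf{u}^{*}_{h})\bp^{*}_{h}$ pairs with both $\nabla\varphi$ and $\boldsymbol\Psi$: its $\varphi$-contribution integrates by parts into the divergence residual $\di(\overline{\by^{*}_{h}-\by_{\Omega}}+\omega^{2}(\bsiep\cdot\mathbf{u}^{*}_{h})\bp^{*}_{h})$ and the normal jump defining $\mathsf E_{T,1}$, while its $\boldsymbol\Psi$-contribution appears directly and combines with the curl volume term above to form exactly the $\mathsf E_{T,2}$ volume residual. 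This reproduces, for the adjoint equation, an identity of the form \eqref{eq:identity_apost}.

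To close the estimate I would invoke the coercivity bound \eqref{eq:coercivity_a}, which holds verbatim for $a(\cdot,\cdot)$ with $\mathfrak u=\mathbf{u}^{*}_{h}$, choosing $\bw=\mathbf{e}_{\mathsf p}$; then apply Cauchy--Schwarz to each volume and face term, use the local stability estimates \eqref{eq:property_op_decom} of $\Pi_{h}$ to bound the factors $h_{T}^{-1}\|\varphi\|_{T}$, $\|\nabla\varphi\|_{T}$ by $\|\mathbf{e}_{\mathsf p}\|_{\mathcal{M}_{T}}$ and $h_{T}^{-1}\|\boldsymbol\Psi\|_{T}$, $\|\nabla\boldsymbol\Psi\|_{T}$ by $\|\cu\,\mathbf{e}_{\mathsf p}\|_{\mathcal{M}_{T}}$, and finally sum using the finite overlap of the patches $\mathcal{M}_{T}$. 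This yields $\|\mathbf{e}_{\mathsf p}\|_{\mathbf{H}(\cu,\Omega)}^{2}\lesssim\mathcal{E}_{adj,\T_{h}}\|\mathbf{e}_{\mathsf p}\|_{\mathbf{H}(\cu,\Omega)}$, and dividing gives \eqref{eq:estimate_state_hat_discrete_adj}.

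The one genuinely new point relative to Theorem~\ref{thm:global_reli_weak} is the curl-type datum $\overline{\cu\by^{*}_{h}-\bE_{\Omega}}$ on the right-hand side: the elementwise integration by parts and the $\rL^{2}$-membership of the resulting residuals require $\cu\bE_{\Omega}$ and $\di\by_{\Omega}$ to be well defined per element, which is exactly guaranteed by the hypothesis $\by_{\Omega}|_{T},\bE_{\Omega}|_{T}\in\mathbf{H}^{1}(T;\mathbb{C})$ (note that $\cu\by^{*}_{h}$ is elementwise constant, so $\cu(\cu\by^{*}_{h})=\mathbf{0}$ and the only surviving curl in the residual comes from $\bE_{\Omega}$). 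I expect this bookkeeping of the extra data term, rather than any conceptual difficulty, to be the main thing to handle carefully.
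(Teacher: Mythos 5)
Your proposal is correct and follows essentially the same route as the paper's proof: Galerkin orthogonality (valid because \eqref{eq:aux_p} and \eqref{eq:discrete_adjoint_equation} share the same sesquilinear form with $\mathbf{u}^{*}_{h}$ and the same right-hand side), the Sch\"oberl decomposition $\bw-\Pi_{h}\bw=\nabla\varphi+\boldsymbol\Psi$, elementwise integration by parts producing exactly the volume residuals and jumps defining $\mathsf{E}_{T,1}$ and $\mathsf{E}_{T,2}$, and closure via the coercivity estimate analogous to \eqref{eq:coercivity_a} with $\bw=\mathbf{e}_{\mathsf{p}}$, the bounds \eqref{eq:property_op_decom}, and the finite overlap of the patches $\mathcal{M}_{T}$. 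Your additional bookkeeping of the curl-type datum, including the observation that $\cu\by^{*}_{h}$ is elementwise constant so the only surviving curl in the volume residual comes from $\bE_{\Omega}$, is consistent with the paper's definition of $\mathcal{E}_{adj,T}$ and the stated regularity hypothesis on $\by_{\Omega}$ and $\bE_{\Omega}$.
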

\begin{proof} The proof closely follows the arguments developed in the proof of Theorem~\ref{thm:global_reli_weak} (see also \cite[Lemma 3.2]{hoppe2015}). 

Define $\mathbf{e}_{\mathsf{p}}:=\mathsf{p} - \bp^{*}_{h}$. Galerkin orthogonality, the decomposition $\bw - \Pi_{h}\bw = \nabla \varphi + \boldsymbol\Psi$, with $\varphi\in \textrm{H}_0^1(\Omega)$ and $\boldsymbol\Psi\in \mathbf{H}_{0}^{1}(\Omega)$, and an elementwise integration by parts formula allow us to obtain
\begin{multline*}
(\mu^{-1} \cu \mathbf{e}_{\mathsf{p}}, \cu \bw)_{\Omega}  -\omega^{2}((\bsiep\cdot\mathbf{u}^{*}_{h})\mathbf{e}_{\mathsf{p}},\bw)_{\Omega} = 
\sum_{T\in\T_{h}}\!\!(\overline{\by^{*}_{h}\! - \!\by_{\Omega}} \! + \mathbf{curl}(\overline{\cu \by^{*}_{h}  - \bE_{\Omega}}) \\
 - \cu(\mu^{-1}\!\cu\bp^{*}_h) + \omega^{2}(\bsiep\cdot\mathbf{u}^{*}_{h})\bp^{*}_{h},\boldsymbol\Psi)_{T} +  \sum_{S\in\mathcal{S}}(\llbracket (\overline{\cu\by^{*}_{h}-\bE_{\Omega}}
-\mu^{-1}\cu\bp^{*}_h)\times\boldsymbol{n}\rrbracket,\boldsymbol\Psi)_{S}\\ 
-\sum_{T\in \T_{h}} (\di(\overline{\by^{*}_{h} - \by_{\Omega}} +\omega^{2}(\bsiep\cdot\mathbf{u}^{*}_{h})\bp^{*}_{h}),\varphi)_{T}
+ \sum_{S\in\mathcal{S}}(\jump{(\overline{\by^{*}_{h} - \by_{\Omega}} +\omega^{2}(\bsiep\cdot\mathbf{u}^{*}_{h})\bp^{*}_{h})\cdot\boldsymbol{n}}, \varphi)_{S}.
\end{multline*}
Hence, using $\bw = \mathbf{e}_{\mathsf{p}}$, an analogous estimate of \eqref{eq:coercivity_a} for $\mathbf{e}_{\mathsf{p}}$, basic inequalities, the estimates in \eqref{eq:property_op_decom}, and the finite number of overlapping patches, we arrive at $\|\mathbf{e}_{\mathsf{p}}\|_{\mathbf{H}(\cu,\Omega)}^{2}
\lesssim \mathcal{E}_{adj,\T_{h}}\|\mathbf{e}_{\mathsf{p}}\|_{\mathbf{H}(\cu,\Omega)}$, which concludes the proof.
\end{proof} 

After having defined error estimators associated to the discretization of the state and adjoint equations, we define an a posteriori error estimator for the discrete optimal control problem which can be decomposed as the sum of two contributions:
\begin{align}\label{def:error_estimator_ocp}
\mathcal{E}_{ocp,\T_{h}}^2:=\mathcal{E}_{st,\T_{h}}^2 + \mathcal{E}_{adj,\T_{h}}^2.
\end{align}

We now state and prove the main result of this section. 

\begin{theorem}[global reliability]\label{thm:global_rel}
Let $\mathbf{u}^{*}\in U_{ad}$ be such that it satisfies the second-order optimality condition \eqref{eq:second_order_equivalent}. Let $\mathbf{u}^{*}_{h}$ be a local minimum of the discrete optimal control problem with $\by^{*}_{h}$ and $\bp^{*}_{h}$ being the corresponding state and adjoint state, respectively. If, for all $T\in\T_{h}$, $\bbf|_{T},\by_{\Omega}|_{T},\mathbf{E}_{\Omega}|_{T}\in \mathbf{H}^{1}(T;\mathbb{C})$ and assumption \eqref{eq:assumption_mesh} holds, then
\begin{align*}\label{eq:global_rel}
\|\bp^{*} - \bp^{*}_{h}\|_{\mathbf{H}(\cu,\Omega)} + \|\by^{*} - \by^{*}_{h}\|_{\mathbf{H}(\cu,\Omega)} + \|\mathbf{u}^{*} - \mathbf{u}^{*}_{h}\|_{\mathbb{R}^{\ell}}
\lesssim 
\mathcal{E}_{ocp,\T_{h}},
\end{align*}
with a hidden constant that is independent of continuous and discrete optimal variables, the size of the elements in $\T_{h}$, and $\#\T_{h}$.
\end{theorem}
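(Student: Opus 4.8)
The plan is to bound the three error contributions separately, starting from the control error and then feeding it into the state and adjoint bounds. Throughout I will route everything through the two auxiliary continuous variables already introduced: $\by_{\mathbf{u}^{*}_{h}}$, the continuous state associated with the discrete optimal control $\mathbf{u}^{*}_{h}$, and $\mathsf{p}$ from \eqref{eq:aux_p}, the continuous adjoint associated with $\mathbf{u}^{*}_{h}$ and the discrete state $\by^{*}_{h}$, since these are precisely the quantities controlled by $\mathcal{E}_{st,\T_{h}}$ and $\mathcal{E}_{adj,\T_{h}}$ through \eqref{eq:estimate_state_hat_discrete_st} and \eqref{eq:estimate_state_hat_discrete_adj}.

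First I would estimate $\|\mathbf{u}^{*} - \mathbf{u}^{*}_{h}\|_{\mathbb{R}^{\ell}}$. Since assumption \eqref{eq:assumption_mesh} is in force, the auxiliary estimate \eqref{eq:aux_estimate_apost} applies and gives $\tfrac{\nu}{2}\|\mathbf{u}^{*} - \mathbf{u}^{*}_{h}\|_{\mathbb{R}^{\ell}}^{2} \leq [j'(\mathbf{u}^{*}_{h}) - j'(\mathbf{u}^{*})](\mathbf{u}^{*}_{h} - \mathbf{u}^{*})$. Using the continuous variational inequality \eqref{eq:variational_inequality} with $\mathbf{u} = \mathbf{u}^{*}_{h}$ (so that $-j'(\mathbf{u}^{*})(\mathbf{u}^{*}_{h} - \mathbf{u}^{*})\le 0$ may be discarded) together with the discrete variational inequality \eqref{eq:discrete_var_ineq} with $\mathbf{u} = \mathbf{u}^{*}$ (so that $-j_{h}'(\mathbf{u}^{*}_{h})(\mathbf{u}^{*}_{h} - \mathbf{u}^{*})\ge 0$ may be added), I bound the right-hand side by $[j'(\mathbf{u}^{*}_{h}) - j_{h}'(\mathbf{u}^{*}_{h})](\mathbf{u}^{*}_{h} - \mathbf{u}^{*})$. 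This difference is exactly the quantity computed in the proof of Theorem \ref{thm:a_priori_estimate_ocp}, namely $\omega^{2}\sum_{k}\mathfrak{Re}\{\int_{\Omega_{k}}\bsiep(\by_{\mathbf{u}^{*}_{h}}\cdot\bp_{\mathbf{u}^{*}_{h}} - \by^{*}_{h}\cdot\bp^{*}_{h})\}(\mathbf{u}^{*}_{h} - \mathbf{u}^{*})_{k}$, and proceeding as in Step 1 of Lemma \ref{lemma:aux_estimate} it is controlled by $(\|\by_{\mathbf{u}^{*}_{h}} - \by^{*}_{h}\|_{\Omega} + \|\bp_{\mathbf{u}^{*}_{h}} - \bp^{*}_{h}\|_{\Omega})\|\mathbf{u}^{*}_{h} - \mathbf{u}^{*}\|_{\mathbb{R}^{\ell}}$ after the stability bounds for $\by^{*}_{h}$ and $\bp_{\mathbf{u}^{*}_{h}}$. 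It then remains to replace $\bp_{\mathbf{u}^{*}_{h}}$ (continuous adjoint built on the continuous state) by $\mathsf{p}$ (continuous adjoint built on the discrete state): the difference $\bp_{\mathbf{u}^{*}_{h}} - \mathsf{p}$ solves an adjoint problem whose data is $\by_{\mathbf{u}^{*}_{h}} - \by^{*}_{h}$, so by stability $\|\bp_{\mathbf{u}^{*}_{h}} - \mathsf{p}\|_{\Omega} \lesssim \|\by_{\mathbf{u}^{*}_{h}} - \by^{*}_{h}\|_{\mathbf{H}(\cu,\Omega)} \lesssim \mathcal{E}_{st,\T_{h}}$ by \eqref{eq:estimate_state_hat_discrete_st}, while $\|\mathsf{p} - \bp^{*}_{h}\|_{\Omega} \lesssim \mathcal{E}_{adj,\T_{h}}$ by \eqref{eq:estimate_state_hat_discrete_adj}. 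Dividing out one power of $\|\mathbf{u}^{*}_{h} - \mathbf{u}^{*}\|_{\mathbb{R}^{\ell}}$ yields $\|\mathbf{u}^{*} - \mathbf{u}^{*}_{h}\|_{\mathbb{R}^{\ell}} \lesssim \mathcal{E}_{st,\T_{h}} + \mathcal{E}_{adj,\T_{h}} \lesssim \mathcal{E}_{ocp,\T_{h}}$.

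With the control error in hand, the state and adjoint errors follow by triangle inequalities through the same auxiliary variables. For the state, I split $\|\by^{*} - \by^{*}_{h}\|_{\mathbf{H}(\cu,\Omega)} \leq \|\by^{*} - \by_{\mathbf{u}^{*}_{h}}\|_{\mathbf{H}(\cu,\Omega)} + \|\by_{\mathbf{u}^{*}_{h}} - \by^{*}_{h}\|_{\mathbf{H}(\cu,\Omega)}$; the first term is a difference of two continuous states with controls $\mathbf{u}^{*}$ and $\mathbf{u}^{*}_{h}$, bounded by $\|\mathbf{u}^{*} - \mathbf{u}^{*}_{h}\|_{\mathbb{R}^{\ell}}$ via the stability argument behind \eqref{eq:estimate_y_1_y_2}, while the second is bounded by $\mathcal{E}_{st,\T_{h}}$ via \eqref{eq:estimate_state_hat_discrete_st}. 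For the adjoint, I split $\|\bp^{*} - \bp^{*}_{h}\|_{\mathbf{H}(\cu,\Omega)} \leq \|\bp^{*} - \mathsf{p}\|_{\mathbf{H}(\cu,\Omega)} + \|\mathsf{p} - \bp^{*}_{h}\|_{\mathbf{H}(\cu,\Omega)}$; the difference $\bp^{*} - \mathsf{p}$ solves an adjoint problem with data driven by $\by^{*} - \by^{*}_{h}$ and by the coefficient mismatch $\mathbf{u}^{*} - \mathbf{u}^{*}_{h}$, so stability gives $\|\bp^{*} - \mathsf{p}\|_{\mathbf{H}(\cu,\Omega)} \lesssim \|\by^{*} - \by^{*}_{h}\|_{\mathbf{H}(\cu,\Omega)} + \|\mathbf{u}^{*} - \mathbf{u}^{*}_{h}\|_{\mathbb{R}^{\ell}}$, and the remaining term is bounded by $\mathcal{E}_{adj,\T_{h}}$ through \eqref{eq:estimate_state_hat_discrete_adj}. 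Collecting the three bounds and using $\mathcal{E}_{st,\T_{h}}, \mathcal{E}_{adj,\T_{h}} \leq \mathcal{E}_{ocp,\T_{h}}$ concludes the proof.

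The main obstacle is the control estimate of the second paragraph, and specifically the bookkeeping of \emph{which} auxiliary continuous variables appear. The a priori argument naturally produces $\bp_{\mathbf{u}^{*}_{h}}$, the continuous adjoint built on the continuous state, whereas the a posteriori estimators are engineered to control $\mathsf{p}$, the continuous adjoint built on the discrete state; bridging the two requires the extra stability estimate for $\bp_{\mathbf{u}^{*}_{h}} - \mathsf{p}$ together with \eqref{eq:estimate_state_hat_discrete_st}. Once this correspondence is set up correctly, the remaining steps are routine triangle inequalities and stability bounds for the state and adjoint equations.
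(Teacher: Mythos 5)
Your proposal is correct and follows essentially the same route as the paper's proof: the same three-part structure (control, then state, then adjoint error), the same auxiliary variables $\by_{\mathbf{u}^{*}_{h}}$ and $\mathsf{p}$, and the same key chain of estimates \eqref{eq:aux_estimate_apost}, \eqref{eq:estimate_state_hat_discrete_st}, and \eqref{eq:estimate_state_hat_discrete_adj}, including the bridging of $\bp_{\mathbf{u}^{*}_{h}}$ and $\mathsf{p}$ via stability of the adjoint problem with data $\by_{\mathbf{u}^{*}_{h}} - \by^{*}_{h}$.
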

\begin{proof}
We proceed in three steps.

\underline{Step 1.} ($\|\mathbf{u}^{*} - \mathbf{u}^{*}_{h}\|_{\mathbb{R}^{\ell}} \lesssim \mathcal{E}_{ocp,\T_{h}}$) 
Since we have assumed \eqref{eq:assumption_mesh}, we are in position to use estimate \eqref{eq:aux_estimate_apost}. The latter, the variational inequality \eqref{eq:variational_inequality} with $\mathbf{u} = \mathbf{u}^{*}_{h}$, and inequality  $-j_{h}^{\prime}(\mathbf{u}^{*}_{h})(\mathbf{u}^{*}_{h} - \mathbf{u}^{*} ) \geq  0$ yield the bound
\begin{align*}
\|\mathbf{u}^{*} - \mathbf{u}^{*}_{h}\|_{\mathbb{R}^{\ell}}^{2} \lesssim 
[j'(\mathbf{u}^{*}_{h}) - j'(\mathbf{u}^{*})](\mathbf{u}^{*}_{h} - \mathbf{u}^{*}) 
\leq  
[j'(\mathbf{u}^{*}_{h}) - j_{h}'(\mathbf{u}^{*}_{h})](\mathbf{u}^{*}_{h} - \mathbf{u}^{*}).
\end{align*}
Using the arguments that lead to \eqref{eq:estimate_uh-u_mu} in the proof of Theorem \ref{thm:a_priori_estimate_ocp}, we obtain
\begin{align*}
\|\mathbf{u}^{*} - \mathbf{u}^{*}_{h}\|_{\mathbb{R}^{\ell}}
\lesssim 
\|\by^{*}_{h} - \by_{\mathbf{u}^{*}_{h}}\|_{\Omega} + \|\bp^{*}_{h}  - \bp_{\mathbf{u}^{*}_{h}}\|_{\Omega},
\end{align*}
where $\by_{\mathbf{u}^{*}_{h}}\in \mathbf{H}_0(\cu,\Omega)$ corresponds to the unique solution to problem \eqref{eq:weak_st_eq} with $\mathbf{u}=\mathbf{u}^{*}_{h}$, and $\bp_{\mathbf{u}^{*}_{h}}\in \mathbf{H}_0(\cu,\Omega)$ is the unique solution to problem \eqref{eq:adj_eq} with $\mathbf{u}=\mathbf{u}^{*}_{h}$ and $\by=\by_{\mathbf{u}^{*}_{h}}$. Invoke the a posteriori error estimate \eqref{eq:estimate_state_hat_discrete_st} to conclude that 
\begin{equation}\label{eq:estimate_u_uh_apost_I}
\|\mathbf{u}^{*} - \mathbf{u}^{*}_{h}\|_{\mathbb{R}^{\ell}} \lesssim \mathcal{E}_{st,\T_{h}} + \|\bp^{*}_{h}  - \bp_{\mathbf{u}^{*}_{h}}\|_{\Omega}.
\end{equation}
To estimate $\|\bp^{*}_{h}  - \bp_{\mathbf{u}^{*}_{h}}\|_{\Omega}$ we invoke the term $\mathsf{p}\in\mathbf{H}_{0}(\cu,\Omega)$, solution to \eqref{eq:aux_p}, and the a posteriori error estimate \eqref{eq:estimate_state_hat_discrete_adj} to arrive at
\begin{align}\label{eq:estimate_u_uh_apost_II}
\|\bp^{*}_{h}  - \bp_{\mathbf{u}^{*}_{h}}\|_{\Omega}
\leq
\|\bp^{*}_{h}  - \mathsf{p}\|_{\Omega} + \|\mathsf{p}  - \bp_{\mathbf{u}^{*}_{h}}\|_{\Omega}
\lesssim
\mathcal{E}_{adj,\T_{h}} + \|\mathsf{p}  - \bp_{\mathbf{u}^{*}_{h}}\|_{\mathbf{H}(\cu,\Omega)}.
\end{align}
Finally, we note that the term $\mathsf{p}  - \bp_{\mathbf{u}^{*}_{h}}\in \mathbf{H}_0(\cu,\Omega)$ solves 
\begin{align*}
&(\mu^{-1}\cu (\mathsf{p}  - \bp_{\mathbf{u}^{*}_{h}}), \cu \bw)_{\Omega} - \omega^{2}((\bsiep\cdot \mathbf{u}^{*}_{h})(\mathsf{p}  - \bp_{\mathbf{u}^{*}_{h}}),\bw)_{\Omega} \\ 
& \qquad \qquad = (\overline{\by^{*}_{h} - \by_{\mathbf{u}^{*}_{h}}},\bw)_{\Omega} + (\overline{\cu (\by^{*}_{h} - \by_{\mathbf{u}^{*}_{h}}}), \cu \bw)_{\Omega} \quad \forall \bw \in \mathbf{H}_{0}(\cu,\Omega).
\end{align*}
The stability of this problem gives us $\|\mathsf{p}  - \bp_{\mathbf{u}^{*}_{h}}\|_{\mathbf{H}(\cu,\Omega)} \lesssim \|\by^{*}_{h} - \by_{\mathbf{u}^{*}_{h}}\|_{\mathbf{H}(\cu,\Omega)}\lesssim \mathcal{E}_{st,\T_{h}}$, where, to obtain the last inequality, we have used the error estimate \eqref{eq:estimate_state_hat_discrete_st}. Therefore, using  $\|\mathsf{p}  - \bp_{\mathbf{u}^{*}_{h}}\|_{\mathbf{H}(\cu,\Omega)} \lesssim \mathcal{E}_{st,\T_{h}}$ in  \eqref{eq:estimate_u_uh_apost_II} and the obtained estimate in \eqref{eq:estimate_u_uh_apost_I}, we conclude that: 
\begin{align}\label{eq:estimate_u-uh_apost_final}
\|\mathbf{u}^{*} - \mathbf{u}^{*}_{h}\|_{\mathbb{R}^{\ell}} \lesssim \mathcal{E}_{ocp,\T_{h}}.
\end{align}

\underline{Step 2.} ($\|\by^{*} - \by^{*}_{h}\|_{\mathbf{H}(\cu,\Omega)} \lesssim \mathcal{E}_{ocp,\T_{h}}$) Invoke the variable $\by_{\mathbf{u}^{*}_{h}}\in \mathbf{H}_0(\cu,\Omega)$ and the triangle inequality to obtain
\begin{align}\label{eq:estimate_y_yh_apost}
\|\by^{*} - \by^{*}_{h}\|_{\mathbf{H}(\cu,\Omega)} 
\leq
\|\by_{\mathbf{u}^{*}_{h}} - \by^{*}_{h}\|_{\mathbf{H}(\cu,\Omega)} + \|\by^{*} - \by_{\mathbf{u}^{*}_{h}}\|_{\mathbf{H}(\cu,\Omega)}. 
\end{align}
The first term in the right-hand side of \eqref{eq:estimate_y_yh_apost} can be bounded in view of \eqref{eq:estimate_state_hat_discrete_st}, whereas the second term can be bounded in view of the stability estimate $\|\by^{*} - \by_{\mathbf{u}^{*}_{h}}\|_{\mathbf{H}(\cu,\Omega)} \lesssim \|\mathbf{u}^{*} - \mathbf{u}^{*}_{h}\|_{\mathbb{R}^{\ell}}$. These bounds, in combination with \eqref{eq:estimate_u-uh_apost_final}, yield 
\begin{align}\label{eq:estimate_y-yh_apost_final}
\|\by^{*} - \by^{*}_{h}\|_{\mathbf{H}(\cu,\Omega)} \lesssim \mathcal{E}_{ocp,\T_{h}}.
\end{align}

\underline{Step 3.} ($\|\bp^{*} - \bp^{*}_{h}\|_{\mathbf{H}(\cu,\Omega)} \lesssim \mathcal{E}_{ocp,\T_{h}}$) Similarly to the previous step, we use the variable $\mathsf{p}\in \mathbf{H}_0(\cu,\Omega)$, solution to \eqref{eq:aux_p}, and the triangle inequality to arrive at
\begin{equation}\label{eq:estimate_p_ph_apost}
\|\bp^{*} - \bp^{*}_{h}\|_{\mathbf{H}(\cu,\Omega)} 
\leq
\|\bp^{*} - \mathsf{p}\|_{\mathbf{H}(\cu,\Omega)} + \|\mathsf{p} - \bp^{*}_{h}\|_{\mathbf{H}(\cu,\Omega)}. 
\end{equation}
The term $\|\bp^{*} - \mathsf{p}\|_{\mathbf{H}(\cu,\Omega)}$ is controlled in view of \eqref{eq:estimate_state_hat_discrete_adj}. To bound the remaining term in \eqref{eq:estimate_p_ph_apost}, we use the stability estimate $\|\bp^{*} - \mathsf{p}\|_{\mathbf{H}(\cu,\Omega)} \lesssim \|\by^{*} - \by^{*}_{h}\|_{\mathbf{H}(\cu,\Omega)} + \|\mathbf{u}^{*} - \mathbf{u}^{*}_{h}\|_{\mathbb{R}^{\ell}}$. Hence, we have $\|\bp^{*} - \bp^{*}_{h}\|_{\mathbf{H}(\cu,\Omega)} \lesssim  \|\by^{*} - \by^{*}_{h}\|_{\mathbf{H}(\cu,\Omega)} + \|\mathbf{u}^{*} - \mathbf{u}^{*}_{h}\|_{\mathbb{R}^{\ell}} + \mathcal{E}_{adj,\T_{h}}$. We conclude the proof in view of estimates \eqref{eq:estimate_u-uh_apost_final} and \eqref{eq:estimate_y-yh_apost_final}.
\end{proof}


\subsubsection{Efficiency analysis}\label{eq:sec:eff}

In the forthcoming analysis we derive an upper bound for the a posteriori error estimator $\mathcal{E}_{ocp,\mathscr{T}_{h}}$. To simplify the exposition, in this section we assume that $\mu^{-1}$ and $\bsiep$ are piecewise polynomial on the partition $\mathcal{P}$; see section \ref{sec:partition_fields}. The analysis will be based on standard bubble function arguments. In particular, it requires the introduction of bubble functions for tetrahedra and their corresponding faces (see \cite{MR1885308,MR3059294}). 

\begin{lemma}[bubble function properties]\label{lemma:burbuja}
Let $j\geq 0$. For any $T\in\T_{h}$  and $ S\in \mathscr{S}_T^I$, let $b_{T}$ and $b_{ S}$  be the corresponding interior quadratic and cubic edge bubble function, respectively. Then, for all $q\in \mathbb{P}_j(T)$ and $p\in \mathbb{P}_j(S)$, there hold 
\begin{equation*}
\|q\|_{T}^2 \lesssim \|b_{T}^{1/2} q\|_{T}^2\leq \|q\|_{T}^2, \qquad
\|b_Sp\|_{S}^2\leq \|p\|^2_{S} \lesssim \|b_S^{1/2}p\|_{S}^2.
\end{equation*}
Moreover, for all $p \in \mathbb{P}_j( S)$, there exists an extension of $p \in  \mathbb{P}_j(T)$,   which we denote simply as $p$, such
that the following estimates hold
\[
 h_T\|p\|_{S}^2\lesssim \|b^{1/2}_{S} p\|_{T}^2 \lesssim h_T\|p\|_{S}^2 \qquad
\forall p\in \mathbb{P}_j(S).
\]
\end{lemma}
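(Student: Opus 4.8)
The plan is to reduce every inequality to a statement on a fixed reference simplex $\hat{T}$, where all the quantities involved become norms on the finite-dimensional space $\mathbb{P}_j(\hat{T})$ (or $\mathbb{P}_j(\hat{S})$), and then to transfer the resulting estimates back to a generic element $T\in\T_h$ by an affine change of variables, keeping careful track of the powers of $h_T$ produced by the Jacobian. Throughout, I write $F_T:\hat{T}\to T$ for the affine map, with $|\det DF_T|\sim h_T^3$, $\|DF_T\|\sim h_T$, and $\|DF_T^{-1}\|\sim h_T^{-1}$, which are available because the family $\mathbb{T}$ is shape regular.

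First I would treat the element-bubble inequalities $\|q\|_T^2\lesssim\|b_T^{1/2}q\|_T^2\leq\|q\|_T^2$. The upper bound is immediate from $0\leq b_T\leq 1$. For the lower bound, note that both $\hat{q}\mapsto\|\hat{q}\|_{\hat{T}}$ and $\hat{q}\mapsto\|b_{\hat{T}}^{1/2}\hat{q}\|_{\hat{T}}$ are norms on the finite-dimensional space $\mathbb{P}_j(\hat{T})$ (the second is a norm precisely because $b_{\hat{T}}>0$ in the interior of $\hat{T}$), hence equivalent with constants depending only on $j$ and $\hat{T}$. Pulling back $q$ via $F_T$, the Jacobian factor $|\det DF_T|$ appears identically on both sides of the norm equivalence, so the dimensionless constant is preserved and the claimed estimate on $T$ follows. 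The two face-bubble inequalities $\|b_Sp\|_S^2\leq\|p\|_S^2\lesssim\|b_S^{1/2}p\|_S^2$ are obtained in the same fashion: the first bound from $0\leq b_S\leq 1$, and the second by comparing the norms $\|\cdot\|_{\hat{S}}$ and $\|b_{\hat{S}}^{1/2}\cdot\|_{\hat{S}}$ on $\mathbb{P}_j(\hat{S})$ and scaling by the face Jacobian. Since this comparison is performed entirely on the face, no extra power of $h_T$ appears.

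The remaining estimate, $h_T\|p\|_S^2\lesssim\|b_S^{1/2}p\|_T^2\lesssim h_T\|p\|_S^2$, is where the genuine work lies. Here I would first fix an extension operator on the reference configuration: given $\hat{p}\in\mathbb{P}_j(\hat{S})$, extend it to $\mathbb{P}_j(\hat{T})$ by making it constant in the coordinate transverse to $\hat{S}$ (equivalently, by composing with the orthogonal projection onto the hyperplane containing $\hat{S}$), so that the extension, still denoted $\hat{p}$, is a polynomial of the same degree. On $\hat{T}$ the two maps $\hat{p}\mapsto\|\hat{p}\|_{\hat{S}}$ and $\hat{p}\mapsto\|b_{\hat{S}}^{1/2}\hat{p}\|_{\hat{T}}$ are again norms on $\mathbb{P}_j(\hat{S})$ (the latter because $b_{\hat{S}}$ is strictly positive on the interior of $\hat{T}$ and the extension is injective), hence equivalent on the reference element. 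Transferring back through $F_T$ is the delicate point: the volume integral over $T$ carries the factor $|\det DF_T|\sim h_T^3$, whereas the surface integral over $S$ carries the face Jacobian $\sim h_T^2$; their quotient is exactly the single power of $h_T$ that appears in the statement. Collecting these scalings yields both directions of the claimed equivalence.

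I expect the main obstacle to be the bookkeeping in this last step: choosing the extension so that it is simultaneously degree-preserving and compatible with the reference-element norm equivalence, and then matching the volume and surface Jacobians so that precisely one factor of $h_T$ survives. Once the extension is fixed on $\hat{T}$ and the shape-regularity scalings $|\det DF_T|\sim h_T^3$ and $|S|\sim h_T^2$ are invoked, all four displayed inequalities follow from the finite-dimensional norm equivalence on the reference element.
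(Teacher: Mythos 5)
Your proposal is correct, and it is essentially the paper's own approach: the paper states this lemma without proof, citing the standard a posteriori error estimation references, where the argument is exactly the one you give — equivalence of norms on the finite-dimensional polynomial space over the reference element (using positivity of the bubbles in the interior and injectivity of a degree-preserving affine-projection extension), transferred to $T$ by affine scaling so that the volume Jacobian $\sim h_T^3$ against the face Jacobian $\sim h_T^2$ produces the single factor $h_T$ in the trace-type estimate. No gaps; nothing further is needed.
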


As a final ingredient, given $T\in \T_{h}$ and $\bv\in \mathbf{L}^{2}(\Omega;\mathbb{C})$ such that $\bv|_{T} \in \mathbf{H}^{1}(T;\mathbb{C})$, we introduce the term
\begin{align*}
\mathrm{osc}(\bv;T) :=& \,\sum_{T'\in \mathcal{N}_T}\left(h_{T'}\|\bv- \boldsymbol\pi_{T}\bv\|_{T'} + h_{T'}\|\textnormal{div}\,\bv- \pi_{T}\textnormal{div}\,\bv\|_{T'}\right) \\
& ~ +  \sum_{S'\in\mathscr{S}_{T}^{I}}h_{T}^{\frac{1}{2}}\|\jump{(\bv - \boldsymbol\pi_{T}\bv)\cdot\boldsymbol{n}}\|_{S'},
\end{align*}
where $\pi_{T}$ denotes the $\rL^{2}(T)$--orthogonal projection operator onto $\mathbb{P}_{0}(T)$, $\boldsymbol{\pi}_{T}$ denotes the $\mathbf{L}^{2}(T)$--orthogonal projection operator onto $[\mathbb{P}_{0}(T)]^{3}$, and $\mathcal{N}_{T}$ is defined in \eqref{def:patch}.

\begin{theorem}[local efficiency of $\mathcal{E}_{st,T}$]\label{thm:eff_st}
Let $\mathbf{u}^{*} \in U_{ad}$ be a local solution to \eqref{eq:weak_min_problem}--\eqref{eq:weak_st_eq}. Let $\mathbf{u}^{*}_{h}$ be a local minimum of the discrete optimal control problem with $\by^{*}_{h}$ and $\bp^{*}_{h}$ being the corresponding state and adjoint state, respectively. Then, for $T\in\T_{h}$, the local error indicator $\mathcal{E}_{st,T}$ satisfies the  bound
\begin{align*}
\mathcal{E}_{st,T}
\lesssim
\|\mathbf{u}^{*} - \mathbf{u}^{*}_{h}\|_{\mathbb{R}^{l}} + \|\by^{*} - \by^{*}_{h}\|_{\mathbf{H}(\cu,\mathcal{N}_{T})} + \mathrm{osc}(\bbf;T),
\end{align*}
where $\mathcal{N}_{T}$ is defined in \eqref{def:patch}. The hidden constant is independent of  continuous and discrete optimal variables, the size of the elements in $\T_{h}$, and $\#\T_{h}$.
\end{theorem}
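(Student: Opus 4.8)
The plan is to establish the bound indicator-by-indicator, exploiting that the continuous optimal state $\by^{*}$ satisfies, in the strong sense on each $T$, both the element equation $\cu(\mu^{-1}\cu\by^{*}) - \omega^{2}(\bsiep\cdot\mathbf{u}^{*})\by^{*} = \bbf$ and, since $\bbf + \omega^{2}(\bsiep\cdot\mathbf{u}^{*})\by^{*} = \cu(\mu^{-1}\cu\by^{*})$ is a curl, the divergence-free identity $\di(\bbf + \omega^{2}(\bsiep\cdot\mathbf{u}^{*})\by^{*}) = 0$ together with the normal-trace continuity $\jump{(\bbf + \omega^{2}(\bsiep\cdot\mathbf{u}^{*})\by^{*})\cdot\bn} = 0$ across interior faces, and the tangential continuity $\jump{\mu^{-1}\cu\by^{*}\times\bn} = 0$. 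Subtracting these continuous identities from the discrete residuals defining $\mathcal{E}_{T,1}$ and $\mathcal{E}_{T,2}$, each residual becomes a functional of the state error $\by^{*} - \by^{*}_{h}$ and of the coupling mismatch $(\bsiep\cdot\mathbf{u}^{*}_{h})\by^{*}_{h} - (\bsiep\cdot\mathbf{u}^{*})\by^{*}$; the latter I would split as $(\bsiep\cdot\mathbf{u}^{*}_{h})(\by^{*}_{h} - \by^{*}) + (\bsiep\cdot(\mathbf{u}^{*}_{h} - \mathbf{u}^{*}))\by^{*}$, so that, using the $\rL^{\infty}$-bound on $\bsiep$ and $\|\by^{*}\|_{\Omega}\lesssim\|\bbf\|_{\Omega}$, it is controlled by $\|\by^{*} - \by^{*}_{h}\|_{\Omega} + \|\mathbf{u}^{*} - \mathbf{u}^{*}_{h}\|_{\mathbb{R}^{\ell}}$.

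First I would treat the volumetric curl residual in $\mathcal{E}_{T,2}$. Writing $\mathbf{R}_{T} := \bbf - \cu(\mu^{-1}\cu\by^{*}_{h}) + \omega^{2}(\bsiep\cdot\mathbf{u}^{*}_{h})\by^{*}_{h}$ and its polynomial surrogate $\mathbf{R}_{T,h}$ (replacing $\bbf$ by a piecewise polynomial $\bbf_{h}$, legitimate since $\mu^{-1},\bsiep$ are piecewise polynomial and $\by^{*}_{h}\in\boldsymbol{\mathcal{N}}_{0}$), I would use the interior bubble $b_{T}$ and the test function $\bw_{T} := b_{T}\mathbf{R}_{T,h}$ extended by zero. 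By Lemma \ref{lemma:burbuja}, $\|\mathbf{R}_{T,h}\|_{T}^{2}\lesssim(\mathbf{R}_{T,h},\bw_{T})_{T}$; integrating the curl term by parts annihilates the boundary contribution because $b_{T}$ vanishes on $\partial T$, and inserting the localized weak form \eqref{eq:weak_st_eq} turns $(\mathbf{R}_{T},\bw_{T})_{T}$ into $(\mu^{-1}\cu(\by^{*} - \by^{*}_{h}),\cu\bw_{T})_{T} + \omega^{2}((\bsiep\cdot\mathbf{u}^{*}_{h})\by^{*}_{h} - (\bsiep\cdot\mathbf{u}^{*})\by^{*},\bw_{T})_{T}$. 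Cauchy--Schwarz together with the inverse estimate $\|\cu\bw_{T}\|_{T}\lesssim h_{T}^{-1}\|\mathbf{R}_{T,h}\|_{T}$ then yields $h_{T}\|\mathbf{R}_{T,h}\|_{T}\lesssim \|\by^{*} - \by^{*}_{h}\|_{\mathbf{H}(\cu,T)} + \|\mathbf{u}^{*} - \mathbf{u}^{*}_{h}\|_{\mathbb{R}^{\ell}}$, after which the data term $h_{T}\|\bbf - \bbf_{h}\|_{T}$ is absorbed into $\mathrm{osc}(\bbf;T)$ by the triangle inequality. The essential point is that the factor $h_{T}$ in the indicator exactly cancels the $h_{T}^{-1}$ produced by differentiating the bubble, which is what renders the curl residual efficient.

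Next I would bound the tangential jump in $\mathcal{E}_{T,2}$. For each $S\in\mathscr{S}_{T}^{I}$, using the face bubble $b_{S}$ and a polynomial extension of the piecewise-polynomial jump $\jump{\mu^{-1}\cu\by^{*}_{h}\times\bn}$ as a test function supported in $\mathcal{N}_{S} = \{T^{+},T^{-}\}$, Lemma \ref{lemma:burbuja} gives $\|\jump{\mu^{-1}\cu\by^{*}_{h}\times\bn}\|_{S}^{2}\lesssim \langle\jump{\mu^{-1}\cu\by^{*}_{h}\times\bn},b_{S}(\cdot)\rangle_{S}$; elementwise integration by parts on $T^{\pm}$ expresses this boundary pairing through the already-controlled volumetric residual and the curl term $(\mu^{-1}\cu(\by^{*}-\by^{*}_{h}),\cu(\cdot))$, and the scaling $h_{T'}\|b_{S}(\cdot)\|_{T'}^{2}\lesssim h_{T'}\|\cdot\|_{S}^{2}$ from Lemma \ref{lemma:burbuja} produces $h_{T}^{1/2}\|\jump{\mu^{-1}\cu\by^{*}_{h}\times\bn}\|_{S}\lesssim \|\by^{*}-\by^{*}_{h}\|_{\mathbf{H}(\cu,\mathcal{N}_{S})} + \|\mathbf{u}^{*}-\mathbf{u}^{*}_{h}\|_{\mathbb{R}^{\ell}} + \mathrm{osc}(\bbf;T)$. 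The two indicators comprising $\mathcal{E}_{T,1}$ are handled identically but in the $\mathbf{H}(\di)$ setting: for the interior divergence residual I would test $\di(\bbf_{h} + \omega^{2}(\bsiep\cdot\mathbf{u}^{*}_{h})\by^{*}_{h})$ against $b_{T}$ times itself and integrate by parts, using $\di(\bbf + \omega^{2}(\bsiep\cdot\mathbf{u}^{*})\by^{*}) = 0$, to obtain $h_{T}\|\di(\,\cdot\,)\|_{T}\lesssim \|(\bsiep\cdot\mathbf{u}^{*}_{h})\by^{*}_{h} - (\bsiep\cdot\mathbf{u}^{*})\by^{*}\|_{T} + \mathrm{osc}(\bbf;T)$; the normal jump is then bounded through the face bubble exactly as the tangential jump, now invoking $\jump{(\bbf + \omega^{2}(\bsiep\cdot\mathbf{u}^{*})\by^{*})\cdot\bn} = 0$.

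Finally I would collect the four estimates, use $h_{T}\lesssim 1$ to subsume the lower-order volumetric $\by$-contributions into the full $\mathbf{H}(\cu)$ norm, and invoke shape regularity and the finite overlap of the face patches $\{\mathcal{N}_{S}\}_{S\subset\partial T}\subset\mathcal{N}_{T}$ to merge all local and neighbouring contributions into $\|\by^{*}-\by^{*}_{h}\|_{\mathbf{H}(\cu,\mathcal{N}_{T})}$, which yields the claimed bound. I expect the main obstacle to be the bookkeeping forced by the bilinear coupling: unlike for a linear state equation, no residual can be written as a single error term, so one must consistently split $(\bsiep\cdot\mathbf{u}^{*}_{h})\by^{*}_{h} - (\bsiep\cdot\mathbf{u}^{*})\by^{*}$ at each step and verify that the control component genuinely contributes only the $\|\mathbf{u}^{*}-\mathbf{u}^{*}_{h}\|_{\mathbb{R}^{\ell}}$ term (via the $\rL^{\infty}$-bound on $\bsiep$ and the stability $\|\by^{*}\|_{\Omega}\lesssim\|\bbf\|_{\Omega}$), while tracking the oscillation of the non-polynomial datum $\bbf$ correctly through both the volume and the normal-jump residuals.
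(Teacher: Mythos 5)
Your proposal is correct and follows essentially the same route as the paper's proof: a four-step bubble-function argument (interior bubbles for the two volumetric residuals, face bubbles for the two jumps), with a polynomial surrogate of $\bbf$ absorbed into $\mathrm{osc}(\bbf;T)$ and the bilinear mismatch $(\bsiep\cdot\mathbf{u}^{*}_{h})\by^{*}_{h}-(\bsiep\cdot\mathbf{u}^{*})\by^{*}$ split into a state-error part and a control part controlled via the $\rL^{\infty}$-bound on $\bsiep$ and stability. In particular, your treatment of the $\mathcal{E}_{T,1}$ indicators---testing the divergence against the bubble-weighted residual and integrating by parts using $\di(\bbf+\omega^{2}(\bsiep\cdot\mathbf{u}^{*})\by^{*})=0$---is the same computation as the paper's choice of gradient test functions $\nabla(b_{T}\tilde{\mathcal{R}}_{T,1})$ and $\nabla(b_{S}\tilde{\mathcal{J}}_{S,1})$ in the weak error identity.
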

\begin{proof} Let $T\in\T_{h}$ and $S\in \mathscr{S}_{T}^{I}$. We define the element and interelement residuals
\begin{align*}
\mathcal{R}_{T,1} &:= \di(\boldsymbol{f} +\omega^{2}(\bsiep\cdot\mathbf{u}^{*}_{h})\by^{*}_{h})|^{}_{T}, \quad \mathcal{J}_{S,1}:= \jump{(\bbf + \omega^{2}(\bsiep\cdot\mathbf{u}^{*}_{h})\by^{*}_{h})\cdot\boldsymbol{n}},\\
\mathcal{R}_{T,2} &:= (\bbf-\cu(\mu^{-1}\cu\by^{*}_h)+\omega^2(\bsiep\cdot\mathbf{u}^{*}_{h})\by^{*}_{h})|^{}_{T},\quad \mathcal{J}_{S,2}:= \jump{\mu^{-1}\cu\by^{*}_h\times\boldsymbol{n}}.
\end{align*}
We immediately note that $\mathcal{E}_{T,k}^{2}:=h_{T}^{2}\|\mathcal{R}_{T,k}\|_{T}^{2} + \tfrac{h_{T}}{2}\sum_{S\in\mathscr{S}_{T}^{I}}\|\mathcal{J}_{S,k}\|_{S}^{2}$ with $k\in\{1,2\}$, and $\mathcal{E}_{st,T}^{2}:=\mathcal{E}_{T,1}^{2} + \mathcal{E}_{T,2}^{2}$; cf. section \ref{sec:glob_rel_ana}. We now proceed on the basis of four steps and estimate each term in the definition of the local estimator $\mathcal{E}_{st,T}$ separately.

\underline{Step 1.} (estimation of $h_{T}\|\mathcal{R}_{T,2}\|_{T}$) Let $T\in \T_{h}$. We define the term $\tilde{\mathcal{R}}_{T,2}:= (\pi_{T}\bbf-\cu(\mu^{-1}\cu\by^{*}_h)+\omega^{2}(\bsiep\cdot\mathbf{u}^{*}_{h})\by^{*}_{h})|^{}_T$. The triangle inequality yields
\begin{align}\label{eq:triangle_RT1}
h_{T}\|\mathcal{R}_{T,2}\|_{T} \leq h_{T}\|\bbf - \boldsymbol\pi_{T}\bbf\|_{T} + h_{T}\|\tilde{\mathcal{R}}_{T,2}\|_{T}.
\end{align}
Now, a simple computation reveals, in view of \eqref{eq:weak_st_eq}, that
\begin{align}\label{eq:identity_R1_J1}
&(\mu^{-1}\cu(\by^{*} - \by^{*}_{h}),\cu \bw)_{\Omega} - \omega^{2}((\bsiep\cdot\mathbf{u}^{*})(\by^{*} - \by^{*}_{h}),\bw)_{\Omega} \\
= \sum_{T \in \T}&(\tilde{\mathcal{R}}_{T,2},\bw)_{T} - \sum_{S\in \mathcal{S}}(\mathcal{J}_{S,2},\bw)_{S} + (\bbf - \boldsymbol\pi_{T}\bbf,\bw)_{\Omega}  - \omega^{2}((\bsiep\cdot[\mathbf{u}_{h}^{*} - \mathbf{u}^{*}]\by^{*}_{h},\bw)_{\Omega} \nonumber
\end{align}
for all $\bw\in \mathbf{H}_{0}(\cu,\Omega)$. We now invoke the bubble function $b_{T}$, introduced in Lemma \ref{lemma:burbuja}, set $\bw = b_{T}\tilde{\mathcal{R}}_{T,2}\in \mathbf{H}_0^{1}(T)$ in \eqref{eq:identity_R1_J1}, and use basic inequalities to obtain
\begin{align*}
\|\tilde{\mathcal{R}}_{T,2}\|_{T}^{2}
\lesssim &
\|\bbf - \boldsymbol\pi_{T}\bbf\|_{T}\|\tilde{\mathcal{R}}_{T,2}\|_{T} + \|\mathbf{u}^{*} - \mathbf{u}^{*}_{h}\|_{\mathbb{R}^{\ell}}\|\by^{*}_{h}\|_{T}\|\tilde{\mathcal{R}}_{T,2}\|_{T}\\
& ~ + \|\mathbf{u}^{*}\|_{\mathbb{R}^{\ell}}\|\by^{*} - \by^{*}_{h}\|_{T}\|\tilde{\mathcal{R}}_{T,2}\|_{T} + \|\cu (\by^{*} - \by^{*}_{h})\|_{T}\|\cu (b_{T}\tilde{\mathcal{R}}_{T,2})\|_{T},
\end{align*}
upon using the properties of $b_{T}$ provided in Lemma \ref{lemma:burbuja}. Hence, a standard inverse estimate and the bounds $\|\by^{*}_{h}\|_{T}\leq \|\by^{*}_{h}\|_{\Omega} \lesssim \|\bbf\|_{\Omega}$ and $ \|\mathbf{u}^{*}\|_{\mathbb{R}^{\ell}} \leq \|\mathbf{b}\|_{\mathbb{R}^{\ell}}$ yield 
\begin{equation*}
h_{T}\|\tilde{\mathcal{R}}_{T,2}\|_{T}
\lesssim
h_{T}\|\bbf - \boldsymbol\pi_{T}\bbf\|_{T} + h_{T}\|\mathbf{u}^{*} - \mathbf{u}^{*}_{h}\|_{\mathbb{R}^{\ell}} + h_{T}\|\by^{*} - \by^{*}_{h}\|_{T} + \|\cu (\by^{*} - \by^{*}_{h})\|_{T},
\end{equation*}
which, in view of \eqref{eq:triangle_RT1}, allows us to conclude that 
\begin{align}\label{eq:final_RT1}
h_{T}\|\mathcal{R}_{T,2}\|_{T}
\lesssim & ~
h_{T}\|\bbf\! -\! \boldsymbol\pi_{T}\bbf\|_{T} + h_{T}\|\mathbf{u}^{*}\! -\! \mathbf{u}^{*}_{h}\|_{\mathbb{R}^{\ell}} 
  + h_{T}\|\by^{*}\! -\! \by^{*}_{h}\|_{T} + \|\cu (\by^{*} - \by^{*}_{h})\|_{T}. \nonumber
\end{align}

\underline{Step 2.} (estimation of $h_{T}^{\frac{1}{2}}\|\mathcal{J}_{S,2}\|_{S}$) Let $T\in \T_{h}$ and $S\in \mathscr{S}_{T}^{I}$. Invoke the bubble function $b_{S}$ from Lemma \ref{lemma:burbuja}, use $\bw = b_{S}\mathcal{J}_{S,2}$ in \eqref{eq:identity_R1_J1}, and a standard inverse estimate in combination with the properties of $b_{S}$ to arrive at 
\begin{align*}
\|\mathcal{J}_{S,2}\|_{S}^{2}
\lesssim &
\sum_{T'\in \mathcal{N}_{S}}\left(\|\mathcal{R}_{T,2}\|_{T'} + \|\mathbf{u}^{*} - \mathbf{u}^{*}_{h}\|_{\mathbb{R}^{\ell}}\|\by^{*}_{h}\|_{T'} \right.\\
&  \left.  +\, h_{T'}^{-1}\|\cu(\by^{*} - \by^{*}_{h})\|_{T'}
+  \|\mathbf{u}^{*}\|_{\mathbb{R}^{\ell}}\|\by^{*} - \by^{*}_{h}\|_{T'}\right)h_{T}^{\frac{1}{2}}\|\mathcal{J}_{S,1}\|_{S}.
\end{align*}
We thus conclude, in light of $\|\by^{*}_{h}\|_{T'}\lesssim \|\bbf\|_{\Omega}$ and estimate \eqref{eq:final_RT1}, the estimation
\begin{align*}
\|\mathcal{J}_{S,2}\|_{S}
\lesssim & ~
h_{T}\|\mathbf{u}^{*} - \mathbf{u}^{*}_{h}\|_{\mathbb{R}^{\ell}} \\
&~ + \sum_{T'\in \mathcal{N}_{S}}\!\left(h_{T}\|\bbf \!-\! \boldsymbol\pi_{T}\bbf\|_{T'} + h_{T}\|\by^{*} \!-\! \by^{*}_{h}\|_{T'} + \|\cu (\by^{*} \!-\! \by^{*}_{h})\|_{T'}\right).
\end{align*}

\underline{Step 3.} (estimation of $h_{T}\|\mathcal{R}_{T,1}\|_{T}$) Let $T\in \T_{h}$. We define the term $\tilde{\mathcal{R}}_{T,1}:= (\pi_{T}\text{div}\,\bbf-\text{div}(\omega^{2}(\bsiep\cdot\mathbf{u}^{*}_{h})\by^{*}_{h}))|^{}_T$. The triangle inequality thus yields
\begin{equation}\label{eq:triangle_RT2}
h_{T}\|\mathcal{R}_{T,1}\|_{T} \leq h_{T}\|\text{div}\,\bbf - \pi_{T}\text{div}\,\bbf\|_{T} + h_{T}\|\tilde{\mathcal{R}}_{T,1}\|_{T}.
\end{equation}
On the other hand, in light of \eqref{eq:weak_st_eq}, we have
\begin{align}\label{eq:identity_R2}
&(\mu^{-1}\cu(\by^{*} - \by^{*}_{h}),\cu \bw)_{\Omega} - \omega^{2}((\bsiep\cdot\mathbf{u}^{*})(\by^{*} - \by^{*}_{h}),\bw)_{\Omega} \\
= \!\! \sum_{T \in \T} \!\!  \big( (\bbf + & \, \omega^{2}(\bsiep\cdot \mathbf{u}^{*}_{h})\by^{*}_{h},\bw)_{T}   - \! (\mu^{-1}\cu \by^{*}_{h},\cu \bw)_{T} - \omega^{2}((\bsiep\cdot [\mathbf{u}_{h}^{*}\! - \! \mathbf{u}^{*}])\by^{*}_{h},\bw)_{T}\big) \nonumber
\end{align}
for all $\bw\in \mathbf{H}_{0}(\cu,\Omega)$. 
We then set $\bw = \nabla(b_{T}\tilde{\mathcal{R}}_{T,1})$ in the latter identity, and apply an integration by parts formula to obtain
\begin{align*}
& \omega^{2} ((\bsiep\cdot\mathbf{u}^{*})(\by^{*} - \by^{*}_{h}),\nabla(b_{T}\tilde{\mathcal{R}}_{T,1}))_{T} - \omega^{2}((\bsiep\cdot [\mathbf{u}_{h}^{*} - \mathbf{u}^{*}])\by^{*}_{h}, \nabla(b_{T}\tilde{\mathcal{R}}_{T,1}))_{T} \\
& =  \|b_{T}^{1/2}\tilde{\mathcal{R}}_{T,1}\|_{T}^{2} + (\text{div}\,\bbf - \pi_{T}\text{div}\,\bbf,b_{T}\tilde{\mathcal{R}}_{T,1})_{T}.
\end{align*}
Therefore, utilizing standard inverse estimates in combination with the properties of $b_{T}$ we obtain $
h_{T}\|\tilde{\mathcal{R}}_{T,1}\|_{T}
\lesssim
\|\by^{*} - \by^{*}_{h}\|_{T} + \|\mathbf{u}^{*} - \mathbf{u}^{*}_{h}\|_{\mathbb{R}^{\ell}} + h_{T}\|\text{div}\,\bbf - \pi_{T}\text{div}\,\bbf\|_{T},$
which, in view of \eqref{eq:triangle_RT2}, implies that
\begin{align}\label{eq:final_RT2}
h_{T}\|\mathcal{R}_{T,1}\|_{T}
\lesssim
\|\by^{*} - \by^{*}_{h}\|_{T} + \|\mathbf{u}^{*} - \mathbf{u}^{*}_{h}\|_{\mathbb{R}^{\ell}} + h_{T}\|\text{div}\,\bbf - \pi_{T}\text{div}\,\bbf\|_{T}.
\end{align}

\underline{Step 4.} (estimation of $h_{T}^{\frac{1}{2}}\|\mathcal{J}_{S,1}\|_{S}$) Let $T\in \T_{h}$ and $S\in \mathscr{S}_{T}^{I}$. Define $\tilde{\mathcal{J}}_{S,1} :=  \jump{(\boldsymbol\pi_{T}\bbf +\omega^{2} (\bsiep\cdot\mathbf{u}^{*}_{h})\by^{*}_{h})\cdot\boldsymbol{n}}$. An application of the triangle inequality results in
\begin{equation}\label{eq:triangle_JS2}
h_{T}^{\frac{1}{2}}\|\mathcal{J}_{S,1}\|_{S} \leq h_{T}^{\frac{1}{2}}\|\jump{(\bbf - \boldsymbol\pi_{T}\bbf)\cdot\boldsymbol{n}}\|_{S} + h_{T}^{\frac{1}{2}}\|\tilde{\mathcal{J}}_{S,1}\|_{S}.
\end{equation}
Invoke the bubble function $b_{S}$ from Lemma \ref{lemma:burbuja}, use $\bw = \nabla(b_{S}\tilde{\mathcal{J}}_{S,1})$ in \eqref{eq:identity_R2}, and apply an integration by parts formula. These arguments yield the identity
\begin{align*}
&\sum_{T'\in\mathcal{N}_{S}}\left(-\omega^{2} ((\bsiep\cdot\mathbf{u}^{*})(\by^{*} - \by^{*}_{h}),\nabla(b_{T}\mathcal{J}_{S,1}))_{T'} + \omega^{2}((\bsiep\cdot [\mathbf{u}_{h}^{*} - \mathbf{u}^{*}])\by^{*}_{h}, \nabla(b_{S}\mathcal{J}_{T,1})_{T'}\right) \\
& \quad = \|b_{S}^{1/2}\tilde{\mathcal{J}}_{S,1}\|_{S}^{2} + (\jump{(\bbf - \boldsymbol\pi_{T}\bbf)\cdot\boldsymbol{n}},b_{S}\tilde{\mathcal{J}}_{S,1})_{S} - \sum_{T'\in\mathcal{N}_{S}}(\mathcal{R}_{T,1},b_{S}\tilde{\mathcal{J}}_{S,1})_{T'}.
\end{align*}
We thus utilize inverse estimates in combination with the properties of $b_{S}$ to obtain
\begin{equation*}
h_{T}^{\frac{1}{2}}\|\tilde{\mathcal{J}}_{S,1}\|_{S}
\lesssim
\|\mathbf{u}^{*} - \mathbf{u}^{*}_{h}\|_{\mathbb{R}^{\ell}} + \sum_{T'\in\mathcal{N}_{S}}(\|\by^{*} - \by^{*}_{h}\|_{T'} + h_{T}\|\mathcal{R}_{T,1}\|_{T'} + h_{T}^{\frac{1}{2}}\|\jump{(\bbf - \boldsymbol\pi_{T}\bbf)\cdot\boldsymbol{n}}\|_{S}).
\end{equation*}
The combination of the latter estimate and estimates \eqref{eq:triangle_JS2} and \eqref{eq:final_RT2} results in
\begin{align*}
h_{T}^{\frac{1}{2}}\|\mathcal{J}_{S,1}\|_{S}
\lesssim & \,
\|\mathbf{u}^{*} - \mathbf{u}^{*}_{h}\|_{\mathbb{R}^{\ell}} + \sum_{T'\in\mathcal{N}_{S}}(\|\by^{*} - \by^{*}_{h}\|_{T'} \\
& ~+ h_{T}\|\text{div}\,\bbf - \pi_{T'}\text{div}\,\bbf\|_{T'}+ h_{T}^{\frac{1}{2}}\|\jump{(\bbf - \boldsymbol\pi_{T}\bbf)\cdot\boldsymbol{n}}\|_{S}).
\end{align*}

We end the proof in view of the estimates obtained in the four previous steps.
\end{proof}

\begin{theorem}[local efficiency of $\mathcal{E}_{adj,T}$]\label{thm:eff_adj}
Let $\mathbf{u}^{*} \in U_{ad}$ be a local solution to \eqref{eq:weak_min_problem}--\eqref{eq:weak_st_eq}. Let $\mathbf{u}^{*}_{h}$ be a local minimum of the discrete optimal control problem with $\by^{*}_{h}$ and $\bp^{*}_{h}$ being the corresponding state and adjoint state, respectively. Then, for $T\in\T_{h}$, the local error indicator $\mathcal{E}_{adj,T}$ satisfies the  bound
\begin{align*}
\mathcal{E}_{adj,T}
&\lesssim 
\, \|\mathbf{u}^{*} - \mathbf{u}^{*}_{h}\|_{\mathbb{R}^{l}} + \|\by^{*} - \by^{*}_{h}\|_{\mathbf{H}(\cu,\mathcal{N}_{T})} + \|\bp^{*} - \bp^{*}_{h}\|_{\mathbf{H}(\cu,\mathcal{N}_{T})} + \mathrm{osc}(\by_{\Omega};T) \\
&  + \sum_{T'\in \mathcal{N}_T}h_{T'}\|\cu \bE_{\Omega}- \boldsymbol\pi_{T}\cu\bE_{\Omega}\|_{T'} +  \sum_{S'\in\mathscr{S}_{T}^{I}}h_{T}^{\frac{1}{2}}\|\jump{(\bE_{\Omega} - \boldsymbol\pi_{T}\bE_{\Omega})\times\boldsymbol{n}}\|_{S'},  \nonumber
 \end{align*}
where $\mathcal{N}_{T}$ is defined in \eqref{def:patch}. The hidden constant is independent of  continuous and discrete optimal variables, the size of the elements in $\T_{h}$, and $\#\T_{h}$.
\end{theorem}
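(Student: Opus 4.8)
The plan is to reproduce, almost verbatim, the four-step bubble-function argument of Theorem \ref{thm:eff_st}, now applied to the residuals of the adjoint equation \eqref{eq:adj_eq}. I first introduce, for $T\in\T_{h}$ and $S\in\mathscr{S}_{T}^{I}$, the divergence-type residuals $\mathcal{R}_{T,1}$, $\mathcal{J}_{S,1}$ built from $\overline{\by^{*}_{h}-\by_{\Omega}}+\omega^{2}(\bsiep\cdot\mathbf{u}^{*}_{h})\bp^{*}_{h}$, and the curl-type residuals $\mathcal{R}_{T,2}$, $\mathcal{J}_{S,2}$ built from $\overline{\by^{*}_{h}-\by_{\Omega}}+\cu(\overline{\cu\by^{*}_{h}-\bE_{\Omega}})-\cu(\mu^{-1}\cu\bp^{*}_{h})+\omega^{2}(\bsiep\cdot\mathbf{u}^{*}_{h})\bp^{*}_{h}$ and $(\overline{\cu\by^{*}_{h}-\bE_{\Omega}}-\mu^{-1}\cu\bp^{*}_{h})\times\bn$, so that $\mathcal{E}_{adj,T}^{2}=\sum_{k=1}^{2}\bigl(h_{T}^{2}\|\mathcal{R}_{T,k}\|_{T}^{2}+\tfrac{h_{T}}{2}\sum_{S\in\mathscr{S}_{T}^{I}}\|\mathcal{J}_{S,k}\|_{S}^{2}\bigr)$.

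The engine of the proof is a residual identity obtained by subtracting \eqref{eq:discrete_adjoint_equation} from \eqref{eq:adj_eq} (written with $\mathbf{u}^{*},\by^{*}$) and integrating by parts elementwise. Since the continuous adjoint problem is driven by $\by^{*}$ and evaluated at $\mathbf{u}^{*}$, whereas the residuals use $\by^{*}_{h}$ and $\mathbf{u}^{*}_{h}$, the resulting identity for the error $\bp^{*}-\bp^{*}_{h}$ has left-hand side
\[
(\mu^{-1}\cu(\bp^{*}-\bp^{*}_{h}),\cu\bw)_{\Omega}-\omega^{2}((\bsiep\cdot\mathbf{u}^{*})(\bp^{*}-\bp^{*}_{h}),\bw)_{\Omega}
\]
and right-hand side equal to the sum of the projected volume/face residual contributions plus three additional families: data-oscillation terms arising from replacing $\by_{\Omega}$, $\cu\bE_{\Omega}$ and $\bE_{\Omega}$ by their elementwise $\mathbf{L}^{2}$-projections; the state-error term $(\overline{\by^{*}-\by^{*}_{h}},\bw)_{\Omega}+(\overline{\cu(\by^{*}-\by^{*}_{h})},\cu\bw)_{\Omega}$; and the control-error term $\omega^{2}((\bsiep\cdot[\mathbf{u}^{*}_{h}-\mathbf{u}^{*}])\bp^{*}_{h},\bw)_{\Omega}$. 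Testing this identity successively against the localized functions $b_{T}\tilde{\mathcal{R}}_{T,2}$, $b_{S}\mathcal{J}_{S,2}$, $\nabla(b_{T}\tilde{\mathcal{R}}_{T,1})$ and $\nabla(b_{S}\tilde{\mathcal{J}}_{S,1})$ (tildes denoting the projected residuals), and invoking Lemma \ref{lemma:burbuja}, standard inverse estimates, and the stability bounds $\|\bp^{*}_{h}\|_{\mathbf{H}(\cu,\Omega)}\lesssim\|\bbf\|_{\Omega}+\|\by_{\Omega}\|_{\Omega}+\|\bE_{\Omega}\|_{\Omega}$ and $\|\mathbf{u}^{*}\|_{\mathbb{R}^{\ell}}\le\|\mathbf{b}\|_{\mathbb{R}^{\ell}}$, I would bound $h_{T}\|\mathcal{R}_{T,2}\|_{T}$, $h_{T}^{1/2}\|\mathcal{J}_{S,2}\|_{S}$, $h_{T}\|\mathcal{R}_{T,1}\|_{T}$ and $h_{T}^{1/2}\|\mathcal{J}_{S,1}\|_{S}$ exactly as in Steps 1--4 of Theorem \ref{thm:eff_st}. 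The left-hand side of the identity produces the $\|\bp^{*}-\bp^{*}_{h}\|_{\mathbf{H}(\cu,\mathcal{N}_{T})}$ contribution, the state-error term produces $\|\by^{*}-\by^{*}_{h}\|_{\mathbf{H}(\cu,\mathcal{N}_{T})}$, and the control-error term produces $\|\mathbf{u}^{*}-\mathbf{u}^{*}_{h}\|_{\mathbb{R}^{\ell}}$; summing the four steps yields the asserted bound.

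The main obstacle, compared with the state estimator, is the more delicate oscillation bookkeeping forced by the richer right-hand side $\by_{\Omega}$, $\bE_{\Omega}$, $\cu\bE_{\Omega}$. In particular, the term $\cu(\overline{\cu\by^{*}_{h}-\bE_{\Omega}})$ in $\mathcal{R}_{T,2}$ is only meaningful after projecting $\cu\bE_{\Omega}$ onto piecewise constants, so I must track the discrepancy $\cu\bE_{\Omega}-\boldsymbol\pi_{T}\cu\bE_{\Omega}$ and verify that it is absorbed into $\sum_{T'\in\mathcal{N}_{T}}h_{T'}\|\cu\bE_{\Omega}-\boldsymbol\pi_{T}\cu\bE_{\Omega}\|_{T'}$; symmetrically, the face contribution $(\overline{\cu\by^{*}_{h}-\bE_{\Omega}})\times\bn$ must be shown to generate precisely the jump-oscillation $\|\jump{(\bE_{\Omega}-\boldsymbol\pi_{T}\bE_{\Omega})\times\bn}\|_{S'}$. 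Once these projection residuals are correctly isolated, every remaining estimate is formally identical to the state-equation case, and the proof concludes by collecting the four local bounds over the faces of $T$.
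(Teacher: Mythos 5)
Your proposal is correct and follows exactly the route the paper takes: the paper's own proof of this theorem consists of the single remark that it follows analogous arguments to the ones in the proof of Theorem \ref{thm:eff_st}, which is precisely the four-step bubble-function adaptation you spell out (residual identity from subtracting \eqref{eq:discrete_adjoint_equation} from \eqref{eq:adj_eq}, testing with $b_{T}$- and $b_{S}$-localized residuals and with their gradients, Lemma \ref{lemma:burbuja}, inverse estimates, and stability bounds). Your explicit bookkeeping of the extra data-oscillation terms for $\by_{\Omega}$, $\bE_{\Omega}$ and $\cu\bE_{\Omega}$, and of the state- and control-error contributions in the residual identity, fills in the details the paper omits in exactly the intended way.
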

\begin{proof}
The proof follows analogous arguments to the ones provided in the proof of Theorem \ref{thm:eff_st}. For brevity, we skip details.
\end{proof}

We conclude this section with the following result, which is a direct consequence of Theorems \ref{thm:eff_st} and \ref{thm:eff_adj}.

\begin{corollary}[efficiency of $\mathcal{E}_{ocp,T}$]
In the framework of Theorems \ref{thm:eff_st} and \ref{thm:eff_adj} we have, for $T\in\T_{h}$, that the local error indicator $\mathcal{E}_{ocp,T}$ satisfies the bound
\begin{multline*}
\mathcal{E}_{ocp,T}
\lesssim
\|\mathbf{u}^{*} - \mathbf{u}^{*}_{h}\|_{\mathbb{R}^{l}} + \|\by^{*} - \by^{*}_{h}\|_{\mathbf{H}(\cu,\mathcal{N}_{T})} + \|\bp^{*} - \bp^{*}_{h}\|_{\mathbf{H}(\cu,\mathcal{N}_{T})} + \mathrm{osc}(\bbf;T) \\
 \! + \mathrm{osc}(\by_{\Omega};T) + \!\!\!\!  \sum_{T'\in \mathcal{N}_T} \!\! h_{T'}\|\cu \bE_{\Omega} - \boldsymbol\pi_{T}\cu\bE_{\Omega}\|_{T'} + \!\!\!\! \sum_{S'\in\mathscr{S}_{T}^{I}} \!\!  h_{T}^{\frac{1}{2}}\|\jump{(\bE_{\Omega} - \boldsymbol\pi_{T}\bE_{\Omega}) \! \times \! \boldsymbol{n}}\|_{S'},
\end{multline*}
where $\mathcal{N}_{T}$ is defined in \eqref{def:patch}. The hidden constant is independent of  continuous and discrete optimal variables, the size of the elements in $\T_{h}$, and $\#\T_{h}$.
\end{corollary}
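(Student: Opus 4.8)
The plan is to exploit the additive structure of the local estimator, namely the natural local definition $\mathcal{E}_{ocp,T}^2 = \mathcal{E}_{st,T}^2 + \mathcal{E}_{adj,T}^2$ (which is consistent with the global estimator \eqref{def:error_estimator_ocp} since summing over $T\in\T_{h}$ recovers $\mathcal{E}_{ocp,\T_h}^2 = \mathcal{E}_{st,\T_h}^2 + \mathcal{E}_{adj,\T_h}^2$), and reduce the claim to the two efficiency bounds already established in Theorems \ref{thm:eff_st} and \ref{thm:eff_adj}. First I would record the elementary inequality $\sqrt{a^2+b^2}\le a+b$, valid for all $a,b\ge 0$, which upon setting $a=\mathcal{E}_{st,T}$ and $b=\mathcal{E}_{adj,T}$ yields
\begin{align*}
\mathcal{E}_{ocp,T} \le \mathcal{E}_{st,T} + \mathcal{E}_{adj,T}.
\end{align*}
This linearizes the estimator and leaves only the task of bounding each summand separately.

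Next I would invoke Theorem \ref{thm:eff_st} to control $\mathcal{E}_{st,T}$ by $\|\mathbf{u}^{*} - \mathbf{u}^{*}_{h}\|_{\mathbb{R}^{l}} + \|\by^{*} - \by^{*}_{h}\|_{\mathbf{H}(\cu,\mathcal{N}_{T})} + \mathrm{osc}(\bbf;T)$, and Theorem \ref{thm:eff_adj} to control $\mathcal{E}_{adj,T}$ by the corresponding longer expression that additionally involves $\|\bp^{*} - \bp^{*}_{h}\|_{\mathbf{H}(\cu,\mathcal{N}_{T})}$, $\mathrm{osc}(\by_{\Omega};T)$, and the two data-oscillation sums associated with $\bE_{\Omega}$. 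Adding the two bounds and collecting the terms $\|\mathbf{u}^{*} - \mathbf{u}^{*}_{h}\|_{\mathbb{R}^{l}}$ and $\|\by^{*} - \by^{*}_{h}\|_{\mathbf{H}(\cu,\mathcal{N}_{T})}$, which appear in both right-hand sides, into a single occurrence (absorbing the factor of two into the hidden constant) reproduces precisely the asserted bound for $\mathcal{E}_{ocp,T}$.

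Since both theorems already carry constants independent of the continuous and discrete optimal variables, of the element sizes in $\T_{h}$, and of $\#\T_{h}$, the combined constant inherits the same independence and no further tracking is required. I do not anticipate any genuine obstacle here: the only mild point of care is that $\mathcal{E}_{ocp,T}$ is built from a \emph{sum of squares} rather than a sum of the indicators themselves, which is disposed of once and for all by the inequality $\sqrt{a^2+b^2}\le a+b$ in the first step. The statement is therefore an immediate consequence of the two preceding efficiency results.
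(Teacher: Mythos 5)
Your proposal is correct and matches the paper's intent exactly: the paper states the corollary as a direct consequence of Theorems \ref{thm:eff_st} and \ref{thm:eff_adj}, and your argument (adopting the natural local definition $\mathcal{E}_{ocp,T}^2=\mathcal{E}_{st,T}^2+\mathcal{E}_{adj,T}^2$, applying $\sqrt{a^2+b^2}\le a+b$, and summing the two local efficiency bounds) is precisely the omitted elementary verification.
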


%

\section{Numerical experiments}
\label{sec:num_ex}

In this section, we present three numerical tests in order to validate our theoretical findings and assess the performance of the proposed a posteriori error estimator $\mathcal{E}_{ocp,\T_{h}}$, defined in \eqref{def:error_estimator_ocp}. 
These experiments have been carried out with the help of a code that we implemented in a FEniCS script \cite{fenics_book} by using lowest-order N\'ed\'elec elements.  

In the following numerical examples, we shall restrict to the case where all the functions and variables present in the optimal control problem are real-valued. 
This, with the aim of simplifying numerical computations, acknowledging that the inclusion of complex variables would significantly increase computational costs. 
In particular, and following Remark~\ref{rmk:real2}, we consider the following problem: $\min \calJ(\by,\mathbf{u})$
subject to 
\[
\cu \chi\cu \by  + (\kappa\cdot \mathbf{u})\by= \bbf \quad \mbox{in } \Omega, \qquad
\by\times \bn= \mathbf{0} \quad \mbox{on } \Gamma,
\]
and the control constraints $\mathbf{u}=(\mathbf{u}_{1},\ldots,\mathbf{u}_{\ell})\in U_{ad}$ and $U_{ad}:=\left\{\mathbf{v} \in \mathbb{R}^{\ell}: \mathbf{a}\leq \mathbf{v}\leq \mathbf{b}\right\}$. We recall that real-valued coefficients $\kappa,\chi \in P\rW^{1,\infty}(\Omega)$ satisfy  $\kappa \geq \kappa_0 >0 $ 
and $\chi \geq \chi_0 >0 $ with $\kappa_0, \mu_0\in \mathbb{R}^{+}$ and that $\kappa\cdot \mathbf{u} = \sum_{k=1}^{\ell}\kappa|_{\Omega_{k}}^{}\mathbf{u}_{k}$.

\subsection{Implementation issues}
In this section we briefly discuss implementation details of the discretization strategy proposed in section \ref{sec:fem_for_ocp}.

For a given mesh $\mathscr{T}_{h}$, we seek $(\by^{*}_{h},\bp^{*}_{h},\mathbf{u}^{*}_{h}) \in \mathbf{V}(\T_{h})\times \mathbf{V}(\T_{h}) \times U_{ad}$ that solves 
\begin{align*}
\begin{cases}
(\mu^{-1}\cu \by_{h}^{*}, \cu \bv_{h})_{\Omega} +((\kappa\cdot \mathbf{u}^{*}_{h}) \by^{*}_{h},\bv_{h})_{\Omega} = & \hspace{-0.4cm} (\bbf,\bv_{h})_{\Omega}, \\
(\mu^{-1}\cu \bp^{*}_{h}, \cu \bw_{h})_{\Omega} +((\kappa\cdot \mathbf{u}^{*}_{h})\bp^{*}_{h},\bw_{h})_{\Omega} = & \hspace{-0.3cm} (\by^{*}_{h} - \by_{\Omega},\bw_{h})_{\Omega} \\
& \hspace{-0.2cm} + \, (\cu \by^{*}_{h} - \bE_{\Omega}, \cu \bw_{h})_{\Omega},\\ 
\sum_{k=1}^{\ell}\left(\alpha(\mathbf{u}^{*}_{h})_{k} - \int_{\Omega_{k}}\kappa\by^{*}_{h}\cdot\bp^{*}_{h}\right)(\mathbf{u}_{k} - (\mathbf{u}^{*}_{h})_{k}) \geq & \hspace{-0.3cm} 0,
\end{cases}
\end{align*}
for all $(\bv_{h},\bw_{h},\mathbf{u}_{h}) \in \mathbf{V}(\T_{h})\times \mathbf{V}(\T_{h}) \times U_{ad}$. 
This \emph{discrete optimality system} is solved by using a semi-smooth Newton method. 
To present the latter, we define $\mathbf{X}(\T_{h}) := \mathbf{V}(\T_{h})\times \mathbf{V}(\T_{h}) \times \mathbb{R}^{\ell}$ and introduce, for $\boldsymbol\eta = (\by_{h},\bp_{h},\mathbf{u}_{h})$ and $\Theta = (\bv_{h},\bw_{h},\mathbf{u}_{h}) $ in $\mathbf{X}(\T_{h})$, the operator $F_{\T_{h}}: \mathbf{X}(\T_{h}) \rightarrow \mathbf{X}(\T_{h})'$, whose dual action on $\Theta$, i.e. $\langle F_{\T_{h}}(\Psi),\Theta \rangle^{}_{ \mathbf{X}(\T_{h})', \mathbf{X}(\T_{h})}$, is defined by
\begin{align*}
\begin{pmatrix} 
(\mu^{-1}\cu \by_{h}, \cu \bv_{h})_{\Omega} +((\kappa\cdot \mathbf{u}_{h}) \by_{h} - \bbf,\bv_{h})_{\Omega}  \\
(\mu^{-1}\cu \bp_{h} - \cu \by_{h} + \bE_{\Omega}, \cu \bw_{h})_{\Omega} +((\kappa\cdot \mathbf{u}_{h})\bp^{*}_{h} - \by_{h} + \by_{\Omega},\bw_{h})_{\Omega}  \\
(\mathbf{u}_{h})_{1} - \mathbf{c}_{1} - \max\{\mathbf{a}_{1} - \mathbf{c}_{1}, 0\} + \max\{\mathbf{c}_{1} - \mathbf{b}_{1}, 0\}\\
\vdots \\
(\mathbf{u}_{h})_{\ell} - \mathbf{c}_{\ell} - \max\{\mathbf{a}_{\ell} - \mathbf{c}_{\ell}, 0\} + \max\{\mathbf{c}_{\ell} - \mathbf{b}_{\ell}, 0\}
\end{pmatrix},
\end{align*}
where $\mathbf{c}_{k}:= -\alpha^{-1}\int_{\Omega_{k}}\kappa\by_{h}\cdot\bp_{h}$ with $k\in\{1,\ldots,\ell\}$. 
Given an initial guess $\boldsymbol\eta_{0} = (\by_{h}^{0},\bp_{h}^{0},\mathbf{u}_{h}^{0}) \in \mathbf{X}(\T_{h})$ and $j \in \mathbb{N}_{0}$, we consider the following Newton iteration $
 \boldsymbol\eta_{j+1} = \boldsymbol\eta_{j} + \delta\boldsymbol\eta$, where the incremental term $ \delta\boldsymbol\eta \! = \! (\delta \by_{h},\delta \bp_{h},\delta \mathbf{u}_{h})\in\mathbf{X}(\T_{h})$ solves 
\begin{align}\label{eq:Newton_method_iter}
\langle F'_{\T_{h}}(\boldsymbol\eta_{j})(\delta\boldsymbol\eta), \Theta \rangle^{}_{ \mathbf{X}(\T_{h})', \mathbf{X}(\T_{h})} = -\langle F_{\T_{h}}(\boldsymbol\eta_{j}),\Theta \rangle^{}_{ \mathbf{X}(\T_{h})', \mathbf{X}(\T_{h})}
\end{align}
for all $\Theta = (\bv_{h},\bw_{h},\mathbf{u}_{h})\in \mathbf{X}(\T_{h})$.
Here, $F'_{\T_{h}}(\boldsymbol\eta_{j})(\delta\boldsymbol\eta)$ denotes the G\^ateaux derivate of $F_{\T_{h}}$ at $\boldsymbol\eta_{j} = (\by_{h}^{j},\bp_{h}^{j},\mathbf{u}_{h}^{j})$ in the direction $\delta\boldsymbol\eta$. 
We immediately notice that, in the semi-smooth Newton method, we apply the following derivative to $\max\{\cdot,0\}$:
\begin{align*}
\max\{c,0\}'
=
1 ~ \text{ if } c \geq 0, 
\qquad
\max\{c,0\}'
=
0 ~ \text{ if } c < 0.
\end{align*}

To apply the adaptive finite element method, we generate a sequence of nested conforming triangulations using the adaptive procedure described in \textbf{Algorithm} \ref{Algorithm}.
\begin{algorithm}[!h]
\caption{\textbf{ Adaptive Algorithm.}}
\label{Algorithm}
\small
\textbf{Input:} Initial mesh $\mathscr{T}_{0}$, data $\bbf$, desired states $\by_{\Omega}$ and $\bE_{\Omega}$, functions $\boldsymbol\chi$ and $\kappa$, vector constraints $\mathbf{a}$ and $\mathbf{b}$, and control cost $\alpha$.
\\
\textbf{Set:} $n=0$.
\\
\textbf{Active set strategy:}
\\
$\boldsymbol{1}:$ Choose initial discrete guess $\boldsymbol\eta_{0} = (\by_{n}^{0},\bp_{n}^{0},\mathbf{u}_{n}^{0}) \in \mathbf{X}(\T_{n})$. 
\\
$\boldsymbol{2}:$ Compute $[\by^{*}_{n},\bp^{*}_{n},\mathbf{u}^{*}_{n}]=\mathbf{SSNM}[\mathscr{T}_n,\boldsymbol\eta_{0}, \bbf, \by_{\Omega}, \bE_{\Omega}, \boldsymbol\chi, \kappa, \mathbf{a}, \mathbf{b}, \alpha]$, where \textbf{SSNM} implements Newton iteration \eqref{eq:Newton_method_iter}.
\\
\textbf{Adaptive loop:}
\\
$\boldsymbol{3}:$ For each $T\in\mathscr{T}_n$ compute the local indicators $\mathcal{E}_{st,T}$ and $\mathcal{E}_{adj,T}$ defined in section \ref{sec:glob_rel_ana}.
\\
$\boldsymbol{4}:$ Mark an element $T$ for refinement if $\zeta_{T}\geq 0.5\max_{T'\in\T_h}\zeta_{T'}$, with $\zeta_{T}\in\{\mathcal{E}_{st,T}, \mathcal{E}_{adj,T}\}$.
\\
$\boldsymbol{5}:$ From step $\boldsymbol{4}$, construct a new mesh, using a longest edge bisection algorithm. Set $n \leftarrow n + 1$ and go to step $\boldsymbol{1}$.
\end{algorithm}
 

\subsection{Test 1. Smooth solutions}
We consider this example to verify that the expected order of convergence 
is obtained when solutions of the control problem are smooth. 
In this context, we assume $\Omega:=(0,1)^3$, $\mathbf{a}=0.01$, $\mathbf{b}=5$, $\alpha=0.1$, $\chi=1$, and $\kappa=0.1$;
the source term $\bbf$, the desired states $\by_\Omega$ and $\bE_\Omega$, and the boundary conditions are chosen such that the exact optimal state and adjoint state are given by
\begin{align*}
\by^{*}(\boldsymbol{x}) &= (\cos(\pi x)\sin(\pi y)\sin(\pi z), 
          \sin(\pi x)\cos(\pi y)\sin(\pi z),
          \sin(\pi x)\sin(\pi y)\cos(\pi z)),\\
\bp^{*}(\boldsymbol{x}) &= -(x^2\sin(\pi y)\sin(\pi z),
             \sin(\pi x)\sin(\pi z),
             \sin(\pi x)\sin(\pi y)),
\end{align*}
where $\boldsymbol{x} = (x,y,z)$. 
Given the smoothness of the solution, we present the obtained errors and their experimental rates of convergence only with uniform refinement.
In particular, Table~\ref{tabla:error_square} 
shows the convergence history for  $ \|\by^{*} - \by^{*}_{h}\|_{\mathbf{H}(\cu,\Omega)}$ and  $ \|\bp^{*} - \bp^{*}_{h}\|_{\mathbf{H}(\cu,\Omega)}$. 
In the same table, the corresponding experimental convergence rates are shown in terms of the mesh size $h$.
We observe that the optimal rate of convergence is attained for 
both variables (cf.~ Theorem \ref{thm:extra_reg_Maxwell}\textrm{(ii)} and Corollary \ref{coro:error_estimate}).

\begin{table}[!ht]
	{\footnotesize
		\caption{Test 1: $\mathbf{H}(\cu,\Omega)$-error and experimental order of convergence for the approximations of $\by^{*}$ and $\bp^{*}$ with uniform refinement.}
		\label{tabla:error_square}
		\begin{center}
			\begin{tabular}{ l c l c l }
 \toprule
 			 \multicolumn{1}{l}{$h$} & $ \|\by^{*} - \by^{*}_{h}\|_{\mathbf{H}(\cu,\Omega)}$& 
Order & $ \|\bp^{*} - \bp^{*}_{h}\|_{\mathbf{H}(\cu,\Omega)}$    & Order \\
\midrule
				0.8660& 0.98925 & --    & 1.70729 & --         \\
				0.4330& 0.38458 & 0.825 & 0.96359 & 1.363 \\
				0.2165& 0.16768 & 0.961 & 0.49503 & 1.197 \\
				0.1082& 0.08271 & 0.986 & 0.24997 & 1.019  \\
				0.0541& 0.04609 & 0.972 & 0.12747 & 0.843 \\			
\bottomrule
			\end{tabular}
	\end{center}}
\end{table}

\subsection{Test 2. A 3D L-shaped domain} This test aims to assess the performance of the numerical scheme when solving the optimal control problem for a solution with a line singularity, with uniform and adaptive refinement. To this end, we consider the classical three-dimensional L-shape domain given by
\begin{align*}
\Omega:=(-1,1)\times(-1,1)\times(0,1)\backslash\bigg((0,1)\times(-1,0)\times(0,1) \bigg).
\end{align*}
An example of the initial mesh used for this example is depicted in Figure~\ref{fig:Lshape-mesh-estimador-theta} (left).
Let $\bbf$, $\by_\Omega$, and $\bE_\Omega$ be such that the exact solution of the optimal control problem with $\mathbf{a}=0.01$, $\mathbf{b}=1$, $\alpha=1$, $\chi=1$, $\kappa=0.01$ is $
\by^{*}=\bp^{*}=(\tfrac{\partial S }{\partial x},\frac{\partial S }{\partial y},0)$, where function $S$ is given, in terms of the polar coordinates $(r,\theta)$, by $S(r,\theta)=r^{2/3}\sin(2\theta/3)$.
Notice that  $(\by^{*},\bp^{*})$ have a line singularity located at $z-$axis, and the solution belongs only to $\mathbf{H}^{2/3-\epsilon}(\cu,\Omega)$ for any $\epsilon > 0$ (see, for instance, \cite{MR2051073}). According to \eqref{eq:total_error_apriori} the expected convergence rate should be $\mathcal{O}(h^{2/3-\epsilon})$ for any $\epsilon > 0$.

In Figure~\ref{fig:initial_curves} (right) we present experimental rates of convergence for $\|\by^{*}-\by^{*}_h\|_{\mathbf{H}(\cu,\Omega)}$, with uniform and adaptive refinement, in terms of the number of elements $N$ of the meshes. 
We observe that $\by_{h}^{*}$ converges to $\by^{*}$ 
with order $\mathcal{O}(N^{-0.2}) \approx \mathcal{O}(h^{0.6})$ for the uniform case, which is close to the 
expected order of convergence. 
On the other hand, the convergence for the adaptive scheme is  
$\mathcal{O}(N^{-0.3}) \approx \mathcal{O}(h^{0.9})$. 
We note that the adaptive scheme is able to recover the optimal order $\mathcal{O}(N^{-1/3}) \approx \mathcal{O}(h)$. 
In the same figure, we also present $\mathcal{E}_{ocp,\T_{h}}$ for each adaptive iteration. It notes that the estimator decays asymptotically as $\mathcal{O}(N^{-0.29})$.
We observe that the convergences of the a posteriori error estimator and the energy error are almost optimal. 
Due to the similarity in observed behavior between the approximation of $\bp^{*}$ and the previous results, 
both in terms of error and estimator performance, we have omitted its analysis for brevity.
Finally, in Figure~\ref{fig:Lshape-mesh-estimador-theta} (right) we observe a comparison between meshes in different adaptive iterations. It can be seen that the adaptive algorithm refine around the singularity produced by the re-entrant corner.

\begin{figure}[!h]
	\centering
	\begin{minipage}{0.41\linewidth}\centering
		\includegraphics[trim={0 0 0 0},clip,width=3.56cm,height=4.1cm,scale=0.8]{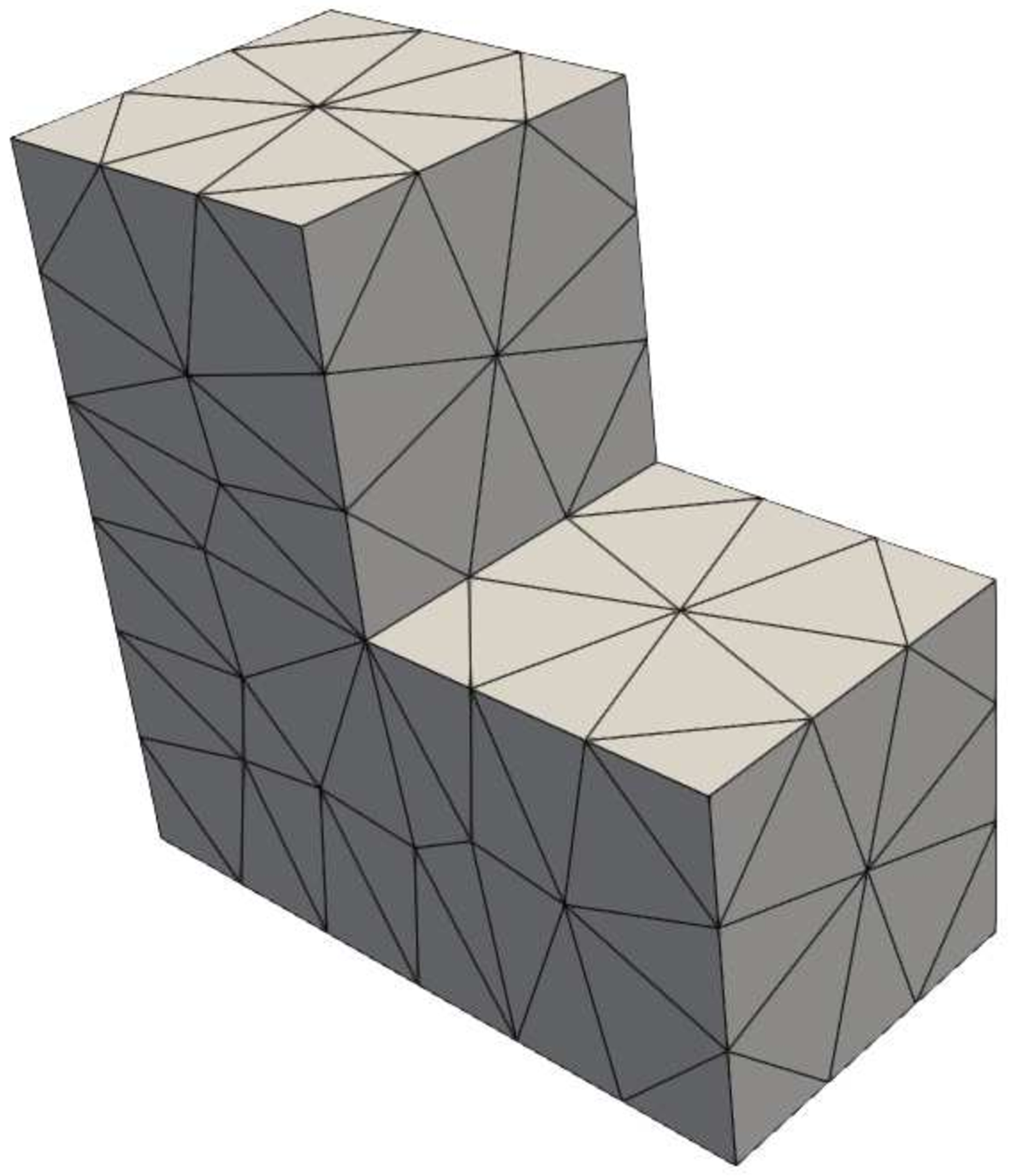}
	\end{minipage}
	\begin{minipage}{0.58\linewidth}\centering
		\includegraphics[trim={0 0 0 0},clip,width=5.5cm,height=4.1cm,scale=0.8]{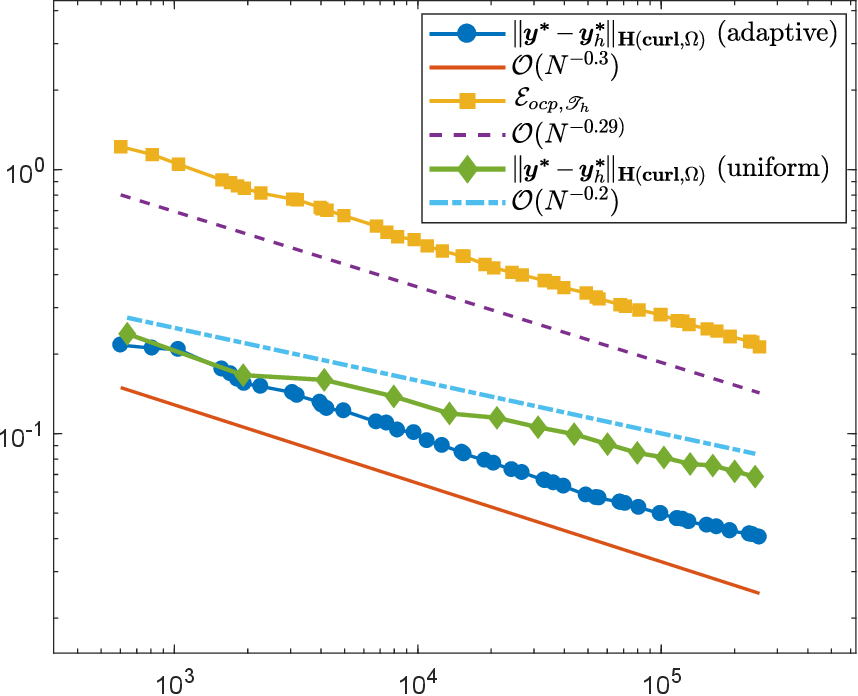}
	\end{minipage}\\
	\caption{Test 2. Left: Initial mesh for the L-shaped domain.  Right: Comparison between error curves for uniform and adaptive refinements,
together with computed values of estimator $\mathcal{E}_{ocp,\T_{h}}$. }
	\label{fig:initial_curves}
\end{figure}

%
%

\begin{figure}[!h]
	\centering
	\begin{minipage}{0.49\linewidth}\centering
		\includegraphics[trim={0 0 0 0},clip,width=4.23cm,height=4.8cm,scale=0.8]{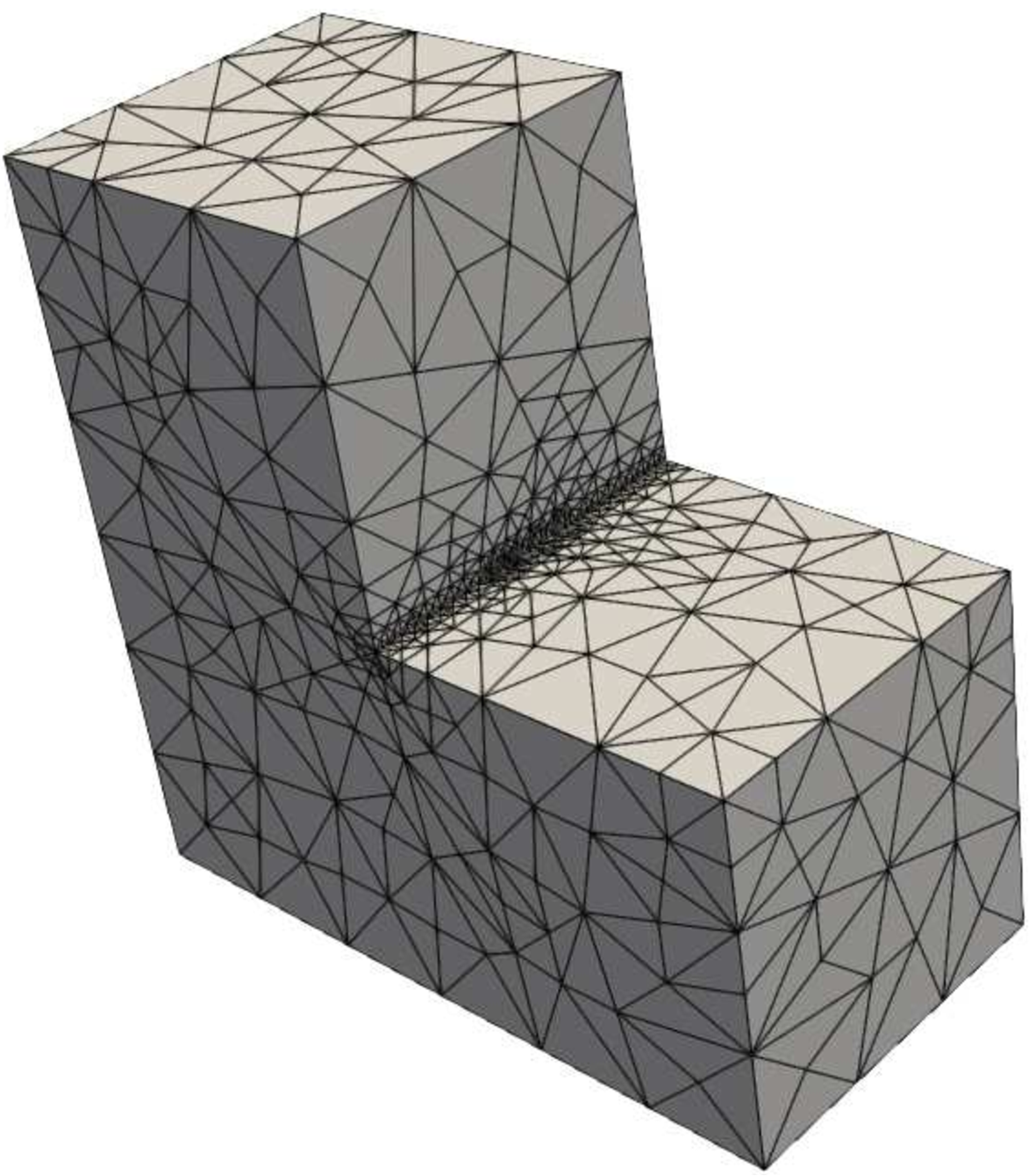}
	\end{minipage}
	\begin{minipage}{0.49\linewidth}\centering
		\includegraphics[trim={0 0 0 0},clip,width=4.23cm,height=4.8cm,scale=0.8]{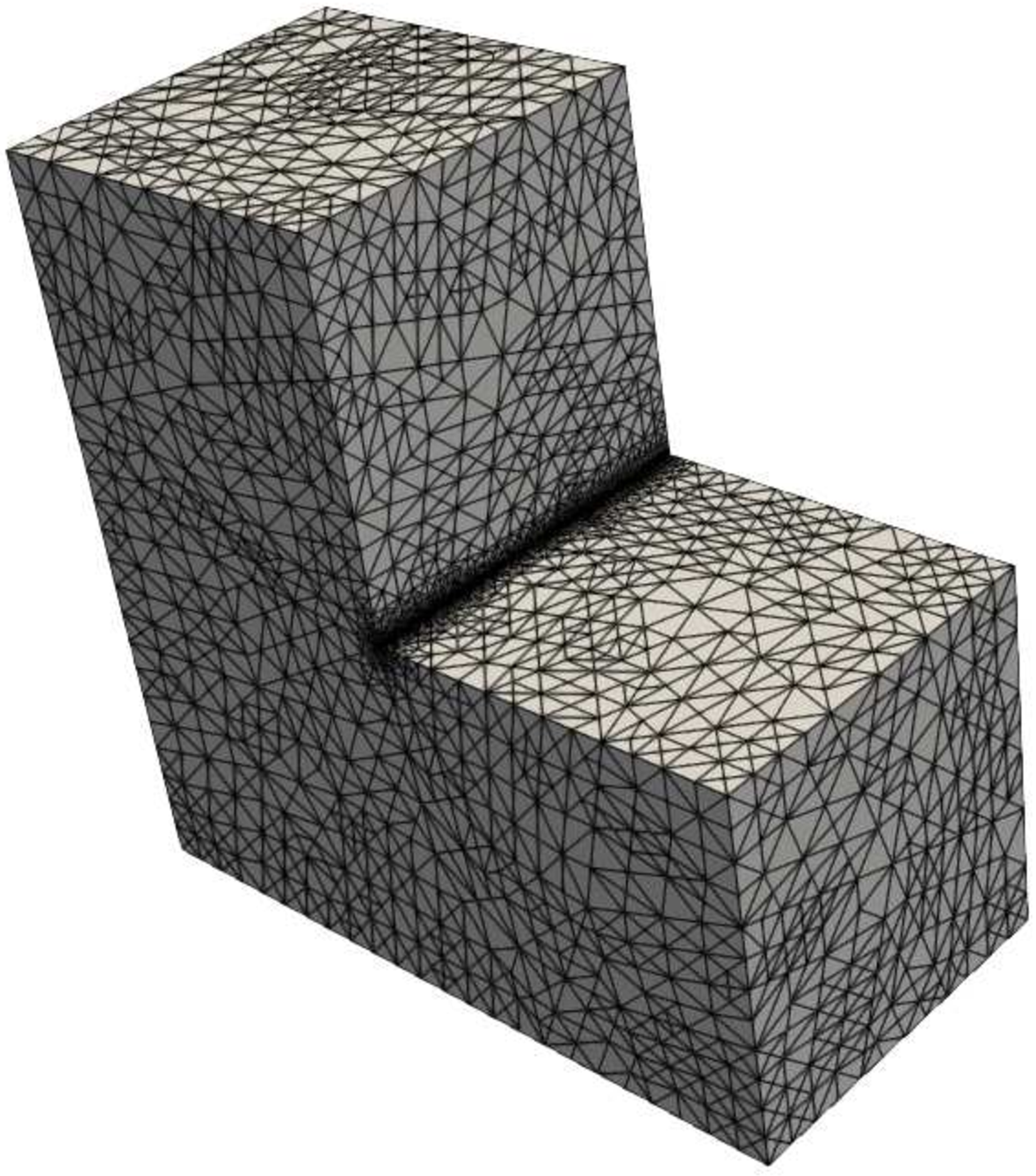}
	\end{minipage}\\
	\caption{Test 2. Intermediate adaptively refined meshes with $15408$ (left) and $263463$ (right) number of elements using the estimator 
$\mathcal{E}_{ocp,\T_{h}}$. }
	\label{fig:Lshape-mesh-estimador-theta}
\end{figure}

\subsection{Test 3. Discontinuous parameters and unknown solution}
This example is to further test the robustness of the adaptive algorithm in the case where discontinuous parameters are considered. 
More precisely,  we consider 
\begin{align*}
\chi(\boldsymbol{x}) = \begin{cases}
0.0001 & \text{if } \boldsymbol{x} \in \Omega_0, \\
1.0 &  \mbox{otherwise}
\end{cases}
\qquad
{\kappa}(\boldsymbol{x})
=\kappa_1(\boldsymbol{x})+\kappa_2(\boldsymbol{x}) 
=\boldsymbol{1_{\Omega_0}}+100\times\boldsymbol{1_{\Omega_1}}.
\end{align*}
Here, $\boldsymbol{1_{\Omega_0}}$, $\boldsymbol{1_{\Omega_1}}$ denote the characteristic functions of $\Omega_0,\Omega_1 \subset \Omega$ defined by
\begin{align*}
\Omega_0 := \left\{\boldsymbol{x}=(x,y,z)\in \Omega: 
\max\{|x-0.5|,|y-0.5|,|z-0.5|\}<	0.25\right\},
\end{align*} 
and $\Omega_1:=\overline{\Omega}_0^c\cap \Omega$, respectively; the computational domain is $\Omega:=(0,1)^3$. 
We choose as  data $\mathbf{a}=(0.1, 0.1)$, $\mathbf{b}=(100,100)$, $\alpha=1$, and 
\begin{align*}
\by_\Omega(\boldsymbol{x}) = (x^2\sin(\pi y)\sin(\pi z),
                 \sin(\pi x)\sin(\pi z),
		         \sin(\pi x)\sin(\pi y)),
\quad
\bbf(\boldsymbol{x}) = (1, 0, 0).
\end{align*}
In contrast to the previous examples, the solution of this problem cannot be described analytically. 
Moreover, due to the discontinuities of the parameters, a smooth solution cannot be expected and may exhibit pronounced singularities. 

Figure~\ref{fig:2_domains} illustrates the adaptive meshes generated by \textbf{Algorithm}~\ref{Algorithm}. 
Note that the adaptive refinement is concentrated on the boundary of $\Omega_0$, 
which is where the parameter discontinuity takes place.
In Figure~\ref{fig:2_domains_field} (left), we show the approximate solution on the finest adaptively refined mesh, where we observe that the solution primarily concentrates on $\Omega_0$ and its magnitude decreases outside this region.
In the absence of an exact solution, we employ the error estimators $\mathcal{E}_{st,\T_{h}}$ and $\mathcal{E}_{adj,\T_{h}}$ to evaluate the convergence of the adaptive method. 
Figure~\ref{fig:2_domains_field} (right) shows the convergence history for $\mathcal{E}_{st,\T_{h}}$ and $\mathcal{E}_{ad,\T_{h}}$, computed with uniform and adaptive refinement. 
From this figure we observe a convergence behavior of both estimators towards zero for increasing number of elements of the mesh. 
Notably, the adaptive method achieves significantly superior numerical performance. 
We also observe a lower order of convergence for the estimators compared to the previous example. 
This is expected due to the poor regularity and the non-smoothness detected in the solution. 

\begin{figure}[!h]
	\centering
	\begin{minipage}{0.45\linewidth}\centering
		\includegraphics[trim={0 0 0 0},clip,width=4.2cm,height=4.7cm,scale=0.8]{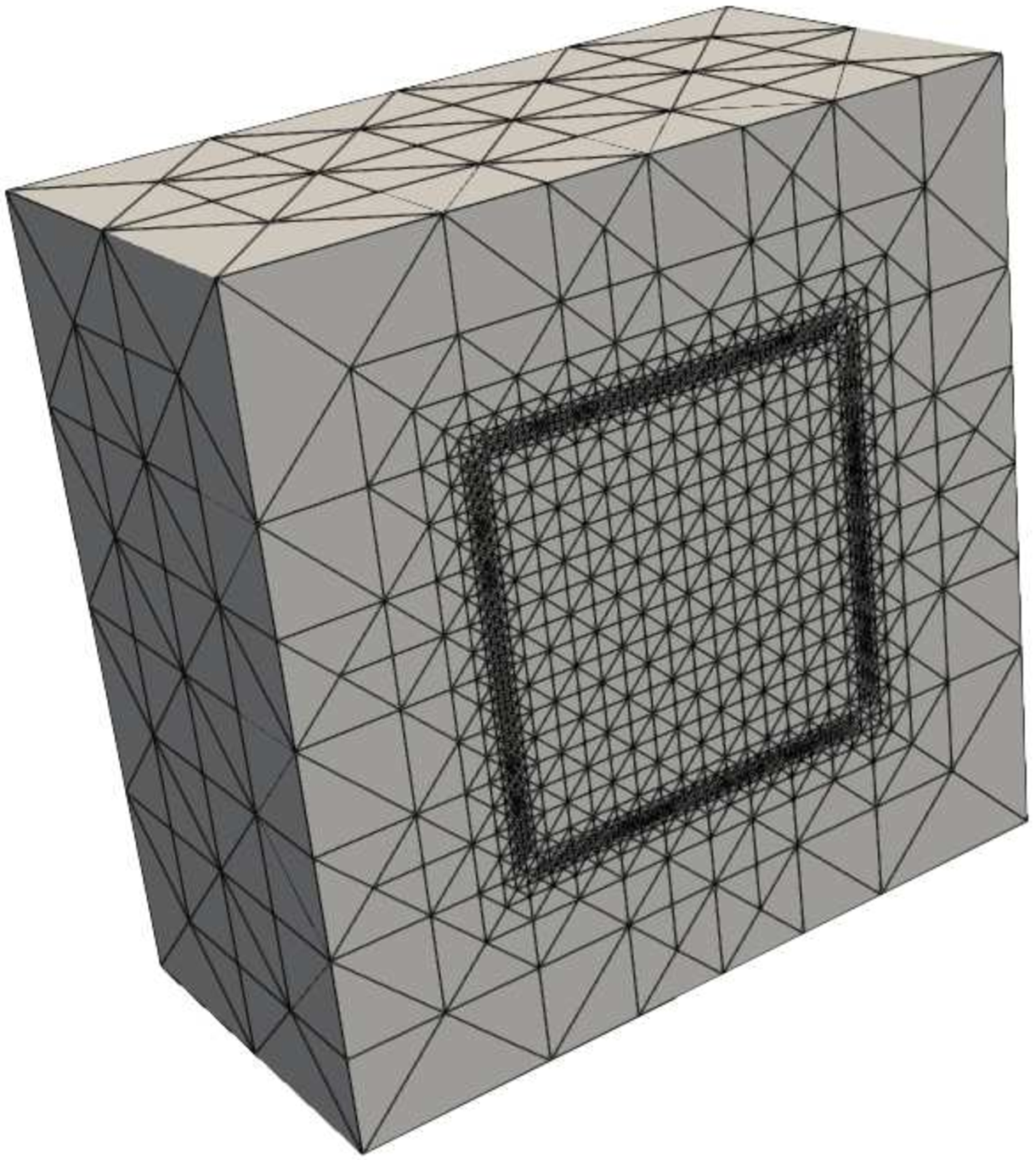}
	\end{minipage}
	\begin{minipage}{0.5\linewidth}\centering
		\includegraphics[trim={0 0 0 0},clip,width=4.95cm,height=4.7cm,scale=0.8]{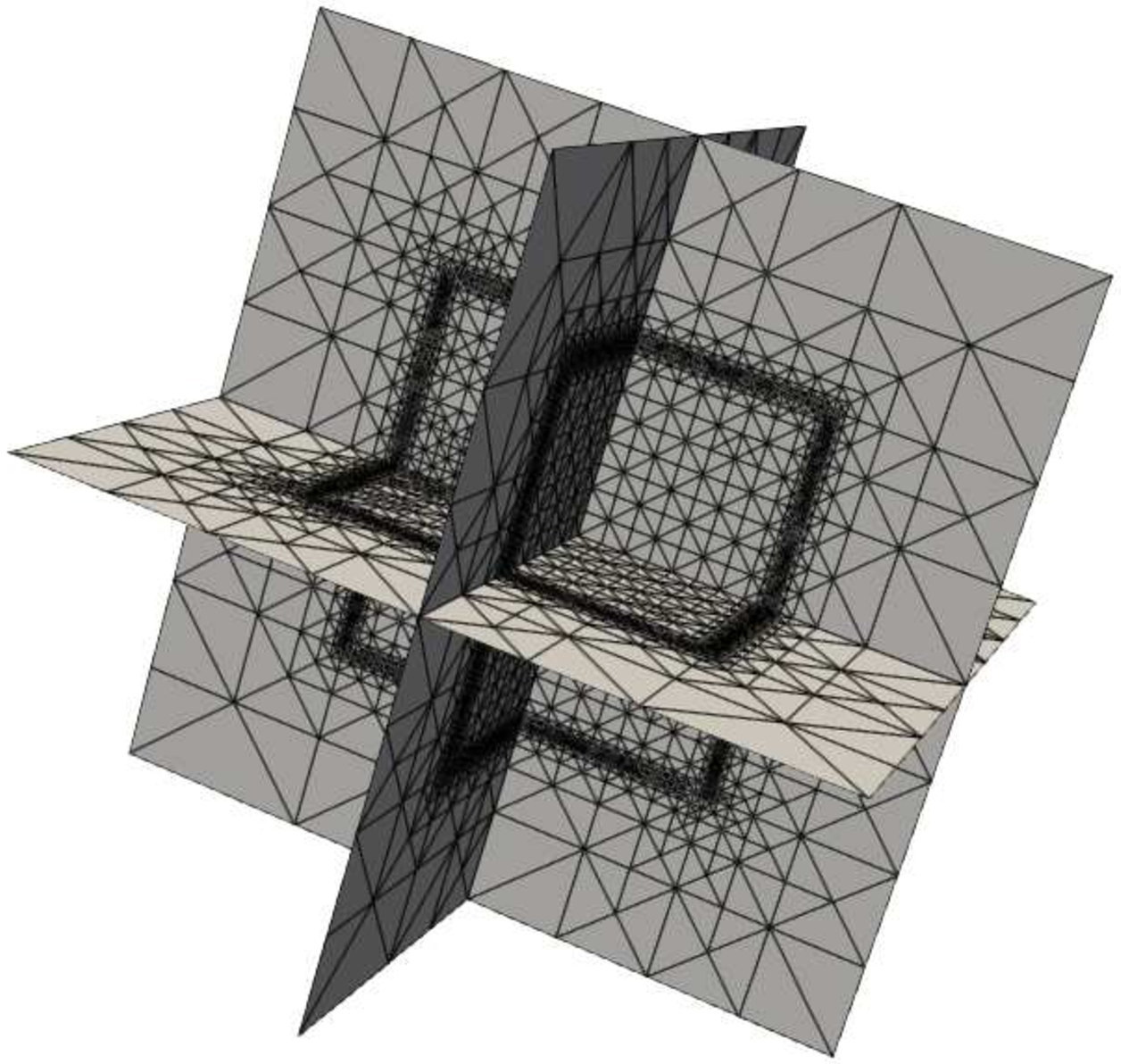}
	\end{minipage}\\
	\caption{Test 3. Adaptively refined mesh  with 1626796 number of elements and the corresponding cross sections of the mesh.}
	\label{fig:2_domains}
\end{figure}
\begin{figure}[!h]
	\centering
	\begin{minipage}{0.45\linewidth}\centering
		\includegraphics[trim={0 0 0 0},clip,width=5.0cm,height=4.7cm,scale=0.8]{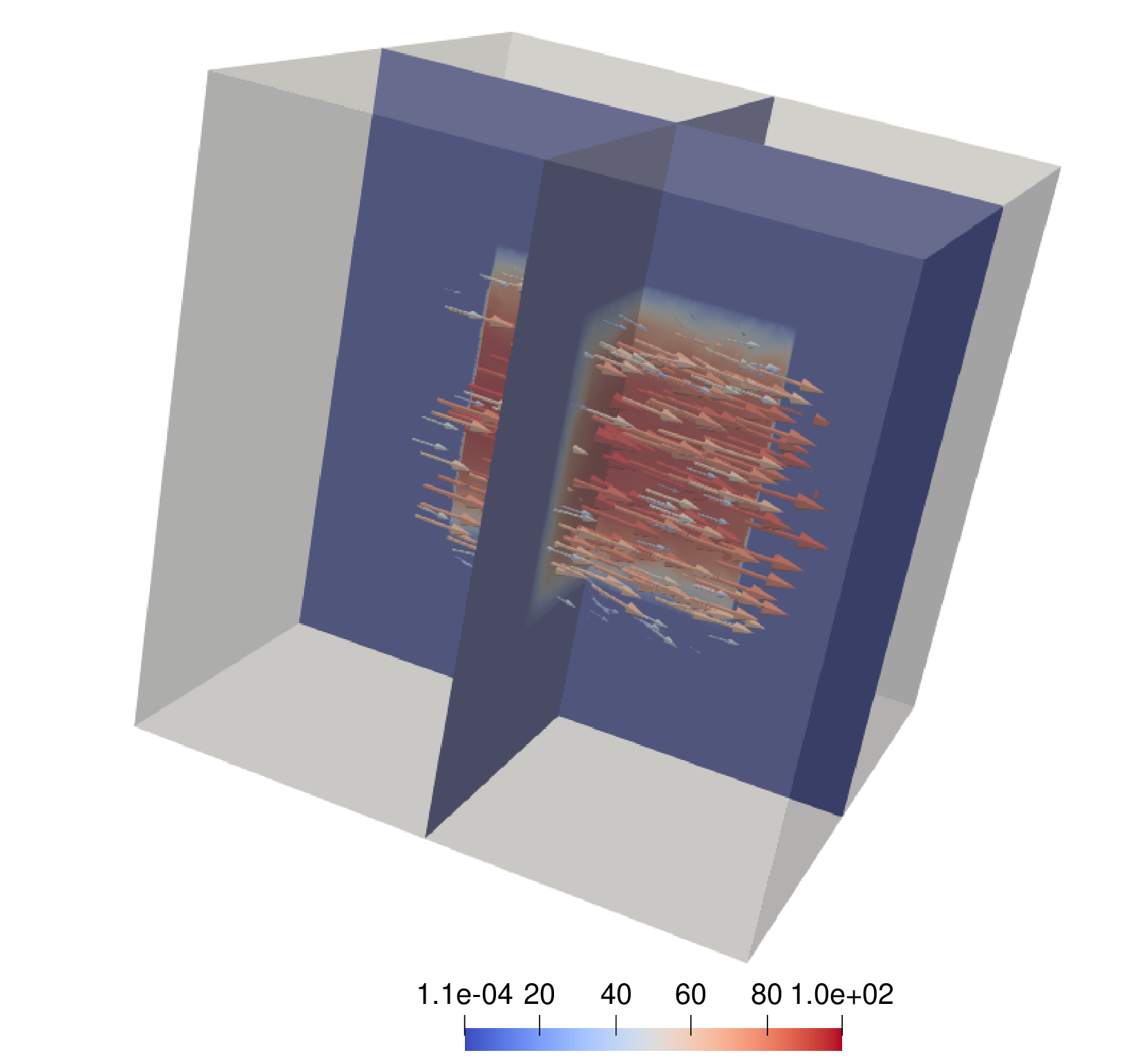}
	\end{minipage}
	\begin{minipage}{0.5\linewidth}\centering
		\includegraphics[trim={0 0 0 0},clip,width=5.5cm,height=4.1cm,scale=0.8]{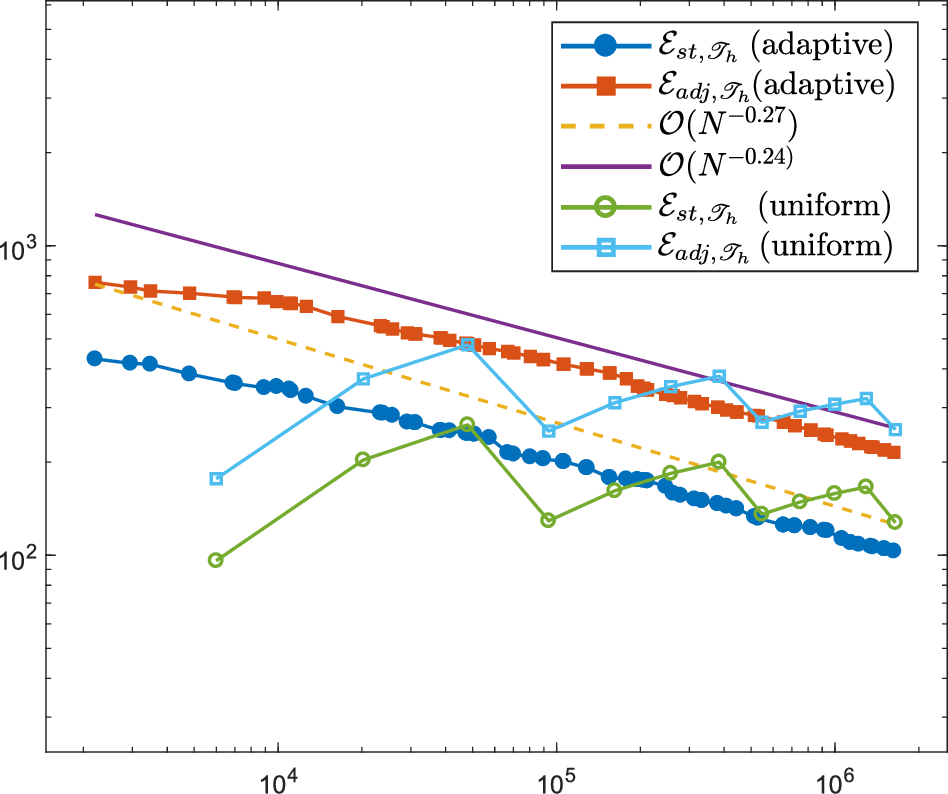}
	\end{minipage}\\
	\caption{Test 3. Left: Numerical solution $\by^{*}_h$ (magnitude and vector field) computed on an adaptively refined mesh with 1626796 number 
of elements.  Right: Comparison between the convergence of the estimators  
$\mathcal{E}_{st,\T_{h}}$ and  $\mathcal{E}_{ad,\T_{h}}$ with uniform and adaptive refinement.}
	\label{fig:2_domains_field}
\end{figure}

%
%

\bibliographystyle{siam}
\bibliography{Ref}

\end{document}